\DeclareMathAlphabet{\mathcal}{OMS}{cmsy}{m}{n}
\newcommand{\paren}[1]{\left(#1\right)}  
\newcommand{\brc}[1]{\left\{#1\right\}}  
\newcommand{\sbra}[1]{\left\lbrack#1\right\rbrack}  
\newcommand{\abs}[1]{\left|#1\right|}  
\newcommand{\norm}[1]{\left\lVert #1\right\rVert}  
\newcommand{\ceil}[1]{\left\lceil#1\right\rceil}  
\newcommand{\plchold}{\:\cdot\:} 
\newcommand{\RSQO}{\mbox{RSQO}}
\newcommand{\mani}[1][M]{\mathcal{#1}}  
\newcommand{\tanspc}[1][\point]{T_{#1}}  
\newcommand{\cotanspc}[1][\point]{T^{\ast}_{#1}}  
\newcommand{\metr}[3][\point]{\left\langle #2, #3 \right\rangle_{#1}}  
\newcommand{\relmiddle}[1]{\mathrel{}\middle#1\mathrel{}}  
\newcommand{\eucli}[1][]{\mathbb{R}^{#1}}  
\newcommand{\coordmetr}[1][\point]{\widehat{G}_{\widehat{#1}}}  
\newcommand{\partdiff}[2]{\frac{\partial #1}{\partial #2}} 
\definecolor{green}{rgb}{ .0, .8, .6}
\newcommand{\COMM}[2]{
    {\ifthenelse{\equal{#1}{MO}}{\color{red}}{
            \ifthenelse{\equal{#1}{AT}}{\color{blue}}{
                \ifthenelse{\equal{#1}{TO}}{\color{green}}
                }
            }
        [#2]
    }
}
\newcommand{\cond}{P} 
\newcommand{\fact}{F}
\newcommand{\casecom}{Case} 
\title{Sequential Quadratic Optimization for Nonlinear Optimization Problems on Riemannian Manifolds\thanks{submitted to the editors on September 29, 2020 and revised on June 15, 2021. This is the extended version of a paper submitted to a journal.
\funding{This work was supported by the Japan Society for the Promotion of Science KAKENHI under 17H01699, 19H04069, and 20K19748.}}}
\author{
    Mitsuaki Obara\thanks{Graduate School of Information Science and Technology, The University of Tokyo, Tokyo, Japan (\email{mitsuaki\_obara@mist.i.u-tokyo.ac.jp, takeda@mist.i.u-tokyo.ac.jp})}
    \and
    Takayuki Okuno\thanks{Center for Advanced Intelligence Project, RIKEN, Tokyo, Japan (\email{takayuki.okuno.ks@riken.jp, akiko.takeda@riken.jp})}
    \and
    Akiko Takeda\footnotemark[2] \footnotemark[3]
}
\begin{document}

\maketitle

\begin{abstract}
We consider optimization problems on Riemannian manifolds with equality and inequality constraints, which we call Riemannian nonlinear optimization (RNLO) problems. Although they have numerous applications, the existing studies on them are limited especially in terms of algorithms. In this paper, we propose Riemannian sequential quadratic optimization (RSQO) that uses a line-search technique with an $\ell_{1}$ penalty function as an extension of the standard SQO algorithm for constrained nonlinear optimization problems in Euclidean spaces to Riemannian manifolds. We prove its global convergence to a Karush-Kuhn-Tucker point of the RNLO problem by means of parallel transport and the exponential mapping. Furthermore, we establish its local quadratic convergence by analyzing the relationship between sequences generated by RSQO and the Riemannian Newton method. Ours is the first algorithm that has both global and local convergence properties for constrained nonlinear optimization on Riemannian manifolds. Empirical results show that RSQO finds solutions more stably and with higher accuracy compared with the existing Riemannian penalty and augmented Lagrangian methods.
\end{abstract}

\begin{keywords}
Riemannian manifolds, Riemannian optimization, Nonlinear optimization, Sequential quadratic optimization, $\ell_{1}$ penalty function
\end{keywords}

\begin{AMS}
 65K05, 90C30 
\end{AMS}

\section{Introduction}
In this paper, we consider the following problem:
\begin{align}
    \begin{split}
    \label{RNLO}
    \underset{x\in\mani}{\text{minimize}} \quad & f\paren{x} \\
    \text{subject to} \quad  & g_{i}\paren{x} \leq 0, \text{ for all } i \in \mathcal{I} \coloneqq \brc{1,\ldots,m}, \\ 
    & h_{j}\paren{x} = 0, \text{ for all } j \in \mathcal{E} \coloneqq \brc{1,\ldots,n},
    \end{split}
\end{align}
where $\mani$ is a $d$-dimensional Riemannian manifold and $f, \brc{g_{i}}_{i\in\mathcal{I}}$, and $\brc{h_{j}}_{j\in\mathcal{E}}$ are continuously differentiable
functions from $\mani$ to $\eucli$. Moreover, $\mani$ is assumed to be connected and complete. Throughout this paper, we call problem \eqref{RNLO} the Riemannian nonlinear optimization problem and abbreviate it as the RNLO problem. This problem is a natural extension of the standard constrained nonlinear optimization problem in a Euclidean space to a Riemannian manifold. Indeed, if $\mani=\eucli[d]$, \eqref{RNLO} reduces to the standard problem on $\eucli[d]$.

By virtue of its versatility, many applications of RNLO \eqref{RNLO} arise naturally in various fields such as machine learning and control theory. For instance, nonnegative low-rank matrix completion~\cite{SongNg20NonnegRankMatApprox} can be formulated as an optimization problem on a fixed-rank manifold with nonnegative inequality constraints. $k$-means clustering~\cite{Carsonetal17ManiOptforkmeans} can be represented as a problem on the Stiefel manifold with equality and inequality constraints. Robotic posture computations~\cite{Brossetteetal18SQPforRobotics} and nonnegative principal component analysis~\cite{ZassShashua06NNPCA} are also representative examples.

Optimization on Riemannian manifolds, called Riemannian optimization, has seen extensive development in the last few decades for unconstrained cases, namely, RNLO~\eqref{RNLO} with $\mathcal{I}=\emptyset$ and $\mathcal{E} = \emptyset$. Absil et al.~\cite{Absiletal08OptBook} laid out theories for algorithms such as the geometric Newton method and Riemannian trust-region method. On the basis of their work, unconstrained Riemannian optimization algorithms and their applications have advanced in various ways; see \cite{Huangetal18RBFGSforNonconv,Boumaletal18RiemGDandTRwithGlobRate,Bortolotietal18dampedNewton,SatoAihara19CholQRRetrGeneStiefelMani,ZhuSato20RiemCGwithInvRetr,Bonnabel13RiemSGD, Zhangetal16RiemSVRG,Zhangetal18RSPIDER}, for example. We also refer the reader to the latest book~\cite{Boumal20IntroRiemOptBook} by Boumal for an introduction to unconstrained Riemannian optimization and a comprehensive survey article~\cite{Huetal19ManSurvey} by Hu et al. for recent developments on Riemannian optimization.
Connections between unconstrained Riemannian optimization and nonlinear optimization methods in Euclidean spaces have been also investigated. For example, Absil et al.~\cite{Absiletal09ConnectionRiemNewtonFPSQP} showed that the Riemannian Newton method and feasibility perturbed SQO (FP-SQO)~\cite{WrightTenny04FeasibleTrustRegionSQP} produce the same iterates when we consider an equality-constrained optimization problem in a Euclidean space and the constraints define an embedded submanifold of the Euclidean space. 
For the same problem, Bai and Mei~\cite{BaiMei19AnalSQPthruLensofRiemOpt} analyzed a certain SQO in a Euclidean space called a first-order SQO therein by using Riemannian techniques and derived global and local convergence rates.

In contrast, studies on constrained nonlinear optimization problems on Riemannian manifolds are still very scarce. Yang et al.~\cite{Yangetal14OptCond} provided the Karush-Kuhn-Tucker (KKT) conditions and second-order necessary and sufficient conditions for RNLO~\eqref{RNLO}. Bergmann and Herzog~\cite{BergmannHerzog19IntKKT} extended constraint qualifications from Euclidean spaces to smooth manifolds. Liu and Boumal~\cite{LiuBoumal19Simple} developed an augmented Lagrangian method and an exact penalty method combined with smoothing techniques. They also showed the global convergence properties of the algorithms. As far as we know, they were the first to present algorithms for constrained Riemannian optimization problems of the \eqref{RNLO} form. Moreover, sequential quadratic optimization (SQO) or sequential quadratic programming (SQP) algorithms, which are our interest here, have been extended from Euclidean spaces to Riemannian manifolds in several ways, as explained below.

The SQO algorithm is one of the most effective algorithms for constrained nonlinear optimization in a Euclidean space. The strength of SQO is that it has both global and fast local convergence guarantees under certain assumptions~\cite{NocedalWright06NumOpt}. We refer readers to a survey article~\cite{BoggsTolle96SQPsurvey} by Boggs and Tolle for details on SQO in a Euclidean space. Below, we shall review the existing work on SQO on Riemannian manifolds. Schiela and Ortiz~\cite{SchielaOrtiz20SQPforECP} proposed an SQO method for problems on smooth Hilbert manifolds with only equality-constrained cases, i.e., RNLO \eqref{RNLO} with $\mathcal{I}=\emptyset$, and studied its local convergence property. Their algorithmic policy is to perform two steps, called normal and tangential steps, to improve feasibility and optimality. Brossette et al.~\cite{Brossetteetal18SQPforRobotics} proposed an SQO algorithm for problems having only inequality constraints, i.e., RNLO \eqref{RNLO} with $\mathcal{E}=\emptyset$. However, they did not theoretically examine the convergence of SQO and instead focused on its application to a problem in robotics.

It is worthwhile to note that as yet there is no SQO algorithm on Riemannian manifolds which is ensured to have global convergence to a point satisfying the KKT conditions, to the best of our knowledge. Moreover, there is no SQO algorithm for RNLO \eqref{RNLO} with $\mathcal{E}\neq \emptyset$ and $\mathcal{I}\neq \emptyset$. One may think that such an RNLO can be handled by the existing SQO methods mentioned above because inequality constraints $g_i(x)\le 0\ (i\in \mathcal{I})$ can be transformed into the equality constraints $g_i(x)+u_i^2=0\ (i\in \mathcal{I})$ by means of squared slack variables $u_i \in \mathbb{R}\ (i\in \mathcal{I})$, and moreover, equality constraints $h_j(x)=0\ (j\in \mathcal{E})$ can be expressed as the two inequalities $h_j(x)\le 0,\ h_j(x)\ge 0\ (j\in \mathcal{E})$. However, these manipulations may impair the solution of the problem. For example, the use of squared slack variables may increase the number of KKT points that do not satisfy the KKT conditions of the original problem. Moreover, by splitting equality constraints into two inequalities, the linear independence constraint qualification necessarily fails at any feasible point. From the above standpoint, it would be advantageous to have an algorithm that can directly solve RNLO \eqref{RNLO} with both inequality and equality constraints.

\subsection{Our contribution}
In this paper, we propose an SQO algorithm for RNLO \eqref{RNLO}. We will often call this algorithm a Riemannian SQO algorithm, or RSQO algorithm for short, while we call an SQO in a Euclidean space Euclidean SQO. Given an iterate, the proposed RSQO algorithm finds a search direction by solving a quadratic subproblem that is organized on a tangent space of the manifold $\mani$. Unlike Euclidean SQO, we make use of retraction, which is a concept specific to manifolds for determining the next iterate in $\mani$. Next, along the curve defined by the retraction, we further utilize the $\ell_1$ penalty function, which is presented in \cite{LiuBoumal19Simple} for RNLO~\eqref{RNLO}, as a merit function so as to compute an appropriate step length in accordance with a backtracking line search.
Note that, at every iteration, RSQO does not necessarily satisfy the whole constraints, $g_{i}\paren{x} \leq 0 \paren{i\in\mathcal{I}}$ and $h_{j}\paren{x} = 0 \paren{j \in \mathcal{E}}$, while satisfying $x\in\mani$ certainly.
Particularly when $\mani$ is the whole Euclidean space, the proposed RSQO reduces to the Han's Euclidean SQO~\cite{Han77GlobConvSQPwithExactPenalty} using the backtracking line search.

We will prove global convergence to a point satisfying the KKT conditions of RNLO~\eqref{RNLO}. We will also prove local quadratic convergence under certain assumptions 
by considering the relationship between sequences produced by the RSQO algorithm and the Riemannian Newton method~\cite[Chapter 6]{Absiletal08OptBook}.

Now, our contributions are summarized as follows:
\begin{enumerate}
    \item Our RSQO algorithm adequately handles both inequality and equality constraints. Moreover, it is the first one that ensures both global and local convergence for constrained nonlinear optimization on Riemannian manifolds. The previous Riemannian algorithms have no convergence guarantee or either global or local convergence properties, not both.
    \item We conduct numerical experiments clarifying that RSQO is very promising; it solved the problems in our experiments more stably and with 
    higher accuracy in comparison with the existing Riemannian penalty and augmented Lagrangian methods.
\end{enumerate}

\subsection{Organization of the paper}
The rest of this paper is organized as follows. In Section~\ref{preliminaries}, we review fundamental concepts from Riemannian geometry and Riemannian optimization. In Section~\ref{method}, we describe RSQO and analyze its global and local convergence properties.
In Section~\ref{experiment}, we provide numerical results on nonnegative low-rank matrix completion problems. 
We also compare our algorithm with the existing methods. 
In Section~\ref{conclusion}, we summarize our research and state future work.

\section{Preliminaries}\label{preliminaries}
\subsection{Notation and terminology from Riemannian geometry}
Let us briefly review some concepts from Riemannian geometry, following the notation of \cite{Absiletal08OptBook}. Let $x\in\mani$ and $\tanspc\mani$ be the tangent space to $\mani$ at $x$. A Riemannian manifold is a smooth manifold endowed with a smooth mapping $\metr[]{\cdot}{\cdot}:x\mapsto\metr{\cdot}{\cdot}$ such that $\metr{\cdot}{\cdot}:\tanspc\mani\times\tanspc\mani\rightarrow\eucli$ is an inner product called a Riemannian metric at $x$. The Riemannian metric induces the norm $\norm{\xi}_{x} \coloneqq \sqrt{\metr[x]{\xi}{\xi}}$ for $\xi\in \tanspc\mani$ and $\text{dist}\paren{\cdot,\cdot}:\mani\times\mani\rightarrow\eucli$, the Riemannian distance between two points. Let $\paren{\mathcal{U},\varphi}$ be a chart of $\mani$. Here, $\mathcal{U}\subseteq\mani$ is an open set and $\varphi: \mathcal{U} \rightarrow \varphi\paren{\mathcal{U}}\subseteq \eucli[d]$ is a homeomorphism. When $\mani = \eucli[d]$, $\mathcal{U}$ is any open ball in the usual sense and $\varphi$ equals the identity map. We will often omit the subscript $x$ when it is clear from the context. From \cite[Theorem 13.29]{Lee12IntrotoSmoManibook2ndedn}, $\mani$ is a metric space under the Riemannian distance. According to the Hopf-Rinow theorem (see e.g. O'Neil~\cite{OwithPrimeSymbolNeil83Semi}), every closed bounded subset of $\mani$ is compact for a finite-dimensional connected complete manifold by regarding $\mani$ as a metric space.

Let $V$ be a finite-dimensional vector space and $l:V\rightarrow\eucli[]$ be a continuous function. Here, we define the one-sided directional derivative at $p\in V$ along $v \in V$, denoted by $l^{\prime}\paren{p;v}$, as
\begin{align*}
	l^{\prime}\paren{p;v} \coloneqq \lim_{t\downarrow 0}\frac{l\paren{p+tv}-l\paren{p}}{t}
\end{align*}
if the limit exists. Given a sufficiently smooth function $\theta:\mani\rightarrow\eucli$, we denote by $\text{D}\theta\paren{x}\sbra{\xi}\in\eucli$ the differential of $\theta$ at $x\in \mani$ along $\xi\in \tanspc\mani$. Particularly when $\mani=V$, we have $\text{D}\theta\paren{x}\sbra{\xi} = \theta^{\prime}\paren{x;\xi}$ under $\tanspc[x]V\simeq V$, where $\tanspc[x]V\simeq V$ is the canonical identification. Throughout this paper, for a given vector space $V$ and $p\in V$, we write $\tanspc[p]V\simeq V$ when $\tanspc[p]V$ is canonically identified with $V$. For a precise definition of the differential on manifolds, see, e.g., Absil et al.~\cite{Absiletal08OptBook}. 

The gradient of $\theta$ at $x$, denoted by $\text{grad}\,\theta\paren{x}$, is defined as a unique element of $\tanspc\mani$ that satisfies
\begin{align}\label{def:grad}
    \metr{\text{grad}\,\theta\paren{x}}{\xi} = \text{D}\theta\paren{x}\sbra{\xi}, \quad \forall \xi \in \tanspc\mani.
\end{align}
Note that, for any $x\in\mani$, the above operator grad is $\eucli$-linear: for any continuously differentiable functions $\theta_{1},\theta_{2}:\mani\rightarrow\eucli$ and all $a,b\in\eucli$,  $\text{grad}\paren{a\theta_{1}+b\theta_{2}}\paren{x} = a \, \text{grad}\,\theta_{1}\paren{x} + b \, \text{grad}\,\theta_{2}\paren{x}$ holds. We will use the hat symbol to represent the corresponding counterparts in $\eucli[d]$, called coordinate expressions, of the objects related to $\mani$ or $\tanspc[x]\mani$: for any chart $\paren{\mathcal{U},\varphi}$ containing $x$, we write 
\begin{align*}
    \widehat{x} \coloneqq \varphi\paren{x}, \  \widehat{\xi} \coloneqq \text{D}\varphi\paren{x}\sbra{\xi}, \text{ and } \widehat{\theta}\coloneqq \theta\circ \varphi^{-1}
\end{align*}
for any $\xi\in\tanspc\mani$ and $\theta:\mani\rightarrow\eucli$. Note that $\text{D}\theta\paren{x}\sbra{\xi} = {\text{D}\widehat{\theta}\paren{\widehat{x}}}^{\top}\widehat{\xi}$, where $\text{D}\widehat{\theta}\paren{\widehat{x}}$ is the standard gradient of $\widehat{\theta}$ in $\eucli[d]$; i.e., $\text{D}\widehat{\theta}\paren{\widehat{x}}$ is a $d$-dimensional vector whose $i$-th element is $\frac{\partial \hat{\theta}(\hat{x})}{\partial e_i}\in\eucli$. We denote by $\coordmetr$ the coordinate expression of the Riemannian metric at $\widehat{x}$ under the chart. Here, $\coordmetr$ is a positive-definite matrix of size $d$ whose $\paren{i,j}$-th element is $\metr[x]{\partdiff{}{e_{i}}}{\partdiff{}{e_{j}}}$, where $\partdiff{}{e_{i}}$ and $\partdiff{}{e_{j}}$ denote the $i$-th and $j$-th bases of $\tanspc\mani$. When $\mani = \eucli[d]$, we can choose the canonical scalar product as a Riemannian metric. Let $\tanspc[]\mani \coloneqq \cup_{x\in\mani}\tanspc\mani$ be the tangent bundle. A retraction is a smooth mapping $R:\tanspc[]\mani\rightarrow\mani$ with the following properties: let $R_{x}$ denote the restriction of $R$ to $x$. Then, it holds that
\begin{align}
    \begin{split}\label{def:retr}
        &R_{x}\paren{0_{x}} = x, \\
        &\text{D}R_{x}\paren{0_{x}} = \text{id}_{\tanspc\mani},\text{ under }\tanspc[0_{x}]\paren{\tanspc\mani} \simeq \tanspc\mani, 
    \end{split}
\end{align}
where $0_{x}$ is the zero element of $\tanspc\mani$ and $\text{id}_{\tanspc\mani}$ denotes the identity mapping on $\tanspc\mani$. Note that, when $\mani=\eucli[d]$, $R_{x}\paren{\xi} = x + \xi$ is one of the retractions under $\tanspc[x]\eucli[d]\simeq\eucli[d]$, for example. The Riemannian Hessian operator at $x$ of $\theta:\mani\rightarrow\eucli$ is the linear mapping $\text{Hess}\,\theta\paren{x}$ from $\tanspc\mani$ to itself, defined by $\text{Hess}\,\theta\paren{x}\sbra{\xi_{x}} \coloneqq \nabla_{\xi_{x}}\text{grad}\,\theta$ for all $\xi_{x} \in \tanspc\mani$, where $\nabla$ is the Levi-Civita connection on $\mani$, the unique symmetric connection compatible with the Riemannian metric. Note that, for any $x\in\mani$, the operator Hess is $\eucli$-linear: for any twice continuously differentiable functions $\theta_{1},\theta_{2}:\mani\rightarrow\eucli$ and all $a,b\in\eucli$, $\text{Hess}\paren{a\theta_{1}+b\theta_{2}}\paren{x} = a \, \text{Hess}\,\theta_{1}\paren{x} + b \, \text{Hess}\,\theta_{2}\paren{x}$ holds. For $i,j,\ell = 1,\ldots,d$, let the real-valued function $\Gamma_{ij}^{\ell}:\mathcal{U}\rightarrow\eucli[]$ be the Christoffel symbol associated with the Levi-Civita connection (or thus the Riemannian metric) and the chart. Note that $\Gamma^{\ell}_{ij}$ is symmetric with respect to $i$ and $j$, i.e., $\Gamma^{\ell}_{ij} = \Gamma^{\ell}_{ji}$ for each $i,j,\ell=1,\ldots,d$. Particularly when $\mani=\eucli[d]$, $\Gamma^{\ell}_{ij}=0$ for all $i,j,\ell=1,\ldots,d$. Using the Christoffel symbols, we obtain the coordinate expression of the inner product involving the Riemannian Hessian operator as follows: for all $\xi, \eta \in \tanspc[x]\mani$, 
\begin{align}\label{Hesscoordmetr}
    &\metr[]{\text{Hess}\,\theta\paren{x}\sbra{\xi}}{\eta} = 
    {\widehat{\xi}}^{\top}\paren{\text{D}^{2}\widehat{\theta}\paren{\widehat{x}} - \widehat{\Gamma}_{\widehat{x}}\sbra{\text{D}\widehat{\theta}\paren{\widehat{x}}}}
    \widehat{\eta},
\end{align}
where 
\begin{align*}
    \widehat{\Gamma}_{\widehat{x}}\sbra{\text{D}\widehat{\theta}\paren{\widehat{x}}} \coloneqq
    \begin{pmatrix}
    \sum_{\ell=1}^{d}\Gamma_{11}^{\ell}\paren{\widehat{x}} \frac{\partial \widehat{\theta}\paren{\widehat{x}}}{\partial e_{\ell}}  & \cdots & \sum_{\ell=1}^{d}\Gamma_{1d}^{\ell}\paren{\widehat{x}} \frac{\partial \widehat{\theta}\paren{\widehat{x}}}{\partial e_{\ell}}  \\
    \vdots & \ddots & \vdots \\
    \sum_{\ell=1}^{d}\Gamma_{d1}^{\ell}\paren{\widehat{x}} \frac{\partial \widehat{\theta}\paren{\widehat{x}}}{\partial e_{\ell}} & \cdots & \sum_{\ell=1}^{d}\Gamma_{dd}^{\ell}\paren{\widehat{x}} \frac{\partial \widehat{\theta}\paren{\widehat{x}}}{\partial e_{\ell}} \\
    \end{pmatrix}
\end{align*}
and $\text{D}^{2}\widehat{\theta}\paren{\widehat{x}}$ is the Hessian matrix of $\widehat{\theta}$ at $\widehat{x}$ in the Euclidean sense, whose $\paren{i,j}$-th element is $\frac{\partial^{2} \widehat{\theta}\paren{\widehat{x}}}{\partial e_{j} \partial e_{i}}\in\eucli$. Note that $\widehat{\Gamma}_{\widehat{x}}\sbra{\text{D}\widehat{\theta}\paren{\widehat{x}}}$ is symmetric by the symmetry of the Christoffel symbols.

\subsection{Optimality conditions for RNLO\label{Pre:RiemOpt}}
We define $\mathcal{L}\paren{x,\mu,\lambda} \coloneqq f\paren{x} \allowbreak  + \allowbreak \sum_{i\in \mathcal{I}} \mu_{i}g_{i}\paren{x} \allowbreak + \allowbreak \sum_{j\in\mathcal{E}}\lambda_{j}h_{j}\paren{x}$ for $x\in \mani$, $\mu\in \eucli[m]$, and $\lambda\in \eucli[n]$. The function $\mathcal{L}$ is called the Lagrangian of RNLO~\eqref{RNLO} and $\mu\in\eucli[m]$ and $\lambda\in\eucli[n]$ are called Lagrange multipliers for the inequality and equality constraints, respectively. For given $\mu\in \eucli[m]$ and $\lambda\in \eucli[n]$, we will often write $\mathcal{L}_{\mu,\lambda}\paren{x} \coloneqq \mathcal{L}\paren{x,\mu,\lambda}$ for $x\in \mani$. Let $\Omega$ denote the set of feasible points of RNLO \eqref{RNLO}. For $x\in\Omega$, let $\mathcal{I}_{a}\paren{x}$ denote the index set that corresponds to the active inequality constraints at $x\in\Omega$, that is, $\mathcal{I}_{a}\paren{x} \coloneqq \brc{i\in\mathcal{I}\relmiddle{|}g_{i}\paren{x}=0}$.

\begin{definition} {\upshape (\cite[eq. (4.3)]{Yangetal14OptCond})}
    We say that the linear independence constraint qualification (LICQ) holds at $x\in\Omega$ if
    \begin{align*}
        \brc{\text{\upshape grad}\,g_{i}\paren{x}, \text{\upshape  grad}\,h_{j}\paren{x}}_{i\in\mathcal{I}_{a}\paren{x},j\in\mathcal{E}}\text{ are linearly independent in } \tanspc\mani.
    \end{align*}
\end{definition}

\begin{definition} {\upshape (\cite[eq. (4.8)]{Yangetal14OptCond})} 
    We say that $x^{\ast}\in\Omega$ satisfies the Karush-Kuhn-Tucker conditions (KKT conditions) of RNLO \eqref{RNLO} if there exist Lagrange multipliers $\mu^{\ast}\in\eucli[m]$ and $\lambda^{\ast}\in\eucli[n]$ such that the following hold:
    \begin{subequations}\label{KKT}
	    \begin{align}
	        &\text{\upshape grad}\,f\paren{x^{\ast}} + \sum_{i\in\mathcal{I}} \mu_{i}^{\ast} \text{\upshape grad}\,g_{i}\paren{x^{\ast}} + \sum_{j\in\mathcal{E}}\lambda_{j}^{\ast} \text{\upshape grad}\,h_{j}\paren{x^{\ast}} = 0,\label{KKTLag} \\
	        &\mu^{\ast}_i\geq 0, \, g_{i}\paren{x^{\ast}}\leq 0, \text{ and} \label{KKTineq}\\
	        &\mu^{\ast}_{i}g_{i}\paren{x^{\ast}}=0,\text{ for all } i\in\mathcal{I},\label{KKTcompl}\\
	        &h_{j}\paren{x^{\ast}} = 0,\text{ for all } j\in\mathcal{E}.\label{KKTeq}
	    \end{align}
    \end{subequations}
    We call $x^{\ast}$ a KKT point of RNLO \eqref{RNLO} and refer to $\paren{x^{\ast}, \mu^{\ast}, \lambda^{\ast}}$ as a KKT triplet of RNLO \eqref{RNLO}.
\end{definition}

\begin{proposition} {\upshape (\cite[Theorem 4.1]{Yangetal14OptCond})}
    Suppose that $x^{\ast}\in\Omega$ is a local minimum of RNLO~\eqref{RNLO} and that the LICQ holds at $x^{\ast}$. Then, $x^{\ast}$ satisfies the KKT conditions.
\end{proposition}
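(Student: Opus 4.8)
The plan is to reduce the Riemannian statement to the classical Euclidean KKT theorem by pulling the problem back through a chart around $x^{\ast}$. First I would fix a chart $\paren{\mathcal{U},\varphi}$ with $x^{\ast}\in\mathcal{U}$ and set $\widehat{x}^{\ast}\coloneqq\varphi\paren{x^{\ast}}$, together with the coordinate functions $\widehat{f}=f\circ\varphi^{-1}$, $\widehat{g}_{i}=g_{i}\circ\varphi^{-1}$, and $\widehat{h}_{j}=h_{j}\circ\varphi^{-1}$, all defined on the open set $\varphi\paren{\mathcal{U}}\subseteq\eucli[d]$. Because $\varphi$ is a homeomorphism it carries a neighborhood of $x^{\ast}$ to a neighborhood of $\widehat{x}^{\ast}$ and preserves constraint values (e.g.\ $g_{i}\paren{x}=\widehat{g}_{i}\paren{\widehat{x}}$), so $\widehat{x}^{\ast}$ is a local minimizer of the Euclidean problem $\text{minimize } \widehat{f}\paren{\widehat{x}}$ subject to $\widehat{g}_{i}\paren{\widehat{x}}\leq 0\ \paren{i\in\mathcal{I}}$, $\widehat{h}_{j}\paren{\widehat{x}}=0\ \paren{j\in\mathcal{E}}$ over $\varphi\paren{\mathcal{U}}$. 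The feasibility, sign, and complementarity parts of the KKT conditions, namely \eqref{KKTineq}, \eqref{KKTcompl}, and \eqref{KKTeq}, are statements about scalar function values and the multipliers $\mu^{\ast},\lambda^{\ast}$, so they transfer verbatim between the two formulations; only the stationarity condition \eqref{KKTLag} requires care.

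The crux is to match the constraint qualification and the stationarity equation across the two pictures using the metric. From the defining relation \eqref{def:grad}, together with the coordinate identity $\text{D}\theta\paren{x}\sbra{\xi}={\text{D}\widehat{\theta}\paren{\widehat{x}}}^{\top}\widehat{\xi}$ and the fact that the definition of $\coordmetr$ yields $\metr{\eta}{\xi}=\widehat{\eta}^{\top}\coordmetr\,\widehat{\xi}$ for all $\xi,\eta\in\tanspc\mani$, I would derive that the coordinate expression of the Riemannian gradient is $\widehat{\text{grad}\,\theta\paren{x}}=\coordmetr^{-1}\,\text{D}\widehat{\theta}\paren{\widehat{x}}$, where $\coordmetr$ is positive definite and hence invertible. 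Since the map $\xi\mapsto\widehat{\xi}=\text{D}\varphi\paren{x^{\ast}}\sbra{\xi}$ is a linear isomorphism of $\tanspc[x^{\ast}]\mani$ onto $\eucli[d]$ and $\coordmetr[x^{\ast}]^{-1}$ is invertible, the family $\brc{\text{grad}\,g_{i}\paren{x^{\ast}},\text{grad}\,h_{j}\paren{x^{\ast}}}_{i\in\mathcal{I}_{a}\paren{x^{\ast}},j\in\mathcal{E}}$ is linearly independent in $\tanspc[x^{\ast}]\mani$ if and only if $\brc{\text{D}\widehat{g}_{i}\paren{\widehat{x}^{\ast}},\text{D}\widehat{h}_{j}\paren{\widehat{x}^{\ast}}}$ is linearly independent in $\eucli[d]$; that is, the Riemannian LICQ at $x^{\ast}$ is exactly the Euclidean LICQ for the coordinate problem at $\widehat{x}^{\ast}$.

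With these correspondences in hand, I would invoke the classical Euclidean KKT theorem (see e.g.~\cite{NocedalWright06NumOpt}) for the coordinate problem: the local minimizer $\widehat{x}^{\ast}$ together with the now-verified Euclidean LICQ yields multipliers $\mu^{\ast}\in\eucli[m]$, $\lambda^{\ast}\in\eucli[n]$ satisfying $\text{D}\widehat{f}\paren{\widehat{x}^{\ast}}+\sum_{i\in\mathcal{I}}\mu_{i}^{\ast}\,\text{D}\widehat{g}_{i}\paren{\widehat{x}^{\ast}}+\sum_{j\in\mathcal{E}}\lambda_{j}^{\ast}\,\text{D}\widehat{h}_{j}\paren{\widehat{x}^{\ast}}=0$ along with the sign, complementarity, and feasibility conditions. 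Multiplying this identity on the left by $\coordmetr[x^{\ast}]^{-1}$ turns each coordinate gradient into the coordinate expression of the corresponding Riemannian gradient, and since coordinatization is an isomorphism this is equivalent to $\text{grad}\,f\paren{x^{\ast}}+\sum_{i\in\mathcal{I}}\mu_{i}^{\ast}\,\text{grad}\,g_{i}\paren{x^{\ast}}+\sum_{j\in\mathcal{E}}\lambda_{j}^{\ast}\,\text{grad}\,h_{j}\paren{x^{\ast}}=0$ in $\tanspc[x^{\ast}]\mani$, i.e.\ \eqref{KKTLag}; here the $\eucli$-linearity of grad noted after \eqref{def:grad} is what lets the multipliers pass through. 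Combined with the transferred conditions \eqref{KKTineq}--\eqref{KKTeq}, this gives the full KKT conditions of RNLO~\eqref{RNLO}. The only genuinely delicate point, and the step I would be most careful about, is the gradient/LICQ translation through the non-identity metric matrix $\coordmetr[x^{\ast}]$; everything else reduces to the Euclidean theorem, which already encodes the hard analytic work (the implicit-function and feasible-curve construction behind Farkas' lemma).
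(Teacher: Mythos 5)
Your proof is correct, but there is nothing internal to compare it against: the paper does not prove this proposition at all, importing it verbatim from Yang et al.~\cite{Yangetal14OptCond} (their Theorem 4.1). Judged on its own, your chart-based reduction is sound and complete. The transfer of feasibility, sign, and complementarity \eqref{KKTineq}--\eqref{KKTeq} is immediate because $\varphi$ preserves function values; the identity $\widehat{\text{grad}\,\theta\paren{x}}=\coordmetr^{-1}\text{D}\widehat{\theta}\paren{\widehat{x}}$ follows correctly from \eqref{def:grad} together with $\metr{\eta}{\xi}=\widehat{\eta}^{\top}\coordmetr\widehat{\xi}$; and since $\coordmetr[x^{\ast}]$ is symmetric positive-definite and $\text{D}\varphi\paren{x^{\ast}}$ is a linear isomorphism of $\tanspc[x^{\ast}]\mani$ onto $\eucli[d]$, both the equivalence of the Riemannian and Euclidean LICQ and the back-translation of the stationarity equation into \eqref{KKTLag} go through exactly as you claim. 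The only point I would ask you to state explicitly is that the classical KKT theorem is being applied to a problem posed on the open set $\varphi\paren{\mathcal{U}}$ rather than on all of $\eucli[d]$; this is harmless because local optimality and LICQ are local notions, but it deserves a sentence. By way of comparison with the cited source: the usual route in the Riemannian literature, including the development in Yang et al., works intrinsically in the tangent space --- composing with curves or the exponential map $\text{Exp}_{x^{\ast}}$, for which $\text{D}\,\text{Exp}_{x^{\ast}}\paren{0_{x^{\ast}}}=\text{id}_{\tanspc[x^{\ast}]\mani}$, so that the gradient of $\theta\circ\text{Exp}_{x^{\ast}}$ at $0_{x^{\ast}}$ with respect to $\metr[x^{\ast}]{\cdot}{\cdot}$ \emph{is} $\text{grad}\,\theta\paren{x^{\ast}}$ and a Farkas-type argument then runs directly in $\tanspc[x^{\ast}]\mani$, with no metric matrix ever appearing. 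Your version trades that cleanliness for flexibility (any chart works, no completeness or injectivity-radius considerations enter) at the cost of the $\coordmetr[x^{\ast}]^{-1}$ bookkeeping, which you identify as the delicate step and handle correctly.
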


\begin{definition} {\upshape (\cite[Theorem 4.3]{Yangetal14OptCond})}
    We say that a feasible point $x^{\ast}\in\Omega$ satisfies the second-order sufficient conditions (SOSCs) if the KKT conditions hold at $x^{\ast}$ with associated Lagrange multipliers $\mu^{\ast}$ and $\lambda^{\ast}$, and
    \begin{align*}
        \metr[x^{\ast}]{\text{\rm Hess}\,\mathcal{L}_{\mu^{\ast},\lambda^{\ast}}\paren{x^{\ast}}\sbra{\xi}}{\xi}>0,\quad  \forall\xi\in\mathcal{F}\paren{x^{\ast},\mu^{\ast},\lambda^{\ast}}\backslash\brc{0},
    \end{align*}
    where
	\begin{align*}
		\mathcal{F}\paren{x^{\ast}, \mu^{\ast}, \lambda^{\ast}}
		\coloneqq \brc{\xi\in\tanspc[x^{\ast}]\mani \relmiddle{|}
		\begin{aligned}
			&\metr[]{\xi}{\text{\rm grad}\,h_{j}\paren{x^{\ast}}} = 0, \text{ for all } j\in\mathcal{E},\\
			&\metr[]{\xi}{\text{\rm grad}\,g_{i}\paren{x^{\ast}}} = 0, \text{ for all } i\in\mathcal{I}_{a}\paren{x^{\ast}}\text{ with }\mu^{\ast}_{i}>0,\\
			&\metr[]{\xi}{\text{\rm grad}\,g_{i}\paren{x^{\ast}}} \leq 0, \text { for all } i\in\mathcal{I}_{a}\paren{x^{\ast}}\text{ with }\mu^{\ast}_{i}=0.\\
		\end{aligned}
		}.
	\end{align*}
\end{definition}
 
\begin{definition}
    Given a point $x^{\ast}\in\Omega$ that satisfies the KKT conditions with associated Lagrange multipliers $\mu^{\ast}$ and $\lambda^{\ast}$, we say that the strict complementary condition (SC) holds if exactly one of $\mu^{\ast}_{i}$ and $g_{i}\paren{x^{\ast}}$ is zero for each index $i\in\mathcal{I}$. Hence, under the SC, we have $\mu^{\ast}_i>0$ for each $i\in\mathcal{I}_{a}\paren{x^{\ast}}$.
\end{definition}

\section{Sequential quadratic optimization on a Riemannian manifold}\label{method}
\subsection{Description of proposed algorithm}
Sequential quadratic optimization (SQO), or sequential quadratic programming (SQP), is a well-known iterative method for constrained nonlinear optimization in a Euclidean space~\cite{NocedalWright06NumOpt, BoggsTolle96SQPsurvey}. In this section, we extend it to a Riemannian manifold $\mani$ and analyze its global and local convergence properties. The proposed algorithm is referred to as Riemannian SQO, or RSQO for short. In contrast, we will often refer to SQO methods in a Euclidean space as Euclidean SQO methods.

Let $x_{k}\in\mani$ be a current iterate. In RSQO, we solve the following subproblem at $x_k$ to have a search direction\footnote{Although we adopt the exact solution of \eqref{QP} as the search direction in this paper, practically it can be hard to calculate the exact one depending on the problem size and an algorithm for solving the subproblem. In Section~\ref{conclusion}, we consider the issue with Euclidean SQO methods that take the inexactness into account.}:
\begin{align}
    \begin{split}\label{QP}
        \underset{\Delta x_{k} \in \tanspc[x_{k}]\mani}{\text{minimize}} \quad & \frac{1}{2} \metr[]{B_{k}\sbra{\Delta x_{k}}}{\Delta x_{k}}  + \metr[]{\text{grad}\,f\paren{x_{k}}}{\Delta x_{k}} \\
        \text{subject to}\quad  & g_{i}\paren{x_{k}} + \metr[]{\text{grad}\,g_{i}\paren{x_{k}}}{\Delta x_{k}} \leq 0, \text{ for all }i \in \mathcal{I},\\ 
        & h_{j}\paren{x_{k}} + \metr[]{\text{grad}\,h_{j}\paren{x_{k}}}{\Delta x_{k}} = 0, \text{ for all } j \in \mathcal{E},
    \end{split}
\end{align}
where $B_{k}: \tanspc[x_{k}]\mani\rightarrow\tanspc[x_{k}]\mani$ is a linear operator that is assumed to be symmetric and positive-definite, that is,
\begin{align*}
    &\text{symmetry: }\metr[]{B_{k}\sbra{\xi}}{\zeta} = \metr[]{\xi}{B_{k}\sbra{\zeta}},\quad \forall \xi,\zeta \in \tanspc[x_{k}]\mani, \\
    &\text{positive-definiteness: }\metr[]{B_{k}\sbra{\xi}}{\xi} > 0,\quad \forall \xi \in \tanspc[x_{k}]\mani\backslash \brc{0_{x_{k}}}.
\end{align*}
    
In the subproblem, the constraints in \eqref{QP} are linearizations of the original ones at $x_{k}$. As for the objective function, in principle, we can set any linear and symmetric operator to $B_{k}$, for example, the identity operator on $\tanspc[x_{k}]\mani$, as long as Assumption~A\ref{Bkbounded} that will appear later is satisfied. Yet, the Hessian of the Lagrangian or its approximation is preferable for the sake of rapid convergence. By virtue of the positive-definiteness of $B_k$, \eqref{QP} is strongly convex and thus has a unique optimum, say $\Delta x_{k}^{\ast}$, if it is feasible. In terms of the coordinate expression, we can transform the subproblem into a certain quadratic optimization problem on $\eucli[d]$ such that the optimum is $\widehat{\Delta x_{k}^{\ast}}\in \eucli[d]$, the coordinate expression of $\Delta x_{k}^{\ast}$, which can be computed using existing algorithms such as an interior-point method~\cite{NocedalWright06NumOpt}. See Section~\ref{subsection:experimental_environment} for a more specific manner of organizing the subproblem. Since the constraints of \eqref{QP} are formed by affine functions defined on $\tanspc[x_{k}]\mani$, the KKT conditions hold at the optimum $\Delta x_{k}^{\ast}$ in the absence of constraint qualifications. Hence, we ensure that the equality and inequality constraints of \eqref{QP} have Lagrange multiplier vectors $\lambda_k^{\ast}$ and $\mu_k^{\ast}$, respectively, which compose the KKT conditions for \eqref{QP}.

$\RSQO$ employs the optimum $\Delta x_{k}^{\ast}$ as the search direction. In the ordinary Euclidean SQO method equipped with a line-search technique, the next iterate $x_{k+1}$ is defined by $x_k+\alpha_k\Delta x_k^{\ast}$ with an appropriate step length $\alpha_k>0$. However, in our Riemannian setting, $x_{k+1}\in \mani$ cannot be generated in this way because the sum operation $\paren{x_{k}, \Delta x^{\ast}_{k}} \mapsto x_k+\alpha_k\Delta x_k^{\ast}$ is not generally defined between the different spaces $\mani$ and $\tanspc[x_k]\mani$. To circumvent this difficulty, we utilize a retraction $R$ and set $x_{k+1}=R_{x_k}(\alpha_k\Delta x_k^{\ast})$. For the definition of $R$, see \eqref{def:retr}.

Next, we explain how the step length $\alpha_{k}$ is computed. Similar to the Euclidean SQO method, we make use of the following $\ell_1$ penalty function defined on $\mani$ as a merit function, which was first introduced together with the Riemannian penalty methods by Liu and Boumal~\cite{LiuBoumal19Simple}:
\begin{align}\label{ell1merit}
    P_{\rho_k}\paren{x} \coloneqq f\paren{x} + \rho_{k} \paren{\sum_{i\in\mathcal{I}} \max\brc{0,g_{i}\paren{x}} + \sum_{j\in\mathcal{E}}\abs{h_{j}\paren{x}}},
\end{align}
where $\rho_k>0$ is a penalty parameter. The parameter $\rho_k$ is determined from the previous one $\rho_{k-1}$ and the Lagrange multiplier vectors $\lambda_{k}^{\ast}$ and $\mu_{k}^{\ast}$ obtained by solving subproblem~\eqref{QP}. Specifically, we set 
\begin{align}\label{updaterho}
    \rho_k \coloneqq
    \begin{cases}
    \rho_{k-1}, &\mbox{if {$\rho_{k-1} \geq \upsilon_k$}},\\
    \upsilon_{k} + \varepsilon, &\mbox{otherwise} 
    \end{cases}
\end{align}
with $\upsilon_{k}:=\max\brc{\max_{i\in\mathcal{I}}\mu_{ki}^{\ast},\max_{j\in\mathcal{E}}\abs{\lambda_{kj}^{\ast}}}$ and $\varepsilon>0$ being a prescribed algorithmic parameter. The step length $\alpha_k$ is then determined in accordance with a backtracking line search 
using the composite function $P_{\rho_k}\circ R_{x_{k}}\paren{\cdot}$ along with $\Delta x_k^{\ast}$: we find the smallest nonnegative integer $r$ such that
\begin{align}\label{Armijorule}
    \gamma \beta^{r}\metr[]{B_{k}\sbra{\Delta x_{k}^{\ast}}}{\Delta x_{k}^{\ast}} \leq P_{\rho_{k}} \paren{x_{k}} - P_{\rho_{k}}\circ R_{x_{k}}\paren{\beta^{r} \Delta x_{k}^{\ast}}
\end{align}
and set $\alpha_k=\beta^r$. Procedure~\eqref{Armijorule} is well-defined in the sense that we can always find $r$ within a finite number of trials, as is verified in Remark~\ref{armijowelldefined}. In addition, the Lagrange multipliers are updated by $(\lambda_{k+1},\mu_{k+1})=(\lambda_{k}^{\ast},\mu_{k}^{\ast})$. Algorithm~\ref{SQPonmani} formally states the procedure of $\RSQO$.

\begin{algorithm}[t]
    \caption{Riemannian sequential quadratic optimization (RSQO)} 
    \label{SQPonmani}
    \begin{algorithmic}          
        \REQUIRE Riemannian manifold $\mani$, Riemannian metric$\metr[]{\cdot}{\cdot}$, twice continuously differentiable functions $f,\brc{g_{i}}_{i\in \mathcal{I}},\brc{h_{j}}_{j\in \mathcal{E}}:\mathcal{M}\rightarrow\eucli$, merit function $P_{\rho}:\mani\rightarrow\eucli$, retraction $R: \tanspc[]\mani\rightarrow\mani$, $\varepsilon>0$, $\rho_{-1}>0$, $\beta\in\paren{0,1}$, $\gamma\in\paren{0,1}$.
        \ENSURE Initial iterate $x_{0} \in \mani$, initial linear operator $B_{0}:\tanspc[x_{0}]\mani\rightarrow\tanspc[x_{0}]\mani$.
        \FOR  {$k=0,1,\ldots$}
        \STATE Compute $\Delta x_{k}^{\ast}$ -- a solution to \eqref{QP} with Lagrange multipliers $\mu_{k}^{\ast}$ and $\lambda_{k}^{\ast}$;
        \STATE Update $\rho_{k}$ according to \eqref{updaterho};
        \STATE Determine the integer $r$ according to the backtracking line search
        \eqref{Armijorule} and set $\alpha_{k} = \beta^{r}$;
        \STATE Update $x_{k+1} = R_{x_{k}}\paren{\alpha_{k}\Delta x_{k}^{\ast}}$, $\mu_{k+1} = \mu_{k}^{\ast}$, and $\lambda_{k+1} = \lambda_{k}^{\ast}$;
        \STATE Set $B_{k+1}:\tanspc[x_{k+1}]\mani\rightarrow\tanspc[x_{k+1}]\mani$;
        \ENDFOR
    \end{algorithmic}
\end{algorithm}

In fact, under the Euclidean setting with $\mani\coloneqq\eucli[d]$ and $R$ being defined with a straight line, the proposed RSQO becomes a basic SQO or SQP, found in many textbooks, e.g., \cite{Connetal00trustregionbook}. Accordingly, the lines of the convergence analyses of the RSQO
are analogous to those of Euclidean SQO: as for the global convergence analysis, we prove that $\Delta x^{\ast}_{k}$ is a descent direction of the merit function as well as Han~\cite{Han77GlobConvSQPwithExactPenalty} did in the Euclidean setting. The local convergence analysis is also conducted in a fashion similar to \cite{BoggsTolle96SQPsurvey, NocedalWright06NumOpt} for the Euclidean SQO. Nevertheless, the theoretical results we will establish are nontrivial because of difficulties peculiar to the Riemanian setting. Especially, we introduce new terminologies from Riemannian geometry, such as parallel transport, along with several tools from nonsmooth optimization so as to prove a certain inequality; see Proposition~\ref{LimBIneqLimP} for the detail.

Let us end this subsection by presenting a simpler form of the KKT conditions for subproblem \eqref{QP}.
\begin{lemma}
    The KKT conditions \eqref{KKTLag} -- \eqref{KKTeq} for subproblem \eqref{QP} at $\Delta x^{\ast}_{k} \in \tanspc[x_{k}]\mani$ are equivalent to the following conditions with $\mu_{k}^{\ast}\in\eucli[m]$ and $\lambda_{k}^{\ast}\in\eucli[n]$:
    \begin{subequations}\label{subprobKKT}
        \begin{align}
            &B_{k}\sbra{\Delta x^{\ast}_{k}} + \text{\upshape grad}\,f\paren{x_{k}} + \sum_{i\in\mathcal{I}} \mu_{ki}^{\ast} \text{\upshape grad}\,g_{i}\paren{x_{k}} + \sum_{j\in\mathcal{E}}\lambda_{kj}^{\ast} \text{\upshape grad}\,h_{j}\paren{x_{k}} = 0, \label{subprobKKTLagsimple}\\
            &\mu^{\ast}_{ki}\geq 0, \, g_{i}\paren{x_{k}} + \metr[]{\text{\upshape grad}\,g_{i}\paren{x_{k}}}{\Delta x^{\ast}_{k}}\leq 0,\text{ and}\label{subprobKKTineq}\\
            &\mu^{\ast}_{ki}\paren{g_{i}\paren{x_{k}} + \metr[]{\text{\upshape grad}\,g_{i}\paren{x_{k}}}{\Delta x^{\ast}_{k}}} = 0, \text{ for all } i\in\mathcal{I},\label{subprobKKTcompl}\\
            & h_{j}\paren{x_{k}} + \metr[]{\text{\upshape grad}\,h_{j}\paren{x_{k}}}{\Delta x^{\ast}_{k}} = 0, \text{ for all } j\in\mathcal{E},\label{subprobKKTeq}
        \end{align}
    \end{subequations}
\end{lemma}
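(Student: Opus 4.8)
The plan is to regard subproblem \eqref{QP} as itself an instance of the RNLO problem \eqref{RNLO} posed on the $d$-dimensional Riemannian manifold $\tanspc[x_{k}]\mani$, equipped with the flat metric $\metr[]{\cdot}{\cdot}$ it already carries and the canonical identification $\tanspc[\xi]\paren{\tanspc[x_{k}]\mani}\simeq\tanspc[x_{k}]\mani$ for every $\xi\in\tanspc[x_{k}]\mani$. Under this viewpoint the objective and the constraint functions of \eqref{QP} are the smooth maps $q,\tilde g_{i},\tilde h_{j}:\tanspc[x_{k}]\mani\rightarrow\eucli$ given by $q\paren{\xi}\coloneqq\frac12\metr[]{B_{k}\sbra{\xi}}{\xi}+\metr[]{\text{grad}\,f\paren{x_{k}}}{\xi}$, $\tilde g_{i}\paren{\xi}\coloneqq g_{i}\paren{x_{k}}+\metr[]{\text{grad}\,g_{i}\paren{x_{k}}}{\xi}$, and $\tilde h_{j}\paren{\xi}\coloneqq h_{j}\paren{x_{k}}+\metr[]{\text{grad}\,h_{j}\paren{x_{k}}}{\xi}$. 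The KKT conditions \eqref{KKTLag}--\eqref{KKTeq} for \eqref{QP} are then exactly the KKT conditions of this RNLO, so it suffices to rewrite them for $q,\tilde g_{i},\tilde h_{j}$ by computing the corresponding Riemannian gradients at $\Delta x_{k}^{\ast}$ via the defining relation \eqref{def:grad}.

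The only nonroutine computation is the gradient of $q$. First I would differentiate: for any $\xi,\eta\in\tanspc[x_{k}]\mani$ one has $\text{D}q\paren{\xi}\sbra{\eta}=\frac12\paren{\metr[]{B_{k}\sbra{\xi}}{\eta}+\metr[]{B_{k}\sbra{\eta}}{\xi}}+\metr[]{\text{grad}\,f\paren{x_{k}}}{\eta}$, where the assumed symmetry of $B_{k}$ collapses the two quadratic terms into $\metr[]{B_{k}\sbra{\xi}}{\eta}$, so the factor $\tfrac12$ disappears. Comparing with \eqref{def:grad} and using the $\eucli$-linearity of grad yields $\text{grad}\,q\paren{\xi}=B_{k}\sbra{\xi}+\text{grad}\,f\paren{x_{k}}$, which is affine in $\xi$. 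For the constraint functions, $\tilde g_{i}$ and $\tilde h_{j}$ have constant linear parts, so $\text{D}\tilde g_{i}\paren{\xi}\sbra{\eta}=\metr[]{\text{grad}\,g_{i}\paren{x_{k}}}{\eta}$ and $\text{D}\tilde h_{j}\paren{\xi}\sbra{\eta}=\metr[]{\text{grad}\,h_{j}\paren{x_{k}}}{\eta}$ for all $\xi,\eta$, whence $\text{grad}\,\tilde g_{i}\paren{\xi}=\text{grad}\,g_{i}\paren{x_{k}}$ and $\text{grad}\,\tilde h_{j}\paren{\xi}=\text{grad}\,h_{j}\paren{x_{k}}$, independently of $\xi$.

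Substituting these gradients into the stationarity condition \eqref{KKTLag} evaluated at $\Delta x_{k}^{\ast}$ gives precisely $B_{k}\sbra{\Delta x_{k}^{\ast}}+\text{grad}\,f\paren{x_{k}}+\sum_{i\in\mathcal{I}}\mu_{ki}^{\ast}\,\text{grad}\,g_{i}\paren{x_{k}}+\sum_{j\in\mathcal{E}}\lambda_{kj}^{\ast}\,\text{grad}\,h_{j}\paren{x_{k}}=0$, i.e.\ \eqref{subprobKKTLagsimple}. The remaining conditions transfer once $\tilde g_{i}$ and $\tilde h_{j}$ are written out: \eqref{KKTineq} becomes dual feasibility $\mu_{ki}^{\ast}\ge0$ together with $\tilde g_{i}\paren{\Delta x_{k}^{\ast}}\le0$, i.e.\ \eqref{subprobKKTineq}; \eqref{KKTcompl} becomes the complementarity \eqref{subprobKKTcompl}; and \eqref{KKTeq} becomes \eqref{subprobKKTeq}. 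Each rewriting is reversible, so the two systems are equivalent. I expect the only point requiring care to be the invocation of the symmetry of $B_{k}$ in differentiating the quadratic term, together with the bookkeeping of the identification $\tanspc[\xi]\paren{\tanspc[x_{k}]\mani}\simeq\tanspc[x_{k}]\mani$; everything else is a direct substitution.
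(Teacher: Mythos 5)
Your proposal is correct, and its logical skeleton matches the paper's: conditions \eqref{subprobKKTineq}--\eqref{subprobKKTeq} transfer verbatim from \eqref{KKTineq}--\eqref{KKTeq}, and the entire content lies in identifying the Riemannian gradient of the subproblem's objective at $\Delta x^{\ast}_{k}$, with the symmetry of $B_{k}$ as the crux in both arguments. Where you genuinely diverge is in how that gradient is computed. You work intrinsically: expanding $q\paren{\xi+t\eta}$ by bilinearity gives $\text{D}q\paren{\xi}\sbra{\eta}=\frac{1}{2}\paren{\metr[]{B_{k}\sbra{\xi}}{\eta}+\metr[]{B_{k}\sbra{\eta}}{\xi}}+\metr[]{\text{grad}\,f\paren{x_{k}}}{\eta}$, self-adjointness collapses the quadratic terms, and \eqref{def:grad} applied on the manifold $\tanspc[x_{k}]\mani$ (with the metric at every point taken to be $\metr[x_{k}]{\cdot}{\cdot}$ itself, under $\tanspc[\xi]\paren{\tanspc[x_{k}]\mani}\simeq\tanspc[x_{k}]\mani$) yields $\text{grad}\,q\paren{\xi}=B_{k}\sbra{\xi}+\text{grad}\,f\paren{x_{k}}$. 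The paper instead passes to coordinate expressions: it writes $\widehat{\langle B_{k}\rangle}\paren{\widehat{\Delta x_{k}}}=\widehat{\Delta x_{k}}^{\top}\coordmetr[x_{k}]\widehat{B_{k}}\widehat{\Delta x_{k}}$ with $\widehat{B_{k}}\coloneqq\text{D}\varphi\paren{x_{k}}\circ B_{k}\circ\paren{\text{D}\varphi\paren{x_{k}}}^{-1}$, differentiates this matrix quadratic form, and invokes the identity $\widehat{B_{k}}^{\top}\coordmetr[x_{k}]=\coordmetr[x_{k}]\widehat{B_{k}}$, which is the coordinate form of the same symmetry; it also separates the linear part into a function $F_{k}$ whose differential is identified with itself, exactly as your treatment of $\tilde g_{i}$ and $\tilde h_{j}$ does. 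Your route is shorter and chart-free, which is legitimate here because $\tanspc[x_{k}]\mani$ is a fixed inner-product space and the directional derivative of a quadratic form is elementary; the paper's coordinate detour buys the explicit matrix representation of the operator-metric interplay, which is essentially the translation later used to set up the Euclidean form of the subproblem in Section~\ref{subsection:experimental_environment}. No gap: every step of your rewriting is an identity, so the claimed equivalence is indeed reversible.
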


\begin{proof}
    Conditions \eqref{subprobKKTineq}, \eqref{subprobKKTcompl}, and \eqref{subprobKKTeq} directly follow from \eqref{KKTineq}, \eqref{KKTcompl}, and \eqref{KKTeq}, respectively. As for \eqref{subprobKKTLagsimple}, we will start by describing the original form \eqref{KKTLag} of subproblem \eqref{QP} by noting the translations $\metr[]{\text{grad}f\paren{x_{k}}}{\Delta x^{\ast}_{k}} = \text{D}f\paren{x_{k}}\sbra{\Delta x^{\ast}_{k}}$, $\metr[]{\text{grad}g_{i}\paren{x_{k}}}{\Delta x^{\ast}_{k}} = \text{D}g_{i}\paren{x_{k}}\sbra{\Delta x^{\ast}_{k}}$ for $i\in\mathcal{I}$, and $\metr[]{\text{grad}h_{j}\paren{x_{k}}}{\Delta x^{\ast}_{k}} = \text{D}h_{j}\paren{x_{k}}\sbra{\Delta x^{\ast}_{k}}$ for $j\in\mathcal{E}$ from \eqref{def:grad}:
    \begin{align}\label{subprobKKTLagorgn}
        &\text{grad}\paren{\frac{1}{2} \langle B_{k}\rangle + \text{D}f\paren{x_{k}} + \sum_{i\in\mathcal{I}}\mu^{\ast}_{ki}\text{D}g_{i}\paren{x_{k}} + \sum_{j\in\mathcal{E}} \lambda_{kj}^{\ast}\text{D}h_{j}\paren{x_{k}}}\paren{\Delta x_{k}^{\ast}} = 0,
    \end{align}
    where $\langle B_{k}\rangle\paren{\Delta x_{k}}\coloneqq \metr[]{B_{k}\sbra{\Delta x_{k}}}{\Delta x_{k}}$ for $\Delta x_{k}\in \tanspc[x_k]\mani$. Define $F_{k}:\tanspc[x_{k}]\mani\rightarrow\eucli[]$ by
    \begin{align*}
        F_{k}(\Delta x_k) \coloneqq \text{D}f\paren{x_{k}}\sbra{\Delta x_{k}} + \sum_{i\in\mathcal{I}}\mu^{\ast}_{ki}\text{D}g_{i}\paren{x_{k}}\sbra{\Delta x_{k}} + \sum_{j\in\mathcal{E}} \lambda^{\ast}_{kj}\text{D}h_{j}\paren{x_{k}}\sbra{\Delta x_{k}}
    \end{align*}
    for $\Delta x_k\in \tanspc[x_k]\mani$. By taking the inner product on the tangent space and using \eqref{def:grad} with $\paren{\theta, \mani, x}$ replaced by $\paren{\frac{1}{2}\langle B_{k}\rangle + F_{k}, \, \tanspc[x_{k}]\mani, \, \Delta x^{\ast}_{k}}$, \eqref{subprobKKTLagorgn} is equivalent to 
    \begin{align}\label{ToSimplesubKKTLag}
        \frac{1}{2}\text{D}\langle B_{k}\rangle\paren{\Delta x_{k}^{\ast}}\sbra{\plchold} \allowbreak + \text{D}F_{k}\paren{\Delta x_{k}^{\ast}}\sbra{\plchold} = 0,
    \end{align} 
    where the left-hand side is a mapping from $\tanspc[\Delta x^{\ast}_{k}]\paren{\tanspc[x_{k}]\mani}$ to $\eucli[]$. Since $F_{k}$ is linear, we can identify $\text{D}F_{k}\paren{\Delta x_{k}^{\ast}}$ with $F_{k}$ under $\tanspc[\Delta x^{\ast}_{k}]\paren{\tanspc[x_{k}]\mani} \simeq \tanspc[x_{k}]\mani$. Moreover, it follows that $\frac{1}{2}\text{D}\langle B_{k}\rangle\paren{\Delta x_{k}^{\ast}}\sbra{\plchold} \allowbreak = \metr[]{B_{k}\sbra{\Delta x^{\ast}_{k}}}{\plchold}$ under $\tanspc[\Delta x^{\ast}_{k}]\paren{\tanspc[x_{k}]\mani} \simeq \tanspc[x_{k}]\mani$, which is proved as follows: we actually have
    \begin{align*}
        \widehat{\langle B_{k}\rangle}\paren{\widehat{\Delta x_{k}}}
        &=\widehat{\Delta x_{k}}^{\top}\coordmetr[x_{k}]\paren{ \text{D}\varphi\paren{x_{k}}\sbra{B_{k}\sbra{\Delta x_{k}}}}\\
        &=\widehat{\Delta x_{k}}^{\top}\coordmetr[x_{k}]\paren{ \text{D}\varphi\paren{x_{k}}\circ B_{k}\circ \paren{\text{D}\varphi\paren{x_{k}}}^{-1} \circ \text{D}\varphi\paren{x_{k}}\sbra{\Delta x_{k}} }\\
        &= \widehat{\Delta x_{k}}^{\top}\coordmetr[x_{k}]\widehat{B_{k}}\widehat{\Delta x_{k}},
    \end{align*}
    where $\widehat{B_{k}}\coloneqq \text{D}\varphi\paren{x_{k}}\circ B_{k}\circ \paren{\text{D}\varphi\paren{x_{k}}}^{-1}$. Note that $\widehat{B_{k}}$ is a linear operator from $\eucli[d]$ to $\eucli[d]$, that is, a $d\times d$ matrix. Thus under $\tanspc[\Delta x^{\ast}_{k}]\paren{\tanspc[x_{k}]\mani} \simeq \tanspc[x_{k}]\mani$, we have
    \begin{align*}
        \frac{1}{2}\text{D}\langle B_{k}\rangle\paren{\Delta x^{\ast}_{k}}\sbra{\xi}
        &= \frac{1}{2}\text{D}\widehat{\langle B_{k}\rangle}\paren{\widehat{\Delta x^{\ast}_{k}}}^{\top}\widehat{\xi}\\
        &= \frac{1}{2} \widehat{\Delta x^{\ast}_{k}}^{\top}\paren{\widehat{B_{k}}^{\top}\coordmetr[x_{k}]+\coordmetr[x_{k}]\widehat{B_{k}}}\widehat{\xi}\\
        &= \widehat{\Delta x^{\ast}_{k}}^{\top}\widehat{B_{k}}^{\top}\coordmetr[x_{k}]\widehat{\xi}\\
        &=\metr[]{B_{k}\sbra{\Delta x^{\ast}_{k}}}{\xi}
    \end{align*}
    for any $\xi\in\tanspc[x_{k}]\mani$, where the third equality derives from the fact that ${\widehat{B_{k}}}^{\top}\coordmetr[x_{k}] = \coordmetr[x_{k}]\widehat{B_{k}}$, since, by the symmetry of the linear operator $B_{k}$, we obtain ${\widehat{\xi}}^{\top}\widehat{B_{k}}^{\top}\coordmetr[x_{k}]\widehat{\zeta} = \widehat{\xi}^{\top}\coordmetr[x_{k}]\widehat{B_{k}}\widehat{\zeta}$ for any $\widehat{\xi},\widehat{\zeta}\in\eucli[d]$. 
    
    Hence, combining the above with \eqref{ToSimplesubKKTLag} yields $\metr[]{B_{k}\sbra{\Delta x^{\ast}_{k}}}{\plchold} + F_{k}\sbra{\cdot} = 0$. Thus, we have $\metr[]{B_{k}\sbra{\Delta x^{\ast}_{k}}}{\plchold} + \text{D}f\paren{x_{k}}\sbra{\plchold} + \sum_{i\in\mathcal{I}}\mu^{\ast}_{ki}\text{D}g_{i}\paren{x_{k}}\sbra{\plchold} + \sum_{j\in\mathcal{E}} \lambda^{\ast}_{kj}\text{D}h_{j}\paren{x_{k}}\sbra{\plchold} = 0$, which is equivalent to condition \eqref{subprobKKTLagsimple} by recalling \eqref{def:grad}.
\end{proof}

\subsection{Global convergence}\label{subsec:globalconv}
In this subsection, we prove that $\RSQO$ has the global convergence property under the following assumptions:
\begin{enumerate}[\bf{A}1]
    \item Subproblem \eqref{QP} is feasible at every iteration. \label{subprobfeasi}
    \item There exist $m>0$ and $M>0$ such that, for any $k$, $m \norm{\xi}^{2} \leq \metr[]{B_{k}\sbra{\xi}}{\xi} \leq M \norm{\xi}^{2}$ holds for all $\xi \in \tanspc[x_{k}]\mani$.\label{Bkbounded}
    \item The generated sequence $\brc{\paren{x_{k},\mu_{k},\lambda_{k}}}$ is bounded.\label{genebound}
\end{enumerate}
As for Assumption A\ref{subprobfeasi}, the following sufficient condition holds.
\begin{proposition}\label{prop:SuffCondforSubProbFeasi}
    Suppose that RNLO \eqref{RNLO} has a feasible solution $\bar{x}\in\Omega$, $g_{i}$ is a geodesically convex function for all $i\in\mathcal{I}$, and $h_{j}$ is a geodesically linear function for all $j\in\mathcal{E}$. Then, Assumption A\ref{subprobfeasi} holds.
\end{proposition}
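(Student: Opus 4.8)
The plan is to exhibit, at each iterate $x_{k}$, an explicit feasible point of subproblem~\eqref{QP} built from the given feasible solution $\bar{x}\in\Omega$, thereby establishing Assumption~A\ref{subprobfeasi}. The idea is that linearizing the constraints at $x_{k}$ and evaluating them along the geodesic from $x_{k}$ to $\bar{x}$ turns the geodesic convexity of the $g_{i}$ into the desired inequalities and the geodesic linearity of the $h_{j}$ into the desired equalities.

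First I would invoke completeness. Since $\mani$ is connected and complete, the Hopf-Rinow theorem (already cited in Section~\ref{preliminaries}) guarantees that the exponential map $\mathrm{Exp}_{x_{k}}:\tanspc[x_{k}]\mani\rightarrow\mani$ is surjective, so there exists $\xi_{k}\in\tanspc[x_{k}]\mani$ with $\mathrm{Exp}_{x_{k}}\paren{\xi_{k}}=\bar{x}$. Let $\gamma\paren{t}\coloneqq\mathrm{Exp}_{x_{k}}\paren{t\xi_{k}}$ be the geodesic joining $x_{k}=\gamma\paren{0}$ to $\bar{x}=\gamma\paren{1}$, so that $\gamma^{\prime}\paren{0}=\xi_{k}$. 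I claim that $\Delta x_{k}=\xi_{k}$ is feasible for~\eqref{QP}. By the chain rule together with the definition~\eqref{def:grad} of the gradient, for every $i\in\mathcal{I}$ and $j\in\mathcal{E}$ one has $\frac{d}{dt}\big|_{t=0}\paren{g_{i}\circ\gamma}\paren{t}=\text{D}g_{i}\paren{x_{k}}\sbra{\xi_{k}}=\metr[]{\text{grad}\,g_{i}\paren{x_{k}}}{\xi_{k}}$ and likewise $\frac{d}{dt}\big|_{t=0}\paren{h_{j}\circ\gamma}\paren{t}=\metr[]{\text{grad}\,h_{j}\paren{x_{k}}}{\xi_{k}}$.

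Next I would exploit the two convexity hypotheses along this geodesic. For $i\in\mathcal{I}$, geodesic convexity of $g_{i}$ makes $g_{i}\circ\gamma$ a convex (and $C^{1}$, since $g_{i}$ is differentiable and $\gamma$ smooth) function on $[0,1]$; its first-order subgradient inequality at $t=0$ yields $g_{i}\paren{\bar{x}}=\paren{g_{i}\circ\gamma}\paren{1}\geq g_{i}\paren{x_{k}}+\metr[]{\text{grad}\,g_{i}\paren{x_{k}}}{\xi_{k}}$, and combining this with $g_{i}\paren{\bar{x}}\leq0$ (feasibility of $\bar{x}$) gives precisely the linearized inequality constraint $g_{i}\paren{x_{k}}+\metr[]{\text{grad}\,g_{i}\paren{x_{k}}}{\xi_{k}}\leq0$. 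For $j\in\mathcal{E}$, geodesic linearity of $h_{j}$ makes $h_{j}\circ\gamma$ an affine function on $[0,1]$, so $h_{j}\paren{\bar{x}}=\paren{h_{j}\circ\gamma}\paren{1}=h_{j}\paren{x_{k}}+\metr[]{\text{grad}\,h_{j}\paren{x_{k}}}{\xi_{k}}$; since $h_{j}\paren{\bar{x}}=0$, the linearized equality constraint holds with equality. Hence $\xi_{k}$ lies in the feasible region of~\eqref{QP}, and as $x_{k}$ was an arbitrary iterate, Assumption~A\ref{subprobfeasi} follows.

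The argument is largely direct, so the only step demanding genuine care is the very first one: the existence of the connecting geodesic. This is exactly where completeness is indispensable, since without it the exponential map need not be surjective and no guaranteeing $\xi_{k}$ would exist. I would also stress that the retraction $R$ cannot replace the exponential map here, because the first-order characterizations of geodesic convexity and geodesic linearity that drive the estimates are stated along genuine geodesics rather than along arbitrary retraction curves.
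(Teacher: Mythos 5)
Your proof is correct and takes essentially the same route as the paper's: both invoke the Hopf--Rinow theorem to obtain a geodesic joining $x_{k}$ to $\bar{x}$ and then apply the first-order inequality for geodesic convexity (and, for $h_{j}$, its two-sided version coming from geodesic linearity) at $t=1$ to conclude that the corresponding tangent vector is feasible for subproblem~\eqref{QP}. The only cosmetic difference is that you derive that inequality from one-dimensional convexity of $g_{i}\circ\gamma$, whereas the paper's definition of geodesic convexity (following Boumal) is the first-order inequality itself, so it is applied directly.
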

\begin{proof}
See Appendix~\ref{SubsecAppen:PrfofPropSuffcond} for the proof and the definitions of geodesically convex and linear functions.
\end{proof}

Assumption~A\ref{Bkbounded} is fulfilled with the identity mapping on $\tanspc[x_{k}]\mani$, for example. Assumption A\ref{genebound} often appears in the literature on Euclidean SQO methods~\cite{KatoFukushima07SQPforNSOCP, Fukushima86SQPavoidMaratos, NocedalWright06NumOpt}. If $\mani$ is compact such as a sphere and the Stiefel manifold, the boundedness of $\brc{x_{k}}$ in Assumption~A\ref{genebound} holds since $\mani$ itself is bounded by the Hopf-Rinow theorem. Note that, however, these assumptions are mitigated in the state-of-the-art SQO methods in Euclidean spaces. In Section~\ref{conclusion}, we discuss techniques to weaken the assumptions.

Recall that \eqref{QP} is a convex optimization problem in the tangent space whose objective function is strongly convex and constraints are all affine functions. Thus, under Assumption A\ref{subprobfeasi}, \eqref{QP} has the unique optimum $\Delta x_{k}^{\ast}$ and the KKT conditions for \eqref{QP} become a certificate for $\Delta x^{\ast}_{k}$ to be a global optimum.

The following lemma will be used combined with Assumption~A\ref{Bkbounded}.
\begin{lemma}\label{AkOperNormBounded}
    Let $x\in\mani$ and $\mathcal{A}_{x}:\tanspc[x]\mani\rightarrow\tanspc[x]\mani$ be a symmetric positive-definite linear operator. Suppose that the uniform positive-definiteness of $\mathcal{A}_{x}$ holds; that is, there exist $m,M>0$ such that $m \norm{\xi}^{2} \leq \metr{\mathcal{A}_{x}\sbra{\xi}}{\xi} \leq M \norm{\xi}^{2}$ for all $\xi\in\tanspc[x]\mani$. Then, it follows that
    \begin{align*}
        \norm{\mathcal{A}_{x}}_{\rm op} \leq M\sqrt{d} \text{ and } \norm{\mathcal{A}_{x}^{-1}}_{\rm op} \leq \frac{\sqrt{d}}{m},
    \end{align*}
    where $d$ is the dimension of $\mani$ and $\norm{\cdot}_{\rm op}$ denotes the operator norm on $\tanspc\mani$. \footnote{Given a tangent space $\tanspc[x]\mani$ and a linear mapping $\mathcal{A}_{x}:\tanspc[x]\mani\rightarrow\tanspc[x]\mani$, we define the operator norm as $\norm{\mathcal{A}_{x}}_{\rm op} \coloneqq \sup_{\xi\in\tanspc[x]\mani} \norm{\mathcal{A}_{x}\sbra{\xi}}_{x} \slash \norm{\xi}_{x}$, where $\norm{\cdot}_{x}$ is the norm on $\tanspc[x]\mani$. In this paper, although the operator norm differs depending on the tangent space, we will use the same notation $\norm{\cdot}_{\rm op}$ for brevity.}
\end{lemma}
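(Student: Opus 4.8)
The plan is to reduce the statement to an elementary spectral estimate on the finite-dimensional real inner product space $\paren{\tanspc[x]\mani, \metr{\cdot}{\cdot}}$. Since $\mathcal{A}_{x}$ is symmetric with respect to the Riemannian metric, the spectral theorem furnishes an orthonormal basis $\brc{e_{1},\ldots,e_{d}}$ of $\tanspc[x]\mani$ consisting of eigenvectors of $\mathcal{A}_{x}$, with real eigenvalues $\sigma_{1},\ldots,\sigma_{d}$; here orthonormal means $\metr{e_{i}}{e_{j}}=\delta_{ij}$, and such a basis exists because $\tanspc[x]\mani$ is a $d$-dimensional real inner product space on which $\mathcal{A}_{x}$ is self-adjoint. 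I would first pin down the eigenvalues: substituting $\xi=e_{i}$ into the hypothesis $m\norm{\xi}^{2}\le \metr{\mathcal{A}_{x}\sbra{\xi}}{\xi}\le M\norm{\xi}^{2}$ and using $\norm{e_{i}}=1$ together with $\metr{\mathcal{A}_{x}\sbra{e_{i}}}{e_{i}}=\sigma_{i}$ gives $m\le \sigma_{i}\le M$ for every $i$. In particular each $\sigma_{i}>0$, so $\mathcal{A}_{x}$ is invertible and $\mathcal{A}_{x}^{-1}$ is again symmetric positive-definite with the same eigenvectors and eigenvalues $1/\sigma_{i}\in\sbra{1/M,1/m}$.

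Next I would express the two operator norms through this eigendecomposition. For arbitrary $\xi=\sum_{i=1}^{d}c_{i}e_{i}\in\tanspc[x]\mani$ one has $\mathcal{A}_{x}\sbra{\xi}=\sum_{i=1}^{d}\sigma_{i}c_{i}e_{i}$, hence by orthonormality $\norm{\mathcal{A}_{x}\sbra{\xi}}^{2}=\sum_{i=1}^{d}\sigma_{i}^{2}c_{i}^{2}$ while $\norm{\xi}^{2}=\sum_{i=1}^{d}c_{i}^{2}$. Bounding each $\sigma_{i}^{2}\le M^{2}$ would already give the sharp estimate $\norm{\mathcal{A}_{x}}_{\rm op}\le M$; to obtain exactly the stated bound it suffices to use the cruder chain $\norm{\mathcal{A}_{x}}_{\rm op}\le \paren{\sum_{i=1}^{d}\sigma_{i}^{2}}^{1/2}\le \sqrt{d}\,M$, i.e. to dominate the operator norm by the Frobenius norm and then insert $\sigma_{i}\le M$. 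The identical computation applied to $\mathcal{A}_{x}^{-1}$, whose eigenvalues satisfy $1/\sigma_{i}\le 1/m$, yields $\norm{\mathcal{A}_{x}^{-1}}_{\rm op}\le \paren{\sum_{i=1}^{d}1/\sigma_{i}^{2}}^{1/2}\le \sqrt{d}/m$, which is the second claim.

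If one prefers to remain within the coordinate formalism used elsewhere in the paper rather than invoke the spectral theorem abstractly, the argument can be rerun after reducing the metric to the identity: writing $\coordmetr=\coordmetr^{1/2}\coordmetr^{1/2}$ and setting $B\coloneqq \coordmetr^{1/2}\widehat{\mathcal{A}_{x}}\coordmetr^{-1/2}$, where $\widehat{\mathcal{A}_{x}}$ denotes the coordinate matrix of $\mathcal{A}_{x}$, the symmetry relation $\widehat{\mathcal{A}_{x}}^{\top}\coordmetr=\coordmetr\widehat{\mathcal{A}_{x}}$ (which expresses that $\mathcal{A}_{x}$ is self-adjoint for $\metr{\cdot}{\cdot}$) makes $B$ a genuinely symmetric matrix; a short computation gives $\norm{\mathcal{A}_{x}}_{\rm op}=\norm{B}_{2}$ together with the localization of the eigenvalues of $B$ in $\sbra{m,M}$, after which the same Frobenius-norm bound yields the claim. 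I do not anticipate a genuine obstacle here: the whole content is the eigenvalue localization $\sigma_{i}\in\sbra{m,M}$, and the only point requiring a little care is to apply the spectral theorem in the correct inner product — the Riemannian one $\metr{\cdot}{\cdot}$ on $\tanspc[x]\mani$, with respect to which $\mathcal{A}_{x}$ (not its coordinate matrix $\widehat{\mathcal{A}_{x}}$) is self-adjoint. The factor $\sqrt{d}$ is simply the loss incurred in passing from the operator norm to the Frobenius norm and is harmless for the intended use with Assumption~A\ref{Bkbounded}.
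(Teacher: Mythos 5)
Your proof is correct, and while it rests on the same underlying fact as the paper's — localization of the eigenvalues of $\mathcal{A}_{x}$ in $\sbra{m,M}$ followed by a Frobenius-type domination — it is executed by a genuinely different, more intrinsic route. The paper's proof works entirely in coordinates: it first assumes without loss of generality that the coordinate expression $\coordmetr$ of the metric is the identity matrix, citing Huang's thesis (Section 1.2.7) to justify this normalization; it then verifies $\widehat{\mathcal{A}_{x}^{-1}}=\widehat{\mathcal{A}_{x}}^{-1}$, bounds the eigenvalues of the genuinely symmetric matrices $\widehat{\mathcal{A}_{x}}$ and $\widehat{\mathcal{A}_{x}^{-1}}$ by $M$ and $1/m$, deduces the Frobenius bounds $\norm{\widehat{\mathcal{A}_{x}}}_{\text{F}}\leq M\sqrt{d}$ and $\norm{\widehat{\mathcal{A}_{x}^{-1}}}_{\text{F}}\leq \sqrt{d}/m$, and finally invokes a second external result (Huang's Lemma 6.2.6) to dominate $\norm{\mathcal{A}_{x}}_{\rm op}$ by the Frobenius norm of the coordinate matrix. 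You instead apply the spectral theorem directly to the self-adjoint operator $\mathcal{A}_{x}$ on the inner product space $\paren{\tanspc[x]\mani,\metr{\cdot}{\cdot}}$ — which is precisely the intrinsic content of the paper's normalization, since choosing coordinates with $\coordmetr$ equal to the identity amounts to fixing an orthonormal basis — and the Rayleigh-quotient bounds $m\leq\sigma_{i}\leq M$ together with the eigenbasis expansion then yield everything without outside citations. Your route buys two things: self-containedness, and the observation that the stated bounds are slack, the sharp constants being $\norm{\mathcal{A}_{x}}_{\rm op}\leq M$ and $\norm{\mathcal{A}_{x}^{-1}}_{\rm op}\leq 1/m$, with the factor $\sqrt{d}$ entering only through the operator-norm-to-Frobenius step that the paper's argument takes and that you reproduce merely to match the statement (harmless, since downstream uses only need some finite bound). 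Your fallback coordinate argument via the congruence $B\coloneqq\coordmetr^{1/2}\widehat{\mathcal{A}_{x}}\coordmetr^{-1/2}$, using the self-adjointness relation $\widehat{\mathcal{A}_{x}}^{\top}\coordmetr=\coordmetr\widehat{\mathcal{A}_{x}}$ to make $B$ symmetric with $\norm{\mathcal{A}_{x}}_{\rm op}=\norm{B}_{2}$, is also sound and is in effect the paper's proof with its without-loss-of-generality step made explicit.
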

\begin{proof}
See Appendix~\ref{appendix:prfofAkOperNormBounded}.
\end{proof}

Lemma~\ref{AkOperNormBounded} implies the boundedness of the search directions.
\begin{proposition}\label{DeltaxkBounded}
    Under Assumptions~A\ref{subprobfeasi}, A\ref{Bkbounded}, and A\ref{genebound}, $\brc{\norm{\Delta x^{\ast}_{k}}}$ is bounded.
\end{proposition}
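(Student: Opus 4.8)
The plan is to bound $\norm{\Delta x_{k}^{\ast}}$ by reading off $B_{k}\sbra{\Delta x_{k}^{\ast}}$ from the stationarity condition \eqref{subprobKKTLagsimple} and then inverting $B_{k}$ with the uniform bounds from Lemma~\ref{AkOperNormBounded}. Note first that Assumption~A\ref{subprobfeasi} guarantees that $\Delta x_{k}^{\ast}$ exists, is unique, and satisfies the KKT system \eqref{subprobKKT} with multipliers $\mu_{k}^{\ast},\lambda_{k}^{\ast}$, so this reading is legitimate.

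The first step is to control the data on the right-hand side of \eqref{subprobKKTLagsimple} uniformly in $k$. By Assumption~A\ref{genebound} the iterates $\brc{x_{k}}$ form a bounded subset of $\mani$; since $\mani$ is connected and complete, the Hopf-Rinow theorem makes its closure compact, and the continuous maps $x\mapsto\norm{\text{grad}\,f\paren{x}}_{x}$, $x\mapsto\norm{\text{grad}\,g_{i}\paren{x}}_{x}$, and $x\mapsto\norm{\text{grad}\,h_{j}\paren{x}}_{x}$ are therefore uniformly bounded along $\brc{x_{k}}$. Moreover, the update rule $\paren{\mu_{k+1},\lambda_{k+1}}=\paren{\mu_{k}^{\ast},\lambda_{k}^{\ast}}$ in Algorithm~\ref{SQPonmani} shows that $\brc{\mu_{k}^{\ast}}$ and $\brc{\lambda_{k}^{\ast}}$ are tails of $\brc{\mu_{k}}$ and $\brc{\lambda_{k}}$, hence bounded by Assumption~A\ref{genebound}. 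Combining these facts, there is a constant $C>0$, independent of $k$, with $\norm{\text{grad}\,f\paren{x_{k}} + \sum_{i\in\mathcal{I}}\mu_{ki}^{\ast}\,\text{grad}\,g_{i}\paren{x_{k}} + \sum_{j\in\mathcal{E}}\lambda_{kj}^{\ast}\,\text{grad}\,h_{j}\paren{x_{k}}}\le C$ for all $k$.

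The second step uses $B_{k}$. Reading \eqref{subprobKKTLagsimple} as $B_{k}\sbra{\Delta x_{k}^{\ast}} = -\,\text{grad}\,f\paren{x_{k}} - \sum_{i\in\mathcal{I}}\mu_{ki}^{\ast}\,\text{grad}\,g_{i}\paren{x_{k}} - \sum_{j\in\mathcal{E}}\lambda_{kj}^{\ast}\,\text{grad}\,h_{j}\paren{x_{k}}$ gives $\norm{B_{k}\sbra{\Delta x_{k}^{\ast}}}\le C$. Since Assumption~A\ref{Bkbounded} is exactly the uniform positive-definiteness hypothesis of Lemma~\ref{AkOperNormBounded} with the same $m,M$, that lemma yields $\norm{B_{k}^{-1}}_{\rm op}\le\sqrt{d}/m$ for every $k$, whence $\norm{\Delta x_{k}^{\ast}} = \norm{B_{k}^{-1}\sbra{B_{k}\sbra{\Delta x_{k}^{\ast}}}} \le \norm{B_{k}^{-1}}_{\rm op}\norm{B_{k}\sbra{\Delta x_{k}^{\ast}}} \le C\sqrt{d}/m$, a bound independent of $k$. (Alternatively, one can bypass the lemma using only the lower bound in Assumption~A\ref{Bkbounded} and Cauchy-Schwarz: $m\norm{\Delta x_{k}^{\ast}}^{2}\le\metr[]{B_{k}\sbra{\Delta x_{k}^{\ast}}}{\Delta x_{k}^{\ast}}\le\norm{B_{k}\sbra{\Delta x_{k}^{\ast}}}\norm{\Delta x_{k}^{\ast}}\le C\norm{\Delta x_{k}^{\ast}}$, so $\norm{\Delta x_{k}^{\ast}}\le C/m$.)

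I do not expect a genuine obstacle here; the two points requiring care are bookkeeping. The first is extracting uniform boundedness of the gradient norms from mere boundedness of $\brc{x_{k}}$, which is precisely where completeness of $\mani$ and Hopf-Rinow are needed to upgrade boundedness to relative compactness. The second is justifying that the subproblem multipliers $\brc{\mu_{k}^{\ast}},\brc{\lambda_{k}^{\ast}}$ are bounded, which follows from Assumption~A\ref{genebound} only through the identification $\paren{\mu_{k+1},\lambda_{k+1}}=\paren{\mu_{k}^{\ast},\lambda_{k}^{\ast}}$ embedded in the algorithm. With both in place the bound is immediate.
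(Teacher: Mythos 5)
Your proposal is correct and follows essentially the same route as the paper's proof: solve \eqref{subprobKKTLagsimple} for $\Delta x_{k}^{\ast}$ via $B_{k}^{-1}$, bound $\norm{B_{k}^{-1}}_{\rm op}$ by $\sqrt{d}/m$ using Lemma~\ref{AkOperNormBounded} with Assumption~A\ref{Bkbounded}, and bound the gradient combination uniformly via Assumption~A\ref{genebound} and continuity. Your additional bookkeeping (Hopf--Rinow compactness and the identification $\paren{\mu_{k+1},\lambda_{k+1}}=\paren{\mu_{k}^{\ast},\lambda_{k}^{\ast}}$, which the paper leaves implicit) and the alternative Cauchy--Schwarz bound are fine but do not change the argument.
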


\begin{proof}
    Since $\paren{\Delta x_{k}^{\ast}, \mu^{\ast}_{k}, \lambda^{\ast}_{k}}$ satisfies \eqref{subprobKKTLagsimple} for every $k$, we have
    \begin{align*}
        \begin{split}
            \norm{\Delta x_{k}^{\ast}}_{x_{k}}
            &= \norm{- B_{k}^{-1}\sbra{\text{\upshape grad}\,f\paren{x_{k}} + \sum_{i\in\mathcal{I}} \mu_{ki}^{\ast} \text{\upshape grad}\,g_{i}\paren{x_{k}} + \sum_{j\in\mathcal{E}}\lambda_{kj}^{\ast} \text{\upshape grad}\,h_{j}\paren{x_{k}}}}\\
            &\leq \norm{B_{k}^{-1}}_{\rm op} \norm{\text{\upshape grad}\,f\paren{x_{k}} + \sum_{i\in\mathcal{I}} \mu_{ki}^{\ast} \text{\upshape grad}\,g_{i}\paren{x_{k}} + \sum_{j\in\mathcal{E}}\lambda_{kj}^{\ast} \text{\upshape grad}\,h_{j}\paren{x_{k}}}\\
            &\leq \frac{\sqrt{d}}{m} \norm{\text{\upshape grad}\,f\paren{x_{k}} + \sum_{i\in\mathcal{I}} \mu_{ki}^{\ast} \text{\upshape grad}\,g_{i}\paren{x_{k}} + \sum_{j\in\mathcal{E}}\lambda_{kj}^{\ast} \text{\upshape grad}\,h_{j}\paren{x_{k}}},
        \end{split}
    \end{align*}
    where the last inequality follows from Lemma~\ref{AkOperNormBounded}. From Assumption A\ref{genebound} and the continuity of $\brc{\text{grad}g_{i}}_{i\in\mathcal{I}}$ and $\brc{\text{grad}h_{j}}_{j\in\mathcal{E}}$, the above inequality implies the boundedness of $\brc{\norm{\Delta x^{\ast}_{k}}}$. The proof is complete.
\end{proof}

The following proposition ensures that the penalty parameter $\rho_{k}$ eventually reaches a constant under Assumption~A\ref{genebound}.
\begin{proposition}\label{prop:ensurebarrho}
    Under Assumption~A\ref{genebound}, there exist $\tilde{k}_{1}\in\mathbb{N}$ and $\bar{\rho}\in\eucli$ such that $\rho_{k}=\bar{\rho}$ holds for any $k\geq \tilde{k}_{1}$.
\end{proposition}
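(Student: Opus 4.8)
The plan is to exploit the structure of the update rule \eqref{updaterho} together with Assumption~A\ref{genebound}. I will show that $\brc{\rho_k}$ is monotonically non-decreasing, is bounded from above, and jumps by more than $\varepsilon$ whenever it changes at all. A non-decreasing sequence that is bounded above yet increases by a fixed positive amount at each change can increase only finitely many times, so it must become constant beyond some index $\tilde{k}_1$, which yields the desired $\bar{\rho}$. First I would observe that Assumption~A\ref{genebound} controls the quantities $\upsilon_{k}=\max\brc{\max_{i\in\mathcal{I}}\mu_{ki}^{\ast}, \max_{j\in\mathcal{E}}\abs{\lambda_{kj}^{\ast}}}$ that drive the update. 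Since $(\mu_{k+1},\lambda_{k+1})=(\mu_k^{\ast},\lambda_k^{\ast})$ in Algorithm~\ref{SQPonmani}, the boundedness of $\brc{(\mu_k,\lambda_k)}$ asserted by A\ref{genebound} is equivalent to the boundedness of $\brc{(\mu_k^{\ast},\lambda_k^{\ast})}$; in particular there is a constant $U>0$ with $\upsilon_k\leq U$ for all $k$.

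Next I would read off the two monotonicity/gap properties directly from \eqref{updaterho}. If $\rho_{k-1}\geq\upsilon_k$ then $\rho_k=\rho_{k-1}$; otherwise $\rho_{k-1}<\upsilon_k$ and $\rho_k=\upsilon_k+\varepsilon$, so that $\rho_k-\rho_{k-1}=(\upsilon_k-\rho_{k-1})+\varepsilon>\varepsilon$. Hence $\brc{\rho_k}$ is non-decreasing and every strict increase exceeds $\varepsilon$. Moreover, in the increasing case $\rho_k=\upsilon_k+\varepsilon\leq U+\varepsilon$, so a straightforward induction gives $\rho_k\leq\max\brc{\rho_{-1},U+\varepsilon}$ for all $k$, which shows that $\brc{\rho_k}$ is bounded above.

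Finally, suppose for contradiction that $\rho_k$ changed infinitely often. Because each change adds more than $\varepsilon$ and all other increments are zero, telescoping the differences would force $\rho_k\to\infty$, contradicting the uniform upper bound just derived. Therefore there are only finitely many indices at which $\rho_k$ increases, and beyond the last such index its value is frozen at some $\bar{\rho}\in\eucli$, furnishing the required $\tilde{k}_1$. I do not anticipate a serious obstacle here: the argument is elementary bookkeeping. The only points demanding care are the identification of the boundedness of $\brc{\upsilon_k}$ with Assumption~A\ref{genebound} through the multiplier update $(\mu_{k+1},\lambda_{k+1})=(\mu_k^{\ast},\lambda_k^{\ast})$, and the observation that a uniformly positive minimum increment $\varepsilon$ is incompatible with simultaneous monotone boundedness and infinitely many increases.
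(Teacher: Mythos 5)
Your proof is correct and follows essentially the same route as the paper's: monotonicity and boundedness of $\brc{\rho_{k}}$ derived from the update rule \eqref{updaterho} and Assumption~A\ref{genebound} (via the multiplier update making $\brc{\upsilon_{k}}$ bounded), combined with the fixed minimum increment $\varepsilon$ to conclude that only finitely many increases can occur. Your version merely spells out the bookkeeping (the telescoping bound $\rho_{k}\leq\max\brc{\rho_{-1},U+\varepsilon}$) that the paper compresses into an appeal to the convergence of a monotone bounded sequence.
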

\begin{proof}
    From Assumption A\ref{genebound} with rule \eqref{updaterho} for updating the penalty parameter, it follows that $\brc{\rho_{k}}$ is monotonically nondecreasing and bounded. This implies that $\brc{\rho_{k}}$ converges to some real value denoted by $\bar{\rho}$. Moreover since $\rho_k$ increases by at least prefixed $\varepsilon > 0$ when it does, the assertion is ensured.
\end{proof}

We next prove Proposition~\ref{prop:meritineq} that asserts $\Delta x_{k}^{\ast}$ is a descent direction for the merit function $P_{\rho_{k}} \circ R_{x_{k}}\paren{\cdot}$ with $\rho_{k}$ sufficiently large when $\norm{\Delta x_{k}^{\ast}} \neq 0$. To this end, we first present the three lemmas; Lemmas~\ref{lemm:DDsigmacases}, \ref{lemm:DDsigmaIneq}, and \ref{lemm:DDtau}. To prove Lemmas~\ref{lemm:DDsigmaIneq} and \ref{lemm:DDtau}, we exploit the specific properties of the retraction together with a chain rule. This fact is worth mentioning as a peculiar manner to the Riemannian setting.

Let us define the functions $\sigma_{i x_k}:\tanspc[x_{k}]\mani\rightarrow\eucli$ for $i\in\mathcal{I}$ and $\tau_{x_{k}}:\tanspc[x_{k}]\mani\rightarrow\eucli$ by
\begin{align*}
    \sigma_{i x_k}\paren{\zeta} \coloneqq \max\brc{0,g_{i}\circ R_{x_{k}}\paren{\zeta}} \text{ for all $i\in\mathcal{I}$} \text{ and }  \tau_{x_{k}}\paren{\zeta} \coloneqq \sum_{j\in\mathcal{E}}\abs{h_{j} \circ R_{x_{k}}\paren{\zeta}}
\end{align*}
for $\zeta \in\tanspc[x_{k}]\mani$. Note that these functions are continuous, but not differentiable. The next lemma shows specific formulae of the one-sided directional derivative of $\sigma_{i x_{k}}$ for $i\in\mathcal{I}$.
\begin{lemma}\label{lemm:DDsigmacases}
For any $i\in\mathcal{I}$ and all $\zeta, \xi \in \tanspc[x_{k}]\mani$, the one-sided directional derivative of $\sigma_{i x_{k}}$ at $\zeta$ along $\xi$ is given by
    \begin{align*}
        \sigma^{\prime}_{i x_{k}}\paren{\zeta;\xi} = 
        \begin{cases}
            \paren{g_{i}\circ R_{x_{k}}}^{\prime}\paren{\zeta; \xi}, &
            \begin{aligned}
                &\text{ if } g_{i}\circ R_{x_{k}}\paren{\zeta} > 0 \text{ or}\\
                &\text{ if }g_{i}\circ R_{x_{k}}\paren{\zeta} = 0 \text{ and } \paren{g_{i}\circ R_{x_{k}}}^{\prime}\paren{\zeta;\xi} \geq 0,
            \end{aligned}
            \\
            0, &
            \begin{aligned}
                &\text{ if } g_{i}\circ R_{x_{k}}\paren{\zeta} < 0 \text{ or}\\
                &\text{ if }g_{i}\circ R_{x_{k}}\paren{\zeta} = 0 \text{ and } \paren{g_{i}\circ R_{x_{k}}}^{\prime}\paren{\zeta;\xi} < 0.
            \end{aligned}
        \end{cases}
    \end{align*}
\end{lemma}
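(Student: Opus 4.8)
The plan is to prove this by reducing the one-sided directional derivative of the maximum $\max\{0, u\}$ to the one-sided directional derivative of the inner function $u \coloneqq g_i \circ R_{x_k}$, using the general fact that composing a continuous function with $\max\{0, \cdot\}$ behaves piecewise depending on the sign of $u$ at the base point. The key observation is that $\sigma_{i x_k}(\zeta) = \max\{0, u(\zeta)\} = \phi(u(\zeta))$, where $\phi(t) \coloneqq \max\{0, t\}$ is the scalar ramp function. So I would like to apply a chain rule for one-sided directional derivatives. Let me check whether this paper has the tools; it defines $l'(p; v)$ for continuous $l$ on a vector space, which is exactly what we need, but composing with the nonsmooth $\phi$ requires care.

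**First I would** fix $i \in \mathcal{I}$, $\zeta, \xi \in \tanspc[x_k]\mani$, and set $u \coloneqq g_i \circ R_{x_k}$, which is continuously differentiable on $\tanspc[x_k]\mani$ (as the composition of a $C^1$ function with the smooth retraction), so its one-sided directional derivative $u'(\zeta; \xi)$ exists and equals the ordinary directional derivative. Then I would compute directly from the definition:
\begin{align*}
    \sigma_{i x_k}'(\zeta; \xi) = \lim_{t \downarrow 0} \frac{\max\{0, u(\zeta + t\xi)\} - \max\{0, u(\zeta)\}}{t}.
\end{align*}
The proof then splits into the cases prescribed in the statement according to the sign of $u(\zeta)$.

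**Next I would handle the three regimes.** If $u(\zeta) > 0$, then by continuity of $u$ we have $u(\zeta + t\xi) > 0$ for all small enough $t > 0$, so $\max\{0, u(\zeta + t\xi)\} = u(\zeta + t\xi)$ and $\max\{0, u(\zeta)\} = u(\zeta)$; the difference quotient reduces to that of $u$, giving $\sigma_{i x_k}'(\zeta;\xi) = u'(\zeta;\xi)$. Symmetrically, if $u(\zeta) < 0$, then $u(\zeta + t\xi) < 0$ for all small $t>0$, so both terms vanish and the limit is $0$. The delicate case is $u(\zeta) = 0$: here $\max\{0,u(\zeta)\} = 0$, so the difference quotient is $\frac{\max\{0, u(\zeta + t\xi)\}}{t}$. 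Since $u$ is differentiable, $u(\zeta + t\xi) = t\, u'(\zeta;\xi) + o(t)$. If $u'(\zeta;\xi) \ge 0$, then $u(\zeta + t\xi) \ge 0$ for small $t$ when the derivative is strictly positive (and when it is zero the $o(t)$ term makes the positive part still $o(t)$, so the quotient tends to $0 = u'(\zeta;\xi)$), yielding $\sigma_{i x_k}'(\zeta;\xi) = u'(\zeta;\xi)$; if $u'(\zeta;\xi) < 0$, then $u(\zeta + t\xi) < 0$ for small $t$, so the positive part is $0$ and the limit is $0$.

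**The main obstacle** I anticipate is the boundary subcase $u(\zeta) = 0$ with $u'(\zeta;\xi) = 0$: here the sign of $u(\zeta + t\xi)$ for small $t$ is genuinely indeterminate and governed by higher-order behavior, so one cannot simply declare the positive part equal to either $u$ or $0$ along the whole segment. The resolution is that one does not need to: regardless of the sign of the $o(t)$ remainder, $\max\{0, u(\zeta+t\xi)\} \le |u(\zeta+t\xi)| = o(t)$, so the difference quotient is squeezed to $0$, which coincides with the claimed value $u'(\zeta;\xi) = 0$ on that branch. Thus the two cases in the statement are consistent at the seam, and the squeeze argument is what makes the $u'(\zeta;\xi) \ge 0$ branch and the $u'(\zeta;\xi) < 0$ branch agree in the limit; I would make sure to write this cleanly rather than treating the zero-derivative subcase as a separate fourth case.
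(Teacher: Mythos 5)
Your proposal is correct and follows essentially the same route as the paper: the identical three-way case split on the sign of $g_{i}\circ R_{x_{k}}\paren{\zeta}$, with the strictly positive and strictly negative cases handled by continuity on a small neighborhood. The only cosmetic difference is at the seam $g_{i}\circ R_{x_{k}}\paren{\zeta}=0$, where the paper commutes $\max\brc{0,\cdot}$ with the limit (using its positive homogeneity and continuity) in one line, whereas you obtain the same conclusion via a first-order expansion and a squeeze argument in the zero-derivative subcase; both are valid and interchangeable.
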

\begin{proof}
    Choose $i\in\mathcal{I}$ and $\zeta,\xi\in\tanspc[x_{k}]\mani$ arbitrarily. We will consider the following three cases.
    
    \begin{enumerate}[({\bf\casecom} 1)]
    \item If $ g_{i}\circ R_{x_{k}}\paren{\zeta} > 0$, then by taking sufficiently small $c>0$, we have $\sigma_{i x_{k}}\paren{\chi} = g_{i}\circ R_{x_{k}}\paren{\chi}$ for all $\chi \in \mathbb{B}_{c,x_{k}}\paren{\zeta}\coloneqq \brc{\chi\in\tanspc[x_{k}]\mani\relmiddle{|}\norm{\chi-\zeta}\leq c}$. Hence, it holds that $\sigma^{\prime}_{i x_{k}}\paren{\zeta;\xi} \allowbreak = \paren{g_{i}\circ R_{x_{k}}}^{\prime}\paren{\zeta; \xi}$.
    
    \item If $g_{i}\circ R_{x_{k}}\paren{\zeta} = 0$, then from the definition of the one-sided derivative, we have
    \begin{align*}
    \sigma^{\prime}_{i x_{k}}\paren{\zeta;\xi} 
    &= \lim_{t\downarrow 0}\frac{\max\brc{0,g_{i}\circ R_{x_{k}}\paren{\zeta+t\xi}}-0}{t}\\
    &= \max\brc{0, \lim_{t\downarrow 0}\frac{g_{i}\circ R_{x_{k}}\paren{\zeta+t\xi}- g_{i}\circ R_{x_{k}}\paren{\zeta}}{t}}\\
    &= \max\brc{0, \paren{g_{i}\circ R_{x_{k}}}^{\prime}\paren{\zeta; \xi}}\\
    &=\begin{cases}
             \paren{g_{i}\circ R_{x_{k}}}^{\prime}\paren{\zeta; \xi}, & \text{if } \paren{g_{i}\circ R_{x_{k}}}^{\prime}\paren{\zeta; \xi} \geq 0,\\
             0, & \text{otherwise}.
            \end{cases}
    \end{align*}
    
    \item Otherwise, if $g_{i}\circ R_{x_{k}}\paren{\zeta} < 0$, then considering a sufficiently small neighborhood of $\zeta$ in the same way as the first case, we have $\sigma^{\prime}_{i x_{k}}\paren{\zeta;\xi} =0$.
    \end{enumerate}
\end{proof}

In the following lemma, we prove an inequality on the one-sided directional derivative of $\sigma_{ix_{k}}$ by using Lemma~\ref{lemm:DDsigmacases}.
\begin{lemma}\label{lemm:DDsigmaIneq}
    Let $\paren{\Delta x_{k}^{\ast}, \mu_{k}^{\ast}, \lambda_{k}^{\ast}}$ be a KKT triplet satisfying \eqref{subprobKKT}. For any $i\in\mathcal{I}$, if $\rho \geq \mu_{ki}^{\ast}$, then
    \begin{align*}
        \mu_{ki}^{\ast}g_{i}\paren{x_{k}} + \rho \sigma^{\prime}_{i x_{k}}\paren{0_{x_{k}};\Delta x^{\ast}_{k}} \leq 0.
    \end{align*}
\end{lemma}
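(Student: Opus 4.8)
The plan is to compute the one-sided directional derivative $\sigma^{\prime}_{i x_{k}}\paren{0_{x_{k}};\Delta x^{\ast}_{k}}$ explicitly through Lemma~\ref{lemm:DDsigmacases}, and then verify the claimed inequality in each of the resulting cases using the subproblem KKT conditions \eqref{subprobKKTineq} and \eqref{subprobKKTcompl} together with the hypothesis $\rho\geq\mu_{ki}^{\ast}$. First I would evaluate the two quantities that govern the case split in Lemma~\ref{lemm:DDsigmacases}. Using the retraction property $R_{x_{k}}\paren{0_{x_{k}}}=x_{k}$ from \eqref{def:retr}, we have $g_{i}\circ R_{x_{k}}\paren{0_{x_{k}}}=g_{i}\paren{x_{k}}$. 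Moreover, since $g_{i}\circ R_{x_{k}}$ is differentiable (both $g_{i}$ and $R_{x_{k}}$ being smooth), its one-sided directional derivative coincides with the ordinary differential, and the chain rule together with $\text{D}R_{x_{k}}\paren{0_{x_{k}}}=\text{id}_{\tanspc[x_{k}]\mani}$ from \eqref{def:retr} yields $\paren{g_{i}\circ R_{x_{k}}}^{\prime}\paren{0_{x_{k}};\Delta x^{\ast}_{k}}=\text{D}g_{i}\paren{x_{k}}\sbra{\Delta x^{\ast}_{k}}=\metr[]{\text{grad}\,g_{i}\paren{x_{k}}}{\Delta x^{\ast}_{k}}$ by \eqref{def:grad}.

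For brevity write $a\coloneqq g_{i}\paren{x_{k}}$ and $b\coloneqq \metr[]{\text{grad}\,g_{i}\paren{x_{k}}}{\Delta x^{\ast}_{k}}$, so that the subproblem feasibility \eqref{subprobKKTineq} reads $a+b\leq 0$, complementarity \eqref{subprobKKTcompl} gives $\mu_{ki}^{\ast}\paren{a+b}=0$, and $\mu_{ki}^{\ast}\geq 0$. With this notation, Lemma~\ref{lemm:DDsigmacases} tells us that $\sigma^{\prime}_{i x_{k}}\paren{0_{x_{k}};\Delta x^{\ast}_{k}}$ equals $b$ when $a>0$ or when $a=0$ and $b\geq 0$, and equals $0$ otherwise. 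I would then check the target inequality $\mu_{ki}^{\ast}a+\rho\,\sigma^{\prime}_{i x_{k}}\paren{0_{x_{k}};\Delta x^{\ast}_{k}}\leq 0$ in these two groups. In the group where $\sigma^{\prime}_{i x_{k}}\paren{0_{x_{k}};\Delta x^{\ast}_{k}}=0$ we have $a\leq 0$, so the left-hand side is $\mu_{ki}^{\ast}a\leq 0$. In the group where $\sigma^{\prime}_{i x_{k}}\paren{0_{x_{k}};\Delta x^{\ast}_{k}}=b$, I would first observe that $b\leq 0$ holds in both subcases: if $a>0$ then $a+b\leq 0$ forces $b\leq -a<0$, and if $a=0$ with $b\geq 0$ then $a+b\leq 0$ forces $b=0$. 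Since $b\leq 0$ and $\rho\geq\mu_{ki}^{\ast}\geq 0$, multiplying gives $\rho b\leq\mu_{ki}^{\ast}b$, whence $\mu_{ki}^{\ast}a+\rho b\leq\mu_{ki}^{\ast}\paren{a+b}\leq 0$, the last step using $a+b\leq 0$ and $\mu_{ki}^{\ast}\geq 0$ (indeed complementarity renders this an equality).

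I do not expect a deep obstacle here. The only points requiring care are the correct reduction of the one-sided directional derivative of $g_{i}\circ R_{x_{k}}$ through the retraction — invoking the chain rule and the defining properties \eqref{def:retr} to replace it by the Riemannian inner product $\metr[]{\text{grad}\,g_{i}\paren{x_{k}}}{\Delta x^{\ast}_{k}}$ — and the bookkeeping at the boundary case $g_{i}\paren{x_{k}}=0$, where the value of $\sigma^{\prime}_{i x_{k}}$ itself depends on the sign of $b$. Once these are settled, the inequality collapses to an elementary sign argument driven by $\mu_{ki}^{\ast}\geq 0$, the subproblem feasibility $a+b\leq 0$, and the assumption $\rho\geq\mu_{ki}^{\ast}$.
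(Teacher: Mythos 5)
Your proposal is correct and follows essentially the same route as the paper's proof: the same reduction of $\paren{g_{i}\circ R_{x_{k}}}^{\prime}\paren{0_{x_{k}};\Delta x^{\ast}_{k}}$ to $\metr[]{\text{grad}\,g_{i}\paren{x_{k}}}{\Delta x^{\ast}_{k}}$ via the retraction properties \eqref{def:retr}, the same case split driven by Lemma~\ref{lemm:DDsigmacases}, and the same KKT ingredients \eqref{subprobKKTineq}--\eqref{subprobKKTcompl}. Your only deviation is cosmetic: you merge the paper's first two cases into the single group $\sigma^{\prime}_{ix_{k}}=b$ and close it with the unified estimate $\rho b\leq\mu_{ki}^{\ast}b$ (valid since $b\leq 0$), whereas the paper handles $g_{i}\paren{x_{k}}>0$ and $g_{i}\paren{x_{k}}=0$ separately, which is a slightly tidier but not substantively different argument.
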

\begin{proof}
    Choose $i\in\mathcal{I}$ arbitrarily. Consider the following three cases.

    \begin{enumerate}[({\bf\casecom} 1)]
        \item If $g_{i}\circ R_{x_{k}}\paren{0_{x_{k}}} > 0$, then, since it follows from \eqref{def:retr} that
    \begin{align*}
        \paren{g_{i}\circ R_{x_{k}}}^{\prime}\paren{0_{x_{k}};\Delta x^{\ast}_{k}}
        = \text{D}g_{i}\paren{R_{x_{k}}\paren{0_{x_{k}}}}\sbra{\text{D}R_{x}\paren{0_{x}}\sbra{\Delta x_{k}^{\ast}}}
        =\text{D}g_{i}\paren{x_{k}}\sbra{\Delta x_{k}^{\ast}}
    \end{align*}
    under $\tanspc[0_{x_{k}}]\paren{\tanspc[x_{k}]\mani}\simeq\tanspc[x_{k}]\mani$, we have
    \begin{align*}
        &\mu_{ki}^{\ast}g_{i}\paren{x_{k}} + \rho \sigma^{\prime}_{i x_{k}}\paren{0_{x_{k}};\Delta x^{\ast}_{k}}\\
        &=\mu_{ki}^{\ast}g_{i}\paren{x_{k}} + \rho\paren{g_{i}\circ R_{x_{k}}}^{\prime}\paren{0_{x_{k}};\Delta x^{\ast}_{k}}\\        
        &= \mu_{ki}^{\ast}g_{i}\paren{x_{k}} + \rho \text{D}g_{i}\paren{x_{k}}\sbra{\Delta x_{k}^{\ast}}\\
        &= \mu_{ki}^{\ast}g_{i}\paren{x_{k}} + \rho\metr[]{\text{grad}\,g_{i}\paren{x_{k}}}{\Delta x_{k}^{\ast}}\\
        &= \mu_{ki}^{\ast}\paren{g_{i}\paren{x_{k}} + \metr[]{\text{grad}\,g_{i}\paren{x_{k}}}{\Delta x_{k}^{\ast}}} + \paren{\rho-\mu^{\ast}_{ki}}\metr[]{\text{grad}\,g_{i}\paren{x_{k}}}{\Delta x_{k}^{\ast}}\\
        &= \paren{\rho-\mu^{\ast}_{ki}}\metr[]{\text{grad}\,g_{i}\paren{x_{k}}}{\Delta x_{k}^{\ast}} \leq \paren{\mu^{\ast}_{ki}-\rho}g_{i}\paren{x_{k}}\\
        &\leq 0,
    \end{align*}
    where the first equality follows from Lemma~\ref{lemm:DDsigmacases} with $\zeta=0_{x_{k}}$ and $\xi = \Delta x_{k}^{\ast}$, the fifth equality holds by \eqref{subprobKKTcompl}, the first inequality follows from \eqref{subprobKKTineq} and the assumption $\rho \geq \mu_{ki}^{\ast}$, and the second inequality holds from the assumptions $g_{i}\paren{x_{k}} = g_{i}\circ R_{x_{k}}\paren{0_{x_{k}}} > 0$ and, again, $\rho \geq \mu_{ki}^{\ast}$.

    \item If $g_{i}\circ R_{x_{k}}\paren{0_{x_{k}}} = 0$ and $\paren{g_{i}\circ R_{x_{k}}}^{\prime}\paren{0_{x_{k}};\Delta x^{\ast}_{k}} \geq 0$, then we have 
    \begin{align*}
        \text{D}g_{i}\paren{x_{k}}\sbra{\Delta x_{k}^{\ast}} \allowbreak = \paren{g_{i}\circ R_{x_{k}}}^{\prime}\paren{0_{x_{k}};\Delta x^{\ast}_{k}} \geq 0.
    \end{align*}
    In addition, \eqref{def:grad} and \eqref{subprobKKTineq}, together with the assumption $g_{i}\paren{x_{k}} = g_{i}\circ R_{x_{k}}\paren{0_{x_{k}}} =0$, give 
    \begin{align*}
        \text{D}g_{i}\paren{x_{k}}\sbra{\Delta x_{k}^{\ast}} = \metr[]{\text{grad}\,g_{i}\paren{x_{k}}}{\Delta x_{k}^{\ast}} \leq 0.
    \end{align*}
    Therefore, the above two inequalities yield 
    \begin{align*}
        \text{D}g_{i}\paren{x_{k}}\sbra{\Delta x_{k}^{\ast}} = 0,
    \end{align*}
    which, together with Lemma~\ref{lemm:DDsigmacases} with $\zeta=0_{x_{k}}$ and $\xi = \Delta x_{k}^{\ast}$ and the assumption $g_{i}\paren{x_{k}} = 0$, implies
    \begin{align*}
        &\mu_{ki}^{\ast}g_{i}\paren{x_{k}} + \rho \sigma^{\prime}_{i x_{k}}\paren{0_{x_{k}};\Delta x^{\ast}_{k}}\\
        &=\mu_{ki}^{\ast}g_{i}\paren{x_{k}} + \rho \paren{g_{i}\circ R_{x_{k}}}^{\prime}\paren{0_{x_{k}};\Delta x^{\ast}_{k}}\\
        &=\mu_{ki}^{\ast}g_{i}\paren{x_{k}} + \rho \text{D}g_{i}\paren{x_{k}}\sbra{\Delta x_{k}^{\ast}}\\
        &= 0.
    \end{align*}

    \item Otherwise, if $g_{i}\circ R_{x_{k}}\paren{0_{x_{k}}} = g_{i}\paren{x_{k}} = 0$ and $\paren{g_{i}\circ R_{x_{k}}}^{\prime}\paren{0_{x_{k}};\Delta x^{\ast}_{k}} < 0$ hold simultaneously or $g_{i}\circ R_{x_{k}}\paren{0_{x_{k}}} = g_{i}\paren{x_{k}} < 0$ holds, then 
    \begin{align*}
        \mu_{ki}^{\ast}g_{i}\paren{x_{k}} + \rho  \sigma^{\prime}_{i x_{k}}\paren{0_{x_{k}};\Delta x^{\ast}_{k}} = \mu_{ki}^{\ast}g_{i}\paren{x_{k}} \leq 0,
    \end{align*}
    where the equality follows from Lemma~\ref{lemm:DDsigmacases} and the inequality holds under the assumption $g_{i}\paren{x_{k}}\leq 0$ and $\mu_{ki}^{\ast} \geq 0$ in \eqref{subprobKKTineq}.
    \end{enumerate}
\end{proof}

Next, we consider the one-sided directional derivative of $\tau_{x_{k}}$ at $0_{x_k}\in \tanspc[x_k]\mani$ along $\Delta x_k^{\ast}$.
\begin{lemma}\label{lemm:DDtau}
    Let $\Delta x_{k}^{\ast}$ be a KKT point satisfying \eqref{subprobKKT}. Then,
    \begin{align*}
        \tau_{x_{k}}^{\prime}\paren{0_{x_{k}};\Delta x^{\ast}_{k}} = - \sum_{j\in\mathcal{E}}\abs{h_{j}\paren{x_{k}}}.
    \end{align*}
\end{lemma}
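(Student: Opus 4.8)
The plan is to reduce the claim to the single-index quantity $\abs{h_{j}\circ R_{x_{k}}}^{\prime}\paren{0_{x_{k}};\Delta x^{\ast}_{k}}$ for each $j\in\mathcal{E}$ and then sum over $\mathcal{E}$. First I would record the chain-rule identity already exploited in Case~1 of the proof of Lemma~\ref{lemm:DDsigmaIneq}: by the retraction properties \eqref{def:retr} and the identification $\tanspc[0_{x_{k}}]\paren{\tanspc[x_{k}]\mani}\simeq\tanspc[x_{k}]\mani$, one has
\begin{align*}
    \paren{h_{j}\circ R_{x_{k}}}^{\prime}\paren{0_{x_{k}};\Delta x^{\ast}_{k}} = \text{D}h_{j}\paren{x_{k}}\sbra{\Delta x_{k}^{\ast}} = \metr[]{\text{grad}\,h_{j}\paren{x_{k}}}{\Delta x_{k}^{\ast}}.
\end{align*}
Invoking the equality-constraint KKT condition \eqref{subprobKKTeq} then rewrites the right-hand side as $-h_{j}\paren{x_{k}}$, so that $\paren{h_{j}\circ R_{x_{k}}}^{\prime}\paren{0_{x_{k}};\Delta x^{\ast}_{k}} = -h_{j}\paren{x_{k}}$ for every $j\in\mathcal{E}$.

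Next I would compute $\abs{h_{j}\circ R_{x_{k}}}^{\prime}\paren{0_{x_{k}};\Delta x^{\ast}_{k}}$ by distinguishing the sign of $h_{j}\paren{x_{k}} = h_{j}\circ R_{x_{k}}\paren{0_{x_{k}}}$. If $h_{j}\paren{x_{k}}>0$ (resp.\ $<0$), then $\abs{h_{j}\circ R_{x_{k}}}$ coincides with $h_{j}\circ R_{x_{k}}$ (resp.\ $-h_{j}\circ R_{x_{k}}$) on a sufficiently small ball around $0_{x_{k}}$, so its one-sided derivative equals $-h_{j}\paren{x_{k}}$ (resp.\ $h_{j}\paren{x_{k}}$); in either situation this is exactly $-\abs{h_{j}\paren{x_{k}}}$. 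The remaining case $h_{j}\paren{x_{k}}=0$ is the delicate one, since the absolute value is nonsmooth at the relevant point. Here, however, the identity above gives $\paren{h_{j}\circ R_{x_{k}}}^{\prime}\paren{0_{x_{k}};\Delta x^{\ast}_{k}}=0$, hence $h_{j}\circ R_{x_{k}}\paren{t\Delta x^{\ast}_{k}}=o\paren{t}$ as $t\downarrow 0$; dividing its absolute value by $t$ and passing to the limit yields $0=-\abs{h_{j}\paren{x_{k}}}$. Thus in all three cases $\abs{h_{j}\circ R_{x_{k}}}^{\prime}\paren{0_{x_{k}};\Delta x^{\ast}_{k}}=-\abs{h_{j}\paren{x_{k}}}$.

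Finally I would assemble the statement. Since each summand $\abs{h_{j}\circ R_{x_{k}}}$ admits a one-sided directional derivative at $0_{x_{k}}$ along $\Delta x^{\ast}_{k}$, the one-sided directional derivative of the finite sum $\tau_{x_{k}}$ is the sum of the individual ones, whence
\begin{align*}
    \tau_{x_{k}}^{\prime}\paren{0_{x_{k}};\Delta x^{\ast}_{k}} = \sum_{j\in\mathcal{E}}\abs{h_{j}\circ R_{x_{k}}}^{\prime}\paren{0_{x_{k}};\Delta x^{\ast}_{k}} = -\sum_{j\in\mathcal{E}}\abs{h_{j}\paren{x_{k}}},
\end{align*}
which is the desired equality. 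The main obstacle is precisely the $h_{j}\paren{x_{k}}=0$ case, where one cannot differentiate naively through the absolute value; the fact that rescues it is that the KKT equality \eqref{subprobKKTeq} forces the inner composite to vanish to first order, so its absolute value is $o\paren{t}$ and contributes zero, which matches $-\abs{h_{j}\paren{x_{k}}}=0$.
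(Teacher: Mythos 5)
Your proposal is correct and follows essentially the same route as the paper's proof: the same reduction to a per-index computation, the same use of the retraction properties \eqref{def:retr} together with the KKT equality \eqref{subprobKKTeq} to get $\paren{h_{j}\circ R_{x_{k}}}^{\prime}\paren{0_{x_{k}};\Delta x^{\ast}_{k}}=-h_{j}\paren{x_{k}}$, and the same three-case analysis on the sign of $h_{j}\paren{x_{k}}$. The only cosmetic difference is in the case $h_{j}\paren{x_{k}}=0$, where the paper passes the limit through $\max\brc{\cdot,\cdot}$ to obtain $\abs{\text{D}h_{j}\paren{x_{k}}\sbra{\Delta x^{\ast}_{k}}}=0$ while you argue via the equivalent $o\paren{t}$ estimate.
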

\begin{proof}
    Recall that $\tau_{x_{k}}\paren{0_{x_{k}}} = \sum_{j\in\mathcal{E}}\abs{h_{j}\circ R_{x_{k}}\paren{0_{x_{k}}}}$ by definition. Choose $j\in\mathcal{E}$ arbitrarily. Similarly to Lemma~\ref{lemm:DDsigmacases}, we consider the following three cases.

    \begin{enumerate}[({\bf\casecom} 1)]
    \item If $h_{j}\paren{x_{k}} = h_{j}\circ R_{x_{k}}\paren{0_{x_{k}}} > 0$, then by taking sufficiently small $c>0$, we obtain $\allowbreak \abs{h_{j} \circ R_{x_{k}}\paren{\chi}} \allowbreak = h_{j} \circ R_{x_{k}}\paren{\chi}$ for all $\chi\in\tanspc[x_{k}]\mani$ such that $\norm{\chi}\leq c$. Hence, under $\allowbreak \tanspc[0_{x_{k}}]\paren{\tanspc[x_{k}]\mani} \allowbreak \simeq \tanspc[x_{k}]\mani$, the directional derivative of $\abs{h_{j}\circ R_{x_{k}}}:\tanspc[x_{k}]\mani\rightarrow\eucli[]$ at $0_{x_{k}}$ along $\Delta x_{k}^{\ast}\in\tanspc[x_{k}]\mani$ is represented as
    \begin{align*}
        \paren{\abs{h_{j}\circ R_{x_{k}}}}^{\prime}\paren{0_{x_{k}};\Delta x^{\ast}_{k}} 
        &=\paren{h_{j}\circ R_{x_{k}}}^{\prime}\paren{0_{x_{k}};\Delta x^{\ast}_{k}}\\
        &=\text{D}h_{j}\paren{R_{x_{k}}\paren{0_{x_{k}}}}\sbra{\text{D}R_{x}\paren{0_{x}}\sbra{\Delta x_{k}^{\ast}}}\\
        &= \text{D}h_{j}\paren{x_{k}}\sbra{\Delta x_{k}^{\ast}}\\
        &= -h_{j}\paren{x_{k}}\\
        &= -\abs{h_{j}\paren{x_{k}}},
    \end{align*}
    where the third equality holds by \eqref{def:retr} and fourth one by \eqref{subprobKKTeq}.

    \item If $h_{j}\paren{x_{k}} = h_{j}\circ R_{x_{k}}\paren{0_{x_{k}}} = 0$, then it follows from the definition of the one-sided derivative that 
    {\small
    \begin{align*}
        &\paren{\abs{h_{j}\circ R_{x_{k}}}}^{\prime}\paren{0_{x_{k}};\Delta x^{\ast}_{k}}\\
        &= \lim_{t \downarrow 0}\frac{\max\brc{h_{j}\circ R_{x_{k}}\paren{t\Delta x_{k}^{\ast}}, -h_{j}\circ R_{x_{k}}\paren{t\Delta x_{k}^{\ast}}}-0}{t}\\
        &= \max\brc{\lim_{t\downarrow 0}\frac{ h_{j}\circ R_{x_{k}}\paren{t\Delta x_{k}^{\ast}} - h_{j}\circ R_{x_{k}}\paren{0_{x_{k}}}}{t}, \lim_{t\downarrow 0}\frac{-\paren{h_{j}\circ R_{x_{k}}\paren{t\Delta x_{k}^{\ast}} - h_{j}\circ R_{x_{k}}\paren{0_{x_{k}}}}}{t}}\\
        &=\abs{ \paren{h_{j}\circ R_{x_{k}}}^{\prime}\paren{0_{x_{k}};\Delta x^{\ast}_{k}} }\\
        &=\abs{\text{D}h_{j}\paren{x_{k}}\sbra{\Delta x^{\ast}_{k}}}\\
        &=\abs{-h_{j}\paren{x_{k}}}\\
        &= 0.
    \end{align*}
    }
    
    \item Otherwise, if $h_{j}\paren{x_{k}} = h_{j}\circ R_{x_{k}}\paren{0_{x_{k}}} < 0$, then considering a sufficiently small neighborhood around $0_{x_{k}}$ in the same way as in the first case, we have 
    \begin{align*}
        \paren{\abs{h_{j}\circ R_{x_{k}}}}^{\prime}\paren{0_{x_{k}};\Delta x^{\ast}_{k}} = -\abs{h_{j}\paren{x_{k}}}.
    \end{align*}
    \end{enumerate}
\end{proof}

Combining the preceding lemmas, we obtain an upper bound on $\paren{P_{\rho}\circ R_{x_{k}}}^{\prime}\paren{0_{x_{k}};\Delta x_{k}^{\ast}}$.
\begin{proposition}\label{prop:meritineq}
    Let $\paren{\Delta x_{k}^{\ast},\mu_{k}^{\ast},\lambda_{k}^{\ast}}$ be a KKT triplet of \eqref{QP}, i.e., $\paren{\Delta x_{k}^{\ast},\mu_{k}^{\ast},\lambda_{k}^{\ast}}$ satisfies \eqref{subprobKKT} at $x_{k}$ with the symmetric positive-definite operator $B_{k}:\tanspc[x_{k}]\mani\rightarrow\tanspc[x_{k}]\mani$. If $\rho \geq \max\paren{\max_{i\in\mathcal{I}}\mu_{ki}^{\ast},\max_{j\in\mathcal{E}} \abs{\lambda_{kj}^{\ast}}}$, then
    \begin{align*}
        \paren{P_{\rho}\circ R_{x_{k}}}^{\prime}\paren{0_{x_{k}};\Delta x_{k}^{\ast}} \leq - \metr[]{B_{k}\sbra{\Delta x^{\ast}_{k}}}{\Delta x^{\ast}_{k}}.
    \end{align*}
\end{proposition}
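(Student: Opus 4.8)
The plan is to compute the one-sided directional derivative of the composite merit function $P_{\rho}\circ R_{x_{k}}$ at $0_{x_{k}}$ along $\Delta x_{k}^{\ast}$ by splitting it into three contributions---one from the objective $f$, one from the inequality terms $\sigma_{i x_{k}}$, and one from the equality terms $\tau_{x_{k}}$---and then to recombine these pieces via the subproblem stationarity \eqref{subprobKKTLagsimple}. Since $P_{\rho}\circ R_{x_{k}}\paren{\zeta} = f\circ R_{x_{k}}\paren{\zeta} + \rho\paren{\sum_{i\in\mathcal{I}}\sigma_{i x_{k}}\paren{\zeta} + \tau_{x_{k}}\paren{\zeta}}$ and each summand admits a one-sided directional derivative at $0_{x_{k}}$ along $\Delta x_{k}^{\ast}$---the smooth term trivially and the nonsmooth terms by Lemmas~\ref{lemm:DDsigmaIneq} and \ref{lemm:DDtau}---the one-sided directional derivative of the sum equals the sum of those of the summands. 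I would therefore first record the three pieces separately.

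For the smooth term, I would use the retraction properties \eqref{def:retr}, i.e. $R_{x_{k}}\paren{0_{x_{k}}} = x_{k}$ and $\text{D}R_{x_{k}}\paren{0_{x_{k}}} = \text{id}$, together with the chain rule and the gradient identity \eqref{def:grad}, to obtain $\paren{f\circ R_{x_{k}}}^{\prime}\paren{0_{x_{k}};\Delta x_{k}^{\ast}} = \metr[]{\text{grad}\,f\paren{x_{k}}}{\Delta x_{k}^{\ast}}$. For the equality term, Lemma~\ref{lemm:DDtau} gives directly $\tau_{x_{k}}^{\prime}\paren{0_{x_{k}};\Delta x_{k}^{\ast}} = -\sum_{j\in\mathcal{E}}\abs{h_{j}\paren{x_{k}}}$. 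For the inequality term, Lemma~\ref{lemm:DDsigmaIneq}, applicable because $\rho\geq\mu_{ki}^{\ast}$ by hypothesis, yields $\rho\,\sigma_{i x_{k}}^{\prime}\paren{0_{x_{k}};\Delta x_{k}^{\ast}}\leq-\mu_{ki}^{\ast}g_{i}\paren{x_{k}}$ for every $i\in\mathcal{I}$. Summing these three contributions produces
\[
    \paren{P_{\rho}\circ R_{x_{k}}}^{\prime}\paren{0_{x_{k}};\Delta x_{k}^{\ast}} \leq \metr[]{\text{grad}\,f\paren{x_{k}}}{\Delta x_{k}^{\ast}} - \sum_{i\in\mathcal{I}}\mu_{ki}^{\ast}g_{i}\paren{x_{k}} - \rho\sum_{j\in\mathcal{E}}\abs{h_{j}\paren{x_{k}}}.
\]

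The next step is to remove the $\metr[]{\text{grad}\,f\paren{x_{k}}}{\Delta x_{k}^{\ast}}$ term by taking the inner product of the subproblem stationarity condition \eqref{subprobKKTLagsimple} with $\Delta x_{k}^{\ast}$, which expresses it as $-\metr[]{B_{k}\sbra{\Delta x_{k}^{\ast}}}{\Delta x_{k}^{\ast}}$ minus the multiplier-weighted sums of $\metr[]{\text{grad}\,g_{i}\paren{x_{k}}}{\Delta x_{k}^{\ast}}$ and $\metr[]{\text{grad}\,h_{j}\paren{x_{k}}}{\Delta x_{k}^{\ast}}$. I would then simplify these weighted sums using the subproblem complementarity \eqref{subprobKKTcompl}, giving $\mu_{ki}^{\ast}\metr[]{\text{grad}\,g_{i}\paren{x_{k}}}{\Delta x_{k}^{\ast}} = -\mu_{ki}^{\ast}g_{i}\paren{x_{k}}$, and the linearized feasibility \eqref{subprobKKTeq}, giving $\metr[]{\text{grad}\,h_{j}\paren{x_{k}}}{\Delta x_{k}^{\ast}} = -h_{j}\paren{x_{k}}$. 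Substituting these, the inequality-related sums cancel exactly, and the bound collapses to
\[
    \paren{P_{\rho}\circ R_{x_{k}}}^{\prime}\paren{0_{x_{k}};\Delta x_{k}^{\ast}} \leq -\metr[]{B_{k}\sbra{\Delta x_{k}^{\ast}}}{\Delta x_{k}^{\ast}} + \sum_{j\in\mathcal{E}}\lambda_{kj}^{\ast}h_{j}\paren{x_{k}} - \rho\sum_{j\in\mathcal{E}}\abs{h_{j}\paren{x_{k}}}.
\]

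Finally I would invoke the hypothesis $\rho\geq\max_{j\in\mathcal{E}}\abs{\lambda_{kj}^{\ast}}$ to estimate $\lambda_{kj}^{\ast}h_{j}\paren{x_{k}}\leq\abs{\lambda_{kj}^{\ast}}\abs{h_{j}\paren{x_{k}}}\leq\rho\abs{h_{j}\paren{x_{k}}}$ termwise, so that the two equality-related sums combine to a nonpositive quantity and only $-\metr[]{B_{k}\sbra{\Delta x_{k}^{\ast}}}{\Delta x_{k}^{\ast}}$ survives, which is the claimed bound. I do not anticipate a serious obstacle here: all the analytic difficulty peculiar to the nonsmoothness of the merit function has been absorbed into Lemmas~\ref{lemm:DDsigmaIneq} and \ref{lemm:DDtau}, and what remains is careful bookkeeping of the KKT identities. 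The only point deserving an explicit line of justification is the additivity of one-sided directional derivatives over the sum defining $P_{\rho}\circ R_{x_{k}}$, which is valid precisely because each constituent one-sided derivative exists.
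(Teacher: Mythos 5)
Your proposal is correct and follows essentially the same route as the paper's proof: both arguments combine Lemmas~\ref{lemm:DDsigmaIneq} and \ref{lemm:DDtau} with the subproblem KKT identities \eqref{subprobKKTLagsimple}, \eqref{subprobKKTcompl}, and \eqref{subprobKKTeq}, and finish with the bound $\lambda_{kj}^{\ast}h_{j}\paren{x_{k}}\leq\rho\abs{h_{j}\paren{x_{k}}}$. The only difference is bookkeeping order---the paper first isolates $\mathrm{D}f\paren{x_{k}}\sbra{\Delta x_{k}^{\ast}}$ via \eqref{diffofobjeq} and applies Lemma~\ref{lemm:DDsigmaIneq} in the combined form $\mu_{ki}^{\ast}g_{i}\paren{x_{k}}+\rho\sigma_{ix_{k}}^{\prime}\leq 0$ at the end, whereas you apply the lemma termwise up front and let the $\mu_{ki}^{\ast}g_{i}\paren{x_{k}}$ terms cancel---which is immaterial.
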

\begin{proof}
    First, let us show 
    \begin{align}\label{diffofobjeq}
        \text{D}f\paren{x_{k}}\sbra{\Delta x_{k}^{\ast}} = - \metr[]{B_{k}\sbra{\Delta x^{\ast}_{k}}}{\Delta x^{\ast}_{k}} + \sum_{i\in\mathcal{I}}\mu_{ki}^{\ast}g_{i}\paren{x_{k}} + \sum_{j\in\mathcal{E}}\lambda_{kj}^{\ast}h_{j}\paren{x_{k}}.
    \end{align}
    From \eqref{subprobKKTLagsimple}, it holds that 
    \begin{align*}
        \text{D}f\paren{x_{k}}\sbra{\Delta x_{k}^{\ast}} =  -\metr[]{B_{k}\sbra{\Delta x_{k}^{\ast}}}{\Delta x_{k}^{\ast}} - \sum_{i\in\mathcal{I}}\mu^{\ast}_{ki}\text{D}g_{i}\paren{x_{k}}\sbra{\Delta x_{k}^{\ast}} - \sum_{j\in\mathcal{E}} \lambda_{kj}^{\ast}\text{D}h_{j}\paren{x_{k}}\sbra{\Delta x_{k}^{\ast}},
    \end{align*}
    which, together with the two equations 
    \begin{align*}
        \sum_{i\in\mathcal{I}}\mu_{ki}^{\ast}\text{D}g_{i}\paren{x_{k}}\sbra{\Delta x_{k}^{\ast}} \allowbreak = \allowbreak - \allowbreak \sum_{i\in\mathcal{I}}\mu_{ki}^{\ast}g_{i}\paren{x_{k}} \text{ and } \sum_{j\in\mathcal{E}} \lambda_{kj}^{\ast}\text{D}h_{j}\paren{x_{k}}\sbra{\Delta x_{k}^{\ast}} = \allowbreak -\sum_{j\in\mathcal{E}} \lambda_{kj}^{\ast} h_{j}\paren{x_{k}}
    \end{align*}
    from \eqref{subprobKKTcompl} and \eqref{subprobKKTeq}, implies \eqref{diffofobjeq}.

    Under $\tanspc[0_{x_{k}}]\paren{\tanspc[x_{k}]\mani}\simeq\tanspc[x_{k}]\mani$, we have
    \begin{align*}
        &\paren{P_{\rho}\circ R_{x_{k}}}^{\prime}\paren{0_{x_{k}};\Delta x_{k}^{\ast}}\\
        &= \text{D}f\paren{x_{k}}\sbra{\Delta x_{k}^{\ast}} + \rho \paren{\sum_{i\in\mathcal{I}} \sigma_{i x_{k}}^{\prime}\paren{0_{x_{k}};\Delta x_{k}^{\ast}} + \tau_{x_{k}}^{\prime}\paren{0_{x_{k}};\Delta x_{k}^{\ast}}}\\
        &= - \metr[]{B_{k}\sbra{\Delta x^{\ast}_{k}}}{\Delta x^{\ast}_{k}}\\
        &\qquad +\sum_{i\in\mathcal{I}}\mu_{ki}^{\ast}g_{i}\paren{x_{k}} + \sum_{j\in\mathcal{E}}\lambda_{kj}^{\ast}h_{j}\paren{x_{k}} + \rho \paren{\sum_{i\in\mathcal{I}}  \sigma_{i x_{k}}^{\prime}\paren{0_{x_{k}};\Delta x_{k}^{\ast}} - \sum_{j\in\mathcal{E}}\abs{h_{j}\paren{x_{k}}}}\\
        &\leq - \metr[]{B_{k}\sbra{\Delta x^{\ast}_{k}}}{\Delta x^{\ast}_{k}} + \sum_{j\in\mathcal{E}}\paren{\abs{\lambda_{kj}^{\ast}}-\rho}\abs{h_{j}\paren{x_{k}}} + \sum_{i\in\mathcal{I}}\mu_{ki}^{\ast}g_{i}\paren{x_{k}} + \rho\sum_{i\in\mathcal{I}}  \sigma_{i x_{k}}^{\prime}\paren{0_{x_{k}};\Delta x_{k}^{\ast}}\\
        &\leq  - \metr[]{B_{k}\sbra{\Delta x^{\ast}_{k}}}{\Delta x^{\ast}_{k}},
    \end{align*}
    where the second equality follows from \eqref{diffofobjeq} and Lemma~\ref{lemm:DDtau} and the second inequality follows from Lemma~\ref{lemm:DDsigmaIneq} and the assumption that $\abs{\lambda_{kj}^{\ast}} \leq \rho$ for all $i\in\mathcal{I}$.
\end{proof}
    
From the proposition and the positive-definiteness of $B_k$, if $\norm{\Delta x_{k}^{\ast}} \neq 0$, then we have 
\begin{align*}
    \paren{P_{\rho}\circ R_{x_{k}}}^{\prime}\paren{0_{x_{k}};\Delta x_{k}^{\ast}} \allowbreak \leq - \metr[]{B_{k}\sbra{\Delta x^{\ast}_{k}}}{\Delta x^{\ast}_{k}} < 0,
\end{align*}
which implies that $\Delta x_{k}^{\ast}$ is a descent direction for the merit function $P_{\rho}\circ R_{x_k}\paren{\cdot}$ for sufficiently large $\rho$.

\begin{remark}\label{armijowelldefined}
    Suppose that $\norm{\Delta x_{k}^{\ast}} \neq 0$. Then, we can always determine the step length $\alpha_{k}$ in $\RSQO$. Indeed, from Proposition~\ref{prop:meritineq}, we have
    \begin{align*}
        &P_{\rho}\paren{x_{k}}-P_{\rho}\circ R_{x_{k}}\paren{t\Delta x_{k}^{\ast}} - \gamma t\metr[]{B_{k}\sbra{\Delta x_{k}^{\ast}}}{\Delta x_{k}^{\ast}}\\
        &=P_{\rho}\circ R_{x_{k}}\paren{0_{x_{k}}}-P_{\rho}\circ R_{x_{k}}\paren{t\Delta x_{k}^{\ast}} - \gamma t\metr[]{B_{k}\sbra{\Delta x_{k}^{\ast}}}{\Delta x_{k}^{\ast}}\\
        &=-t \paren{P_{\rho}\circ R_{x_{k}}}^{\prime}\paren{0_{x_{k}};\Delta x_{k}^{\ast}} - \gamma t\metr[]{B_{k}\sbra{\Delta x_{k}^{\ast}}}{\Delta x_{k}^{\ast}} + o\paren{t}\\
        &\geq t\paren{1-\gamma}\metr[]{B_{k}\sbra{\Delta x_{k}^{\ast}}}{\Delta x_{k}^{\ast}} + o\paren{t}
    \end{align*}
    for $t>0$. Hence, because $1>\gamma>0$, the left-hand side is positive for any sufficiently small $t>0$. This ensures the existence of $r$ satisfying the backtracking line search 
    \eqref{Armijorule} at each iteration of $\RSQO$, and hence, we can always find such $r$ within finitely many trials.
\end{remark}

Before moving on to the global convergence theorem, we give one more proposition, which becomes a crucial ingredient for proving the theorem. To prove the proposition, we need to introduce more concepts and terminologies, such as Clarke regularity, and to prove some lemmas. We defer them and the proof of the proposition to Appendix~\ref{appendix:ExistenceofLimDeltaXBandProperties} for the sake of readability.
\begin{proposition}\label{LimBIneqLimP}
    Suppose that Assumptions A\ref{subprobfeasi}, A\ref{Bkbounded} and A\ref{genebound} hold. Define $\tilde{k}_{1}$ and $\bar{\rho}$ as in Proposition~\ref{prop:ensurebarrho}. Let $\mathcal{K}$ be any subsequence such that $\brc{\paren{x_{k}, \mu_{k}, \lambda_{k}, \alpha_{k}}}_{k\in\mathcal{K}}$ converges to $\paren{x^{\ast}, \mu^{\ast}, \lambda^{\ast}, 0}$.
    Then it holds that
    \begin{align}
        \limsup_{k\in\mathcal{K}, k\rightarrow\infty}\metr[x_{k}]{B_{k}\sbra{\Delta x^{\ast}_{k}}}{\Delta x^{\ast}_{k}} \leq \limsup_{k\in\mathcal{K}, k\to\infty}\frac{\beta}{\alpha_{k}} \paren{P_{\bar{\rho}}\circ R_{x_{k}}\paren{0_{x_k}}- P_{\bar{\rho}}\circ R_{x_{k}}\paren{\frac{\alpha_{k}}{\beta} \Delta x^{\ast}_{k}}}. \label{eq:key2}
    \end{align}
\end{proposition}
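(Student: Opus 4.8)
The plan is to pass to a subsequence along which every relevant quantity converges, transfer the tangent‑space data to $\tanspc[x^{\ast}]\mani$ by parallel transport, and then reduce the claimed inequality to an application of Proposition~\ref{prop:meritineq} at the limit point $x^{\ast}$ together with the upper semicontinuity of the Clarke directional derivative. Writing $t_{k}\coloneqq\alpha_{k}/\beta$ (so $t_{k}\downarrow 0$ because $\alpha_{k}\to 0$ and $\beta/\alpha_{k}=1/t_{k}$) and $F_{k}\coloneqq P_{\bar{\rho}}\circ R_{x_{k}}$, the summand on the right is $b_{k}\coloneqq\frac{1}{t_{k}}\paren{F_{k}(0_{x_{k}})-F_{k}(t_{k}\Delta x_{k}^{\ast})}$ and that on the left is $a_{k}\coloneqq\metr[x_{k}]{B_{k}\sbra{\Delta x_{k}^{\ast}}}{\Delta x_{k}^{\ast}}$; the goal is $\limsup_{k\in\mathcal{K}}a_{k}\le\limsup_{k\in\mathcal{K}}b_{k}$. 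First I would pick $\mathcal{K}'\subseteq\mathcal{K}$ along which $a_{k}$ attains $\limsup_{\mathcal{K}}a_{k}$. Using Proposition~\ref{DeltaxkBounded} (boundedness of $\brc{\norm{\Delta x_{k}^{\ast}}}$), Assumption~A\ref{Bkbounded}, and the boundedness of $\brc{\mu_{k}^{\ast}},\brc{\lambda_{k}^{\ast}}$ inherited from Assumption~A\ref{genebound} via the update $\mu_{k+1}=\mu_{k}^{\ast},\lambda_{k+1}=\lambda_{k}^{\ast}$, I would refine $\mathcal{K}'$ so that, after parallel transport to $\tanspc[x^{\ast}]\mani$ along the minimizing geodesics from $x_{k}$ to $x^{\ast}$ (well defined for $k$ large), $\Delta x_{k}^{\ast}\to\Delta x^{\ast}$, the conjugated operators of $B_{k}$ converge to a symmetric $B^{\ast}$, while $\mu_{k}^{\ast}\to\bar{\mu}$, $\lambda_{k}^{\ast}\to\bar{\lambda}$ and $b_{k}\to B$ (the last being bounded by local Lipschitzness, as seen below).

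Next I would identify the limit as a subproblem KKT triplet at $x^{\ast}$. Since $\text{grad}\,f,\text{grad}\,g_{i},\text{grad}\,h_{j}$ and the values $g_{i},h_{j}$ are continuous and parallel transport is a linear isometry, passing \eqref{subprobKKT} to the limit shows that $\paren{\Delta x^{\ast},\bar{\mu},\bar{\lambda}}$ satisfies \eqref{subprobKKT} at $x^{\ast}$ with $B^{\ast}$, which is symmetric positive‑definite because the bounds $m\norm{\xi}^{2}\le\metr[x^{\ast}]{B^{\ast}\sbra{\xi}}{\xi}\le M\norm{\xi}^{2}$ survive the limit under Assumption~A\ref{Bkbounded}. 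The isometry property also gives $a_{k}\to\metr[x^{\ast}]{B^{\ast}\sbra{\Delta x^{\ast}}}{\Delta x^{\ast}}\eqqcolon A$, so that $\limsup_{\mathcal{K}}a_{k}=A$. Moreover the update rule \eqref{updaterho} yields $\bar{\rho}\ge\max\brc{\max_{i}\mu_{ki}^{\ast},\max_{j}\abs{\lambda_{kj}^{\ast}}}$ for all large $k$, hence in the limit $\bar{\rho}\ge\max\brc{\max_{i}\bar{\mu}_{i},\max_{j}\abs{\bar{\lambda}_{j}}}$. Proposition~\ref{prop:meritineq} applied at $x^{\ast}$ with this triplet and $\rho=\bar{\rho}$ then delivers $\paren{P_{\bar{\rho}}\circ R_{x^{\ast}}}^{\prime}\paren{0_{x^{\ast}};\Delta x^{\ast}}\le -A$.

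For the right‑hand side I would introduce the chart furnished by the exponential map at $x^{\ast}$ and set $\Psi\coloneqq P_{\bar{\rho}}\circ\exp_{x^{\ast}}$, which is locally Lipschitz. Writing $\hat{u}_{k}\coloneqq\exp_{x^{\ast}}^{-1}(x_{k})$ and $\hat{v}_{k}\coloneqq\exp_{x^{\ast}}^{-1}\paren{R_{x_{k}}(t_{k}\Delta x_{k}^{\ast})}$, both tending to $0$, a first‑order expansion of the smooth map $(y,\xi)\mapsto\exp_{x^{\ast}}^{-1}(R_{y}(\xi))$ at $(x_{k},0)$, using $R_{x_{k}}(0_{x_{k}})=x_{k}$ and $\text{D}R_{x_{k}}(0_{x_{k}})=\text{id}$ from \eqref{def:retr}, gives $\hat{v}_{k}=\hat{u}_{k}+t_{k}w_{k}$ with $w_{k}=\text{D}\exp_{x^{\ast}}^{-1}(x_{k})\sbra{\Delta x_{k}^{\ast}}+o(1)$; since $\text{D}\exp_{x^{\ast}}^{-1}(x_{k})$ and parallel transport both tend to the identity as $x_{k}\to x^{\ast}$ and $\brc{\Delta x_{k}^{\ast}}$ is bounded, $w_{k}\to\Delta x^{\ast}$. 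Hence $-b_{k}=\frac{1}{t_{k}}\paren{\Psi(\hat{u}_{k}+t_{k}w_{k})-\Psi(\hat{u}_{k})}$, and splitting off the fixed direction $\Delta x^{\ast}$ and bounding the remainder by the local Lipschitz constant times $\norm{w_{k}-\Delta x^{\ast}}$ yields $\limsup_{k}(-b_{k})\le\Psi^{\circ}(0;\Delta x^{\ast})$, the Clarke directional derivative. Finally, Clarke regularity of $\Psi$ — holding because $P_{\bar{\rho}}$ sums the smooth $f$ with the regular functions $\max\brc{0,g_{i}}$ and $\abs{h_{j}}$ and is composed with the smooth $\exp_{x^{\ast}}$ — gives $\Psi^{\circ}(0;\Delta x^{\ast})=\Psi^{\prime}(0;\Delta x^{\ast})=\paren{P_{\bar{\rho}}\circ R_{x^{\ast}}}^{\prime}\paren{0_{x^{\ast}};\Delta x^{\ast}}$, the last equality following from $\exp_{x^{\ast}}$ and $R_{x^{\ast}}$ agreeing to first order together with the local Lipschitz continuity of $P_{\bar{\rho}}$. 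Chaining these facts gives $B\ge -\Psi^{\circ}(0;\Delta x^{\ast})\ge A$, and since $B\le\limsup_{\mathcal{K}}b_{k}$ we conclude $\limsup_{\mathcal{K}}a_{k}=A\le\limsup_{\mathcal{K}}b_{k}$.

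The main obstacle is the third paragraph: rigorously controlling the joint limit in which the base point $x_{k}$, the step $t_{k}$, and the direction $\Delta x_{k}^{\ast}$ all move simultaneously while the underlying tangent spaces $\tanspc[x_{k}]\mani$ themselves vary. This is precisely what forces the use of the exponential chart to place every difference quotient on the single space $\tanspc[x^{\ast}]\mani$, the geometric estimate $w_{k}\to\Delta x^{\ast}$ that links the retraction at $x_{k}$, parallel transport, and the exponential chart at $x^{\ast}$, and the nonsmooth machinery (Clarke regularity plus upper semicontinuity of $\Psi^{\circ}$) that is needed because the merit function is only directionally differentiable; these are exactly the ingredients the paper isolates in the deferred lemmas of Appendix~\ref{appendix:ExistenceofLimDeltaXBandProperties}.
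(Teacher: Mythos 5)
Your proof is correct in substance and reaches the paper's inequality, but the mechanism you use for the key limiting step differs from the paper's in an interesting way. Both arguments share the same skeleton: transport $B_{k}$ and $\Delta x_{k}^{\ast}$ to $\tanspc[x^{\ast}]\mani$, extract limits $\overline{B^{\ast}},\Delta x^{\ast}$, show the limit is a subproblem KKT triplet at $x^{\ast}$ (the paper's Lemma~\ref{existDxBastwithproperties}, using Lemma~\ref{lem:limParallelGrad}), and close with Proposition~\ref{prop:meritineq}. Where you diverge is in bounding $\limsup\paren{-b_{k}}$. The paper builds the two-variable function $F\paren{\xi\oplus\zeta}=P_{\bar{\rho}}\circ R_{\text{Exp}_{x^{\ast}}\paren{\xi}}\paren{\Pi_{x^{\ast}\rightarrow\text{Exp}_{x^{\ast}}\paren{\xi}}\sbra{\zeta}}$ on the fixed space $\tanspc[x^{\ast}]\mani^{2}$, so that $b_{k}$ is \emph{exactly} a difference of $F$-values; it then invokes the Lebourg mean-value theorem (Theorem~\ref{LebourgMeanValue}) together with the closed-graph property of the generalized gradient (Proposition~\ref{prop:genegradclosedmapping}) and Clarke regularity of $F$ (Lemma~\ref{lem:clark}) to pass to the limit. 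You instead absorb the moving retraction into a smooth first-order expansion, writing $R_{x_{k}}\paren{t_{k}\Delta x_{k}^{\ast}}=\text{Exp}_{x^{\ast}}\paren{\hat{u}_{k}+t_{k}w_{k}}$ with $w_{k}\rightarrow\Delta x^{\ast}$, and then bound the difference quotient of the one-variable composite $\Psi=P_{\bar{\rho}}\circ\text{Exp}_{x^{\ast}}$ directly by the \emph{definition} of the Clarke derivative $\Psi^{\circ}\paren{0;\Delta x^{\ast}}$, upgraded via Clarke regularity. This buys you a lighter nonsmooth toolkit (no mean-value theorem, no closedness of $\partial$), at the price of two geometric estimates the paper never needs: (i) the comparison $\text{D}\text{Exp}_{x^{\ast}}^{-1}\paren{x_{k}}-\Pi_{x_{k}\rightarrow x^{\ast}}\rightarrow 0$ in operator norm (true in normal coordinates, where both representations tend to the identity, but asserted rather than proved in your write-up, and requiring a remainder bound uniform in $k$), and (ii) the bridging identity $\Psi^{\prime}\paren{0;\Delta x^{\ast}}=\paren{P_{\bar{\rho}}\circ R_{x^{\ast}}}^{\prime}\paren{0_{x^{\ast}};\Delta x^{\ast}}$ via first-order agreement of $\text{Exp}_{x^{\ast}}$ and $R_{x^{\ast}}$ plus local Lipschitzness of $P_{\bar{\rho}}$ — a step that is automatic in the paper's setup, since $F\paren{v^{\ast}\oplus\plchold}$ literally equals $P_{\bar{\rho}}\circ R_{x^{\ast}}$ because $\text{Exp}_{x^{\ast}}\paren{0_{x^{\ast}}}=x^{\ast}$ and $\Pi_{x^{\ast}\rightarrow x^{\ast}}=\text{id}$.

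Two small points in your favor worth keeping: you made explicit both the penalty bound $\bar{\rho}\geq\max\brc{\max_{i}\bar{\mu}_{i},\max_{j}\abs{\bar{\lambda}_{j}}}$ inherited from the update rule~\eqref{updaterho} (the paper uses this tacitly when applying Proposition~\ref{prop:meritineq} at the limit), and the distinction between the limits $\paren{\bar{\mu},\bar{\lambda}}$ of $\brc{\paren{\mu_{k}^{\ast},\lambda_{k}^{\ast}}}$ and $\paren{\mu^{\ast},\lambda^{\ast}}$, which is immaterial to the conclusion but cleaner. To make your version fully rigorous you should supply a proof of estimate (i) above — e.g., via the coordinate representation of the parallel-transport ODE in normal coordinates at $x^{\ast}$, where the Christoffel symbols vanish at the origin — and note that the uniformity of the $O\paren{t_{k}^{2}}$ remainder in the Taylor expansion of $\paren{y,\xi}\mapsto\text{Exp}_{x^{\ast}}^{-1}\paren{R_{y}\paren{\xi}}$ follows from smoothness on a compact neighborhood together with the boundedness of $\brc{\norm{\Delta x_{k}^{\ast}}}$ from Proposition~\ref{DeltaxkBounded}.
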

\begin{proof}
    See Appendix~\ref{appendix:ExistenceofLimDeltaXBandProperties} for the proof. 
\end{proof}
In Euclidean SQO, that is, in the case of $\mani=\mathbb{R}^d$, inequality~\eqref{eq:key2} can be verified by analyzing the limiting behavior of the directional derivative of $P_{\bar{\rho}} \circ R_{x_{k}}\paren{\cdot} = P_{\bar{\rho}}\paren{x_{k} + \cdot }$ in $\mathbb{R}^d$ and using Proposition~\ref{prop:meritineq}. In the manifold setting, however, the analysis of inequality~\eqref{eq:key2} is more complicated because the function $P_{\bar{\rho}}\circ R_{x_k}\paren{\cdot}$ is defined over the space $\tanspc[x_k]\mani$ that varies depending on $k$. In Appendix~\ref{appendix:ExistenceofLimDeltaXBandProperties}, we exploit Proposition~\ref{prop:meritineq} combined with parallel transport and the exponential mapping, which are important concepts on Riemannian geometry.

Now we are ready to prove the global convergence of $\RSQO$.
\begin{theorem}\label{globconv}
    Suppose that Assumptions~A\ref{subprobfeasi}, A\ref{Bkbounded}, and A\ref{genebound} hold. Let $\brc{\paren{x_{k},\mu_{k},\lambda_{k}}}$ be a sequence generated by $\RSQO$, and $\paren{x^{\ast},\mu^{\ast},\lambda^{\ast}}$ be an arbitrarily chosen accumulation point. Then, $\paren{x^{\ast},\mu^{\ast},\lambda^{\ast}}$ satisfies the KKT conditions \eqref{KKT} of RNLO \eqref{RNLO}.
\end{theorem}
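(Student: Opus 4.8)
The plan is to follow the classical Euclidean SQO global convergence argument (in the spirit of Han~\cite{Han77GlobConvSQPwithExactPenalty}), adapting each step to the Riemannian setting by exploiting the descent property established in Proposition~\ref{prop:meritineq} together with the key limiting inequality of Proposition~\ref{LimBIneqLimP}. First I would invoke Proposition~\ref{prop:ensurebarrho} to fix the tail behavior of the penalty parameter: there exist $\tilde{k}_{1}$ and $\bar{\rho}$ with $\rho_{k}=\bar{\rho}$ for all $k\geq\tilde{k}_{1}$. For such $k$ the merit function $P_{\bar{\rho}}$ is constant in $\rho$, so by the backtracking rule~\eqref{Armijorule} the sequence $\brc{P_{\bar{\rho}}\paren{x_{k}}}$ is monotonically nonincreasing; combined with Assumption~A\ref{genebound} and continuity of $f,\brc{g_{i}},\brc{h_{j}}$, it is bounded below, hence convergent. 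Telescoping the Armijo inequality then gives $\sum_{k\geq\tilde{k}_{1}}\alpha_{k}\metr[x_{k}]{B_{k}\sbra{\Delta x_{k}^{\ast}}}{\Delta x_{k}^{\ast}}<\infty$, so $\alpha_{k}\metr[x_{k}]{B_{k}\sbra{\Delta x_{k}^{\ast}}}{\Delta x_{k}^{\ast}}\to 0$.

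The heart of the argument is to show that this forces $\norm{\Delta x_{k}^{\ast}}\to 0$ along any relevant subsequence. I would argue by contradiction: suppose there is a subsequence $\mathcal{K}$ with $\brc{\paren{x_{k},\mu_{k},\lambda_{k}}}_{k\in\mathcal{K}}\to\paren{x^{\ast},\mu^{\ast},\lambda^{\ast}}$ but $\norm{\Delta x_{k}^{\ast}}\not\to 0$, i.e.\ $\metr[x_{k}]{B_{k}\sbra{\Delta x_{k}^{\ast}}}{\Delta x_{k}^{\ast}}$ is bounded away from $0$ on $\mathcal{K}$ (using Assumption~A\ref{Bkbounded} to pass between $\norm{\Delta x_{k}^{\ast}}$ and $\metr[]{B_{k}\sbra{\Delta x_{k}^{\ast}}}{\Delta x_{k}^{\ast}}$). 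Since $\alpha_{k}\metr[x_{k}]{B_{k}\sbra{\Delta x_{k}^{\ast}}}{\Delta x_{k}^{\ast}}\to 0$, this implies $\alpha_{k}\to 0$ on $\mathcal{K}$ (after possibly refining to a further subsequence, which is legitimate because $\brc{\norm{\Delta x_{k}^{\ast}}}$ is bounded by Proposition~\ref{DeltaxkBounded}, so a convergent sub-subsequence of search directions exists). Now I would apply Proposition~\ref{LimBIneqLimP} to this subsequence: its right-hand side involves the step $\tfrac{\alpha_{k}}{\beta}\Delta x_{k}^{\ast}$, which is precisely the \emph{rejected} step one size larger than the accepted $\alpha_{k}$. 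Because $\alpha_{k}=\beta^{r}$ with $r$ minimal, the Armijo condition~\eqref{Armijorule} \emph{fails} at exponent $r-1$, i.e.\ $P_{\bar{\rho}}\paren{x_{k}}-P_{\bar{\rho}}\circ R_{x_{k}}\paren{\tfrac{\alpha_{k}}{\beta}\Delta x_{k}^{\ast}}<\gamma\tfrac{\alpha_{k}}{\beta}\metr[]{B_{k}\sbra{\Delta x_{k}^{\ast}}}{\Delta x_{k}^{\ast}}$. Plugging this into~\eqref{eq:key2} yields
\begin{align*}
    \limsup_{k\in\mathcal{K},\,k\to\infty}\metr[x_{k}]{B_{k}\sbra{\Delta x_{k}^{\ast}}}{\Delta x_{k}^{\ast}}
    \leq \gamma\limsup_{k\in\mathcal{K},\,k\to\infty}\metr[x_{k}]{B_{k}\sbra{\Delta x_{k}^{\ast}}}{\Delta x_{k}^{\ast}},
\end{align*}
and since $\gamma\in\paren{0,1}$ this forces $\metr[x_{k}]{B_{k}\sbra{\Delta x_{k}^{\ast}}}{\Delta x_{k}^{\ast}}\to 0$ on $\mathcal{K}$, contradicting the assumption. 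Hence $\norm{\Delta x_{k}^{\ast}}\to 0$ along $\mathcal{K}$.

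Once $\Delta x_{k}^{\ast}\to 0$ is secured, the KKT conditions follow by a continuity and limiting argument. Taking limits along $\mathcal{K}$ in the subproblem KKT system~\eqref{subprobKKT}: equation~\eqref{subprobKKTLagsimple} becomes $\text{grad}\,f\paren{x^{\ast}}+\sum_{i\in\mathcal{I}}\mu_{i}^{\ast}\text{grad}\,g_{i}\paren{x^{\ast}}+\sum_{j\in\mathcal{E}}\lambda_{j}^{\ast}\text{grad}\,h_{j}\paren{x^{\ast}}=0$ because $B_{k}\sbra{\Delta x_{k}^{\ast}}\to 0$ (its norm is controlled by $\norm{B_{k}}_{\rm op}\norm{\Delta x_{k}^{\ast}}$ via Lemma~\ref{AkOperNormBounded} and A\ref{Bkbounded}) and the gradients converge by continuity; here I would note that the multipliers $\paren{\mu_{k},\lambda_{k}}=\paren{\mu_{k-1}^{\ast},\lambda_{k-1}^{\ast}}$ converge to $\paren{\mu^{\ast},\lambda^{\ast}}$ by Assumption~A\ref{genebound}, giving~\eqref{KKTLag}. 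The primal feasibility~\eqref{KKTineq} and~\eqref{KKTeq} follow from~\eqref{subprobKKTineq} and~\eqref{subprobKKTeq} by letting $\metr[]{\text{grad}\,g_{i}\paren{x_{k}}}{\Delta x_{k}^{\ast}}\to 0$ and $\metr[]{\text{grad}\,h_{j}\paren{x_{k}}}{\Delta x_{k}^{\ast}}\to 0$ (Cauchy--Schwarz with $\Delta x_{k}^{\ast}\to 0$), while $\mu_{i}^{\ast}\geq 0$ survives the limit; complementarity~\eqref{KKTcompl} follows identically from~\eqref{subprobKKTcompl}. The main obstacle I anticipate is the contradiction step: the limiting inequality~\eqref{eq:key2} in Proposition~\ref{LimBIneqLimP} is exactly what makes the Riemannian analysis work, since naively comparing $P_{\bar{\rho}}\circ R_{x_{k}}$ across iterations is problematic because each is defined on a different tangent space $\tanspc[x_{k}]\mani$; the care already invested in that proposition (via parallel transport and the exponential map) is what lets the standard Euclidean rejected-step argument go through, so the remaining work here is chiefly bookkeeping with subsequences and continuity.
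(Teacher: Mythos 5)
Your proposal is correct and follows essentially the same route as the paper's own proof of Theorem~\ref{globconv}: fix $\bar{\rho}$ via Proposition~\ref{prop:ensurebarrho}, deduce $\alpha_{k}\metr[x_{k}]{B_{k}\sbra{\Delta x_{k}^{\ast}}}{\Delta x_{k}^{\ast}}\to 0$ from monotonicity and boundedness of the merit values, use the rejected-step Armijo inequality together with \eqref{eq:key2} of Proposition~\ref{LimBIneqLimP} to force $\norm{\Delta x_{k}^{\ast}}\to 0$ along the subsequence, and pass to the limit in the subproblem KKT system \eqref{subprobKKT} via Cauchy--Schwarz and Lemma~\ref{AkOperNormBounded}. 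The only cosmetic differences are that you telescope the Armijo inequality where the paper invokes the monotone convergence theorem (with Hopf--Rinow compactness supplying boundedness of $P_{\bar{\rho}}$ on the iterates), and that you fold the paper's two cases on $\lim_{k\in\mathcal{K}}\alpha_{k}$ into a single contradiction argument, which is equivalent.
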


\begin{proof}
    Without loss of generality, we assume $k\geq\tilde{k}_{1}$ and $\rho_{k} = \bar{\rho}$, where $\tilde{k}_{1}$ and $\bar{\rho}$ are defined in Proposition~\ref{prop:ensurebarrho}. It follows that $\brc{P_{\bar{\rho}}\paren{x_{k}}}$ is monotonically nonincreasing.
    Indeed, by the backtracking line search
    \eqref{Armijorule} and positive-definiteness of $B_{k}$, we have
    \begin{align}\label{Armijoonestepbefore}
        P_{\bar{\rho}} \paren{x_{k}} - P_{\bar{\rho}}\paren{x_{k+1}} \geq \gamma \alpha_{k} \metr[x_{k}]{B_{k}\sbra{\Delta x^{\ast}_{k}}}{\Delta x^{\ast}_{k}} \geq 0.
    \end{align}
    By Assumption A\ref{genebound}, we can take a closed bounded subset of $\mani$ including $\brc{x_{k}}$, which is actually a compact set from the Hopf-Rinow theorem. Therefore, $P_{\bar{\rho}}$ is bounded on the subset. By the monotone convergence theorem, $\brc{P_{\bar{\rho}}\paren{x_{k}}}$ converges as $k$ tends to infinity; hence, $\lim_{k\rightarrow \infty} P_{\bar{\rho}}\paren{x_{k}} - P_{\bar{\rho}}\paren{x_{k+1}} = 0$, which, together with \eqref{Armijoonestepbefore} and $\gamma > 0$, implies
    \begin{align}\label{alphaBlimit}
        \lim_{k\rightarrow \infty} \alpha_{k} \metr[x_{k}]{B_{k}\sbra{\Delta x^{\ast}_{k}}}{\Delta x^{\ast}_{k}} = 0.
    \end{align}
    
    Next, without loss of generality, by taking a subsequence $\mathcal{K}$ if necessary, we may assume that $\brc{\paren{x_{k}, \mu_{k}, \lambda_{k}}}_{k\in\mathcal{K}}$ is a sequence converging to $\paren{x^{\ast}, \mu^{\ast}, \lambda^{\ast}}$ and furthermore $\brc{\alpha_{k}}_{k\in\mathcal{K}}$ has a limit. We will prove $\lim_{k\in\mathcal{K}, k\rightarrow \infty} \norm{\Delta x_{k}^{\ast}}_{x_{k}} = 0$ by considering the following two cases for $\lim_{k\in\mathcal{K}, k\rightarrow\infty} \alpha_{k}$.

    ({\bf\casecom} 1) If $\lim_{k\in\mathcal{K}, k\rightarrow\infty} \alpha_{k} > 0$, then by (\ref{alphaBlimit}) we have $\lim_{k\in\mathcal{K}, k\rightarrow \infty} \metr[x_{k}]{B_{k}\sbra{\Delta x^{\ast}_{k}}}{\Delta x^{\ast}_{k}} \allowbreak = 0$, which, together with Assumption A\ref{Bkbounded} that $\metr[x_{k}]{B_{k}\sbra{\Delta x^{\ast}_{k}}}{\Delta x^{\ast}_{k}} \geq m \norm{\Delta x^{\ast}_{k}}^{2}_{x_{k}}$ for every $k\in\mathcal{K}$, implies $\lim_{k\in\mathcal{K}, k\rightarrow \infty} \norm{\Delta x_{k}^{\ast}}_{x_{k}} = 0$.

    ({\bf\casecom} 2) Otherwise, if $\lim_{k\in\mathcal{K}, k\rightarrow\infty} \alpha_{k} = 0$, then by the backtracking line search
    \eqref{Armijorule} in $\RSQO$, for every $k\in\mathcal{K}$,
    \begin{align}\label{al:key}
        \frac{\beta}{\alpha_{k}} \paren{P_{\bar{\rho}}\paren{x_{k}} - P_{\bar{\rho}}\circ R_{x_{k}}\paren{\frac{\alpha_{k}}{\beta} \Delta x^{\ast}_{k}}} < \gamma \metr[x_{k}]{B_{k}\sbra{\Delta x_{k}^{\ast}}}{\Delta x_{k}^{\ast}}.
    \end{align}
    To derive a contradiction, suppose that $\brc{\norm{\Delta x_k^{\ast}}_{x_k}}_{k\in\mathcal{K}}$ does not converge to $0$ as $k\in\mathcal{K}\to\infty$. Since $\brc{\norm{\Delta x^{\ast}_{k}}_{x_{k}}}_{k\in\mathcal{K}}$ is bounded from Proposition~\ref{DeltaxkBounded}, Assumption~A\ref{Bkbounded} ensures that there exists some $p>0$ such that
    \begin{align}
        \limsup_{k\in\mathcal{K}, k\to\infty}\metr[x_{k}]{B_{k}\sbra{\Delta x_{k}^{\ast}}}{\Delta x_{k}^{\ast}}=p.\label{eq:key3}
    \end{align}
    Now, let us take the limit superior on both sides in \eqref{al:key}. By combining it with \eqref{eq:key3}, \eqref{eq:key2} in Proposition~\ref{LimBIneqLimP}, and the fact that $P_{\bar{\rho}}\paren{x_k}=P_{\bar{\rho}}\circ R_{x_k}\paren{0_{x_k}}$, we have $p\le \gamma p$ implying $1\le \gamma$. However, this contradicts $\gamma<1$. Consequently, $\lim_{k\in\mathcal{K}, k\rightarrow\infty}\norm{\Delta x_k^{\ast}}_{x_k} = 0$ holds.

    Next, from the KKT conditions \eqref{subprobKKT} of the subproblem for each $k\in\mathcal{K}$ and the Cauchy-Schwarz inequality, we have $\mu^{\ast}_{k}\geq 0$ and 
    \begin{align}
    \begin{split}\label{normedsubprobKKT}
    &\norm{\text{\upshape grad}\,f\paren{x_{k}} + \sum_{i\in\mathcal{I}} \mu_{ki}^{\ast} \text{\upshape grad}\,g_{i}\paren{x_{k}} + \sum_{j\in\mathcal{E}}\lambda_{kj}^{\ast} \text{\upshape grad}\,h_{j}\paren{x_{k}}} = \norm{-B_{k}\sbra{\Delta x^{\ast}_{k}}} \leq \norm{B_{k}}_{\rm op} \norm{\Delta x^{\ast}_{k}},\\
        &g_{i}\paren{x_{k}} \leq - \metr[]{\text{\upshape grad}\,g_{i}\paren{x_{k}}}{\Delta x^{\ast}_{k}} \leq \norm{\text{\upshape grad}\,g_{i}\paren{x_{k}}} \norm{\Delta x^{\ast}_{k}} \text{ and} \\
        &\abs{\mu^{\ast}_{ki}g_{i}\paren{x_{k}}} = \abs{-\mu^{\ast}_{ki} \metr[]{\text{\upshape grad}\,g_{i}\paren{x_{k}}}{\Delta x^{\ast}_{k}}} \leq \mu^{\ast}_{ki} \norm{\text{\upshape grad}\,g_{i}\paren{x_{k}}} \norm{\Delta x^{\ast}_{k}}, \text{ for all } i\in\mathcal{I},\\
        &\abs{h_{j}\paren{x_{k}}} = \abs{-\metr[]{\text{\upshape grad}\,h_{j}\paren{x_{k}}}{\Delta x^{\ast}_{k}}} \leq \norm{\text{\upshape grad}\,h_{j}\paren{x_{k}}} \norm{\Delta x^{\ast}_{k}}, \text{ for all } j\in\mathcal{E},
    \end{split}
    \end{align}
    where $\norm{\cdot}_{\rm op}$ denotes the operator norm defined in Lemma~\ref{AkOperNormBounded}. Under Assumptions~A\ref{Bkbounded} and A\ref{genebound}, $\brc{\norm{B_{k}}_{\rm op}}_{k\in\mathcal{K}}$, $\brc{\mu^{\ast}_{k}}_{k\in\mathcal{K}}$, $\brc{\norm{\text{grad}g_{i}\paren{x_{k}}}}_{k\in\mathcal{K}}$, and $ \brc{\norm{\text{grad}h_{j}\paren{x_{k}}}}_{k\in\mathcal{K}}$ are all bounded from Lemma~\ref{AkOperNormBounded}, the boundedness of $\brc{x_{k}}_{k\in\mathcal{K}}$, and the continuity of $\norm{\cdot}$, $\brc{\text{grad}g_{i}}_{i\in\mathcal{I}}$ and $\brc{\text{grad}h_{j}}_{j\in\mathcal{E}}$. Thus, by driving $k\in\mathcal{K}\rightarrow \infty$ and noting $\lim_{k\in\mathcal{K},k\rightarrow\infty}\norm{\Delta x^{\ast}_{k}}_{x_{k}}=0$, $\mu^{\ast}\geq 0$ holds and all of the leftmost sides of \eqref{normedsubprobKKT} tend to $0$s, which imply that $\paren{x^{\ast},\mu^{\ast},\lambda^{\ast}}$ satisfies the KKT conditions \eqref{KKT} of RNLO \eqref{RNLO}.
\end{proof}

\subsection{Local convergence}
In this subsection, we study the local convergence of RSQO. In Section~\ref{RiemGeoonProd}, we introduce additional concepts from Riemannian geometry that will be needed for the local convergence analysis. In Section~\ref{localconvsubsubsec}, we prove the local quadratic convergence of RSQO.

\subsubsection{Notation and terminology for \texorpdfstring{$\mani\times\eucli[d^{\prime}]$}{the product manifold} }\label{RiemGeoonProd}
We will extend the concepts explained in Section~\ref{preliminaries} to the product manifold $\mani\times\eucli[d^{\prime}]$ with $d^{\prime}\coloneqq m+n$. Recall that $\mani$ is a $d$-dimensional Riemannian manifold endowed with a Riemannian metric $\metr[]{\cdot}{\cdot}$. We define the product manifold $\mani\times\eucli[d^{\prime}]$ by regarding the Euclidean space $\eucli[d^{\prime}]$ as a Riemannian manifold with the canonical inner product as the Riemannian metric.
Let $x\in\mani$ and $\eta\in\eucli[d^{\prime}]$. Here, the decomposition $\tanspc[\paren{x,\eta}]\paren{\mani\times\eucli[d^{\prime}]} = \tanspc[x]\mani\oplus \tanspc[\eta]\eucli[d^{\prime}]$, where $\oplus$ is the direct sum, ensures that all tangent vectors on $\mani\times\eucli[d^{\prime}]$ can be decomposed as $\xi_{x} \oplus \zeta_{\eta}$ with $\xi_{x}\in\tanspc[x]\mani$ and $\zeta_{\eta}\in\tanspc[\eta]\eucli[d^{\prime}]$. The product manifold $\mani \times \eucli[d^{\prime}]$ has the natural Riemannian metric $\metr[\paren{x,\eta}]{\cdot}{\cdot}: \tanspc[x]\mani\oplus\eucli[d^{\prime}] \times \tanspc[x]\mani\oplus \eucli[d^{\prime}] \rightarrow \eucli$, called the product metric, under the canonical identification $\tanspc[\eta]\eucli[d^{\prime}]\simeq\eucli[d^{\prime}]$, defined by $\metr[\paren{x,\eta}]{\xi_{x}^{1} \oplus \zeta_{\eta}^{1}}{\xi_{x}^{2} \oplus \zeta^{2}_{\eta}} \coloneqq \metr[x]{\xi_{x}^{1}}{\xi_{x}^{2}} + {\zeta^{1}}^{\top}_{\eta}\zeta^{2}_{\eta}$ for all $\xi_{x}^{1},\xi_{x}^{2} \in \tanspc[x]\mani$ and $\zeta_{\eta}^{1}, \zeta_{\eta}^{2} \in \eucli[d^{\prime}]$. Let $R$ be a retraction on $\mani$ and recall definition \eqref{def:retr} of retractions. It follows that the mapping 
\begin{align}\label{productRetr}
    \begin{split}
         \widetilde{R}: \tanspc[]\paren{\mani\times\eucli[d^{\prime}]} &\longrightarrow \mani\times\eucli[d^{\prime}]\\
            \xi_{x} \oplus \zeta_{\eta} &\longmapsto \paren{R_{x}\paren{\xi_{x}},\eta + \zeta_{\eta}}
       \end{split}
\end{align}
is a retraction on $\mani\times\eucli[d^{\prime}]$. Denoting by $\Gamma_{ij}^{\ell}$ the Christoffel symbols associated with the Levi-Civita connection on $\mani$ for each $i,j,\ell = 1,\ldots, d$, we can extend these concepts to $\mani\times\eucli[d^{\prime}]$ as follows: for each $i,j,\ell = 1,\ldots, d+d^{\prime}$,
\begin{align}\label{productChristoffel}
    \widetilde{\Gamma}_{ij}^{\ell}\paren{x,\eta} =
    \begin{cases}
        \Gamma_{ij}^{\ell}\paren{x}, & \text{if } i,j,\ell \in \brc{1\ldots d},\\
        0, & \text{otherwise}.
    \end{cases}
\end{align}

Lastly, let us define quadratic convergence on $\mani\times \eucli[d^{\prime}]$ by tailoring \cite[Definition.4.5.2]{Absiletal08OptBook}.
\begin{definition}\label{def:quad}
    Let $\brc{\paren{x_{k},\eta_{k}}}$ be a sequence on $\mani\times \eucli[d^{\prime}]$ that converges to $\paren{x^{\ast},\eta^{\ast}}$. Let $\paren{\mathcal{U},\varphi}$ be a chart of $\mani$ containing $x^{\ast}$. If there exists a constant $c\geq 0$ such that we have
    \begin{align*}
        \norm{\begin{pmatrix}\varphi\paren{x_{k+1}} - \varphi\paren{x^{\ast}}\\
        \eta_{k+1}-\eta^{\ast}
        \end{pmatrix}
        } \leq c \norm{
        \begin{pmatrix}\varphi\paren{x_{k}} - \varphi\paren{x^{\ast}}\\
        \eta_{k}-\eta^{\ast}
        \end{pmatrix}}^2
    \end{align*}
    for all $k$ sufficiently large, then $\brc{\paren{x_{k},\eta_{k}}}$ is said to converge to $\paren{x^{\ast},\eta^{\ast}}$ quadratically.
\end{definition}

\subsubsection{Local convergence analysis}\label{localconvsubsubsec}
We will write $\eta \coloneqq \paren{\mu,\lambda}\in\eucli[m+n]$ for brevity. Let $\brc{\paren{x_{k},\eta_{k}}}$ be a sequence produced by RSQO and $\paren{x^{\ast}, \eta^{\ast}}$ be an accumulation point of $\brc{\paren{x_{k},\eta_{k}}}$. Under Assumptions~A\ref{subprobfeasi}, A\ref{Bkbounded}, and A\ref{genebound}, $\paren{x^{\ast},\eta^{\ast}}$ is a KKT pair of RNLO \eqref{RNLO}, as we proved in Theorem~\ref{globconv} in Section~\ref{subsec:globalconv}. In what follows, we will prove that, under the following four assumptions, the whole sequence $\brc{\paren{x_{k},\eta_{k}}}$ actually converges to $\paren{x^{\ast},\eta^{\ast}}$ quadratically in the sense of Definition \ref{def:quad}. The first two assumptions are
\begin{enumerate}[\textbf{B}1]
    \item $\paren{x^{\ast},\eta^{\ast}}$ satisfies the LICQ, SOSCs, and SC, \label{locmincha}
    \item $f,\brc{g_{i}}_{i\in\mathcal{I}_{a}\paren{x^{\ast}}},\brc{h_{j}}_{j\in\mathcal{E}}$ are of class $C^{3}$.\label{classC3}
\end{enumerate}
See Section~\ref{Pre:RiemOpt} for the definitions of the LICQ, SOSCs, and SC. In this subsection, $\mathcal{L}_{\eta}\paren{x}$ stands for $\mathcal{L}_{\mu,\lambda}\paren{x}$; see Section~\ref{Pre:RiemOpt}
for the definition of $\mathcal{L}_{\mu,\lambda}$. The remaining two assumptions are related to RSQO iterations: for $k\geq K_{0}$ with $K_{0}$ sufficiently large, 
\begin{enumerate}[\textbf{B}1]
    \setcounter{enumi}{2}
    \item $B_{k}=\text{Hess}\,\mathcal{L}_{\eta_{k}}\paren{x_{k}}$,\label{sethess}
    \item a step length of unity is accepted, i.e., $\alpha_{k} = 1$.\label{newtonstep}
\end{enumerate}
It would be more practical to consider the case where, in Assumption~B\ref{sethess}, $B_k$ is updated with a quasi-Newton formula such as the BFGS formula. Meanwhile, the theoretical verification of Assumption~B\ref{newtonstep} is difficult because of the Maratos effect, e.g. see \cite{NocedalWright06NumOpt}. We will touch these issues again in Section~\ref{conclusion}.
Now, we establish the local quadratic convergence of RSQO to $\paren{x^{\ast},\eta^{\ast}}$. In our analysis, we show that the sequence generated by RSQO corresponds with that by the Riemannian Newton method, whose local quadratic convergence property is established in \cite[Chapter 6]{Absiletal08OptBook}.
\begin{theorem}\label{quadlocalconvthm}
    Under Assumptions A and B, $\brc{\paren{x_{k}, \eta_{k}}}$ converges to $\paren{x^{\ast},\eta^{\ast}}$ quadratically. 
\end{theorem}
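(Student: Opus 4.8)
The plan is to show that, once the iterates lie sufficiently close to $\paren{x^{\ast},\eta^{\ast}}$, each RSQO step coincides with a single step of the Riemannian Newton method applied to a nonlinear map on the product manifold $\mani\times\eucli[d^{\prime}]$ whose zeros are the KKT triplets of RNLO~\eqref{RNLO}, and then to invoke the local quadratic convergence of that method~\cite[Chapter 6]{Absiletal08OptBook}.

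First I would establish active-set identification. Since $\paren{x^{\ast},\eta^{\ast}}$ is a KKT pair by Theorem~\ref{globconv}, pass to a subsequence converging to it; Assumption~B\ref{locmincha} (in particular the SC) yields $\mu^{\ast}_{i}>0$ for $i\in\mathcal{I}_{a}\paren{x^{\ast}}$ and $g_{i}\paren{x^{\ast}}<0$ for $i\notin\mathcal{I}_{a}\paren{x^{\ast}}$. By continuity of the $g_{i}$, the $\text{grad}\,g_{i}$, and the remaining subproblem data, for all sufficiently large $k$ the solution $\Delta x_{k}^{\ast}$ of~\eqref{QP} has active index set exactly $\mathcal{I}_{a}\paren{x^{\ast}}$, with $\mu^{\ast}_{ki}>0$ on active indices and $\mu^{\ast}_{ki}=0$ on inactive ones. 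The inactive inequality constraints may therefore be discarded, and~\eqref{QP} reduces to a purely equality-constrained quadratic problem over $\brc{g_{i}}_{i\in\mathcal{I}_{a}\paren{x^{\ast}}}\cup\brc{h_{j}}_{j\in\mathcal{E}}$.

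Next I would define the map $F$ on $\mani\times\eucli[d^{\prime}]$ encoding these KKT conditions -- collecting $\text{grad}\,\mathcal{L}_{\eta}\paren{x}$ together with the active constraint values $\paren{g_{i}\paren{x}}_{i\in\mathcal{I}_{a}\paren{x^{\ast}}}$, $\paren{h_{j}\paren{x}}_{j\in\mathcal{E}}$, and the inactive multipliers -- and argue that, with $B_{k}=\text{Hess}\,\mathcal{L}_{\eta_{k}}\paren{x_{k}}$ from Assumption~B\ref{sethess}, the reduced system~\eqref{subprobKKT} is exactly the linearized Newton equation for $F$ at $\paren{x_{k},\eta_{k}}$, expressed through the Levi-Civita connection and the product structure~\eqref{productChristoffel}. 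Here I would use the coordinate expression~\eqref{Hesscoordmetr} so that the Christoffel correction inside $\text{Hess}\,\mathcal{L}_{\eta_{k}}$ matches the connection terms in the covariant derivative of $F$. Because Assumption~B\ref{newtonstep} forces $\alpha_{k}=1$ and the update uses the product retraction~\eqref{productRetr}, the RSQO iterate $\paren{x_{k+1},\eta_{k+1}}=\paren{R_{x_{k}}\paren{\Delta x_{k}^{\ast}},\mu_{k}^{\ast},\lambda_{k}^{\ast}}$ then coincides with the Riemannian Newton iterate for $F$.

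It remains to verify the hypotheses of the Riemannian Newton convergence theorem and transfer its conclusion. Assumption~B\ref{classC3} ($C^{3}$ data) makes $F$ of class $C^{2}$, and I would show that the covariant derivative of $F$ at $\paren{x^{\ast},\eta^{\ast}}$ is invertible: this is the Riemannian counterpart of the classical fact that LICQ together with the SOSCs (both in Assumption~B\ref{locmincha}) render the KKT Jacobian nonsingular, since LICQ gives full rank of the active constraint gradients while the SOSCs give positive definiteness of $\text{Hess}\,\mathcal{L}_{\eta^{\ast}}\paren{x^{\ast}}$ on their common kernel. With these in place, \cite[Chapter 6]{Absiletal08OptBook} yields quadratic convergence of the Newton -- hence the RSQO -- sequence to $\paren{x^{\ast},\eta^{\ast}}$ in the chart, i.e., in the sense of Definition~\ref{def:quad}, and in particular the entire sequence (not merely the subsequence) converges. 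The main obstacle is the equivalence asserted in the previous paragraph: in the Euclidean case it is routine linear algebra, but on a manifold one must reconcile the Christoffel corrections in $\text{Hess}\,\mathcal{L}_{\eta_{k}}$ via~\eqref{Hesscoordmetr} with the connection terms in the covariant derivative of $F$, and work with the retraction~\eqref{productRetr} in place of vector addition. A secondary subtlety is arguing active-set identification and the passage from subsequential to full convergence, since the neighborhood of validity depends on the Newton contraction radius.
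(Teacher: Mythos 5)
Your overall strategy is the same as the paper's: near $\paren{x^{\ast},\eta^{\ast}}$, identify each RSQO step with one step of the Riemannian Newton method on the product manifold $\mani\times\eucli[m+n]$ applied to the KKT system, verify nonsingularity of the linearization at the limit from the LICQ and SOSCs (in the paper, via the coordinate KKT matrix \eqref{NewtonHessmetrcoord} and \cite[Lemma 16.1]{NocedalWright06NumOpt}), and invoke \cite[Theorem 6.3.2]{Absiletal08OptBook}; full-sequence convergence then follows because the tail coincides with a Newton sequence started inside the contraction neighborhood. The one substantive difference is the direction in which you establish the RSQO--Newton equivalence. You argue forward: first identify the active set of subproblem \eqref{QP} for large $k$ using the SC, discard the inactive constraints, and match the reduced equality-constrained system with the Newton equation for your map $F$. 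The paper argues backward: it assumes without loss of generality $\mathcal{I}_{a}\paren{x^{\ast}}=\mathcal{I}$ (the general case is handled in a footnote, where the inactive multipliers are carried along as zero components, much as in your $F$), shows that the Newton step on the product manifold satisfies the full KKT conditions \eqref{subprobKKT} of \eqref{QP} --- the linearized inequalities hold with equality by the Newton equation itself and the multipliers stay positive in a shrunken neighborhood --- and then concludes equality of the two steps from strong convexity of \eqref{QP} (unique optimum) together with uniqueness of the multiplier under linear independence \cite[Theorem 4.1 (d)]{BergmannHerzog19IntKKT}. This backward route is what makes the induction clean, and it is exactly where your sketch is loose: active-set identification for the QP solution does not follow from ``continuity'' of the problem data alone; you would need a stability result for the parametric QP (that $\Delta x_{k}^{\ast}\to 0$ and the QP multipliers converge to $\eta^{\ast}$, a Robinson-type perturbation argument), or you can adopt the paper's uniqueness argument, which sidesteps the issue entirely. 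With that step repaired or replaced, your proof goes through and is essentially the paper's.
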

\begin{proof}  
    We will prove the theorem by showing that $\brc{\paren{x_{k},\eta_{k}}}_{k\ge K}$ with $K$ sufficiently large is actually identical to a certain quadratically convergent sequence generated by the Riemannian Newton method~\cite{Absiletal08OptBook} on $\mani\times\eucli[m+n]$; see Appendix~\ref{SubsecAppen:RiemNewton} for an overview of the Riemannian Newton method. Henceforth, for the sake of a simple explanation, we will assume that $\mathcal{I}_a\paren{x^{\ast}}=\mathcal{I}$; see Section~\ref{Pre:RiemOpt} for the definition of $\mathcal{I}_{a}$. The subsequent argument can be extended to the case of $\mathcal{I}_{a}\paren{x^{\ast}} \subset \mathcal{I}$. \footnote{For an arbitrary subsequence $\brc{\paren{x_{k}, \eta_{k}}}_{k\in\mathcal{K}}$ by RSQO converging to $\paren{x^{\ast}, \eta^{\ast}}$ and $K\in\mathcal{K}$ sufficiently large, we see that $\mu_{K\tilde{i}}^{\ast} = 0$ for all $\tilde{i}\in\mathcal{I}\backslash\mathcal{I}_{a}\paren{x^{\ast}}$. Bisect $\mu_{k}$ into $\paren{\mu_{ki},\mu_{k\tilde{i}}}$, where $i\in\mathcal{I}_{a}\paren{x^{\ast}}$ and $\tilde{i}\in\mathcal{I}\backslash\mathcal{I}_{a}\paren{x^{\ast}}$. We can similarly prove that $\brc{\paren{x_{k}, \mu^{\ast}_{ki}, \lambda^{\ast}_{k}, \mu^{\ast}_{k\tilde{i}}}}_{k\geq K}$ are identical to $\brc{\paren{x^{\prime}_{k}, \mu^{\prime}_{k}, \lambda^{\prime}_{k}, 0}}_{k=0,1,\ldots}$, where $\brc{\paren{x^{\prime}_{k}, \mu^{\prime}_{k}, \lambda^{\prime}_{k}}}$ is generated by the Riemannian Newton method with $\paren{x^{\prime}_{0}, \mu^{\prime}_{0}, \lambda^{\prime}_{0}} = \paren{x_{K}, \mu^{\ast}_{K}, \lambda^{\ast}_{K}}$.}

    Let $\eta^{\prime}\coloneqq\paren{\mu^{\prime}, \lambda^{\prime}} \in \eucli[m+n]$. Notice that $\mathcal{L}\paren{x^{\prime}, \eta^{\prime}}$ is a real-valued function defined on $\mani\times\eucli[m+n]$, whereas $\mathcal{L}_{\eta^{\prime}}\paren{x^{\prime}}$ is one defined on $\mani$ for the given $\eta^{\prime}$. Consider the Riemannian Newton method for solving $\text{grad}\mathcal{L}\paren{x^{\prime}, \eta^{\prime}} = 0$ on $\mani \times \eucli[m+n]$. Its $\paren{k+1}$-th iteration is defined by 
    \begin{subequations}\label{RiemNewtonProdMani}
        \begin{align}
            &\text{Hess}\,\mathcal{L}\paren{x^{\prime}_{k}, \eta^{\prime}_{k}}\sbra{\Delta x^{\prime}_{k} \oplus \Delta \eta^{\prime}_{k}} = -\text{grad}\,\mathcal{L}\paren{x^{\prime}_{k}, \eta^{\prime}_{k}},\label{Newtoneq:ProMani}\\
            &\paren{x^{\prime}_{k+1}, \eta^{\prime}_{k+1}} = \widetilde{R}_{\paren{x^{\prime}_{k}, \eta^{\prime}_{k}}} \paren{\Delta x^{\prime\ast}_{k} \oplus \Delta \eta_{k}^{\prime\ast}},\label{RiemNewtonRetrProdMani}
        \end{align}
    \end{subequations}
    where Hess and grad are operators defined over $\mathcal{M}\times \eucli[m+n]$ and $\Delta x^{\prime\ast}_{k} \oplus \Delta\eta_{k}^{\prime\ast}$ is the solution of \eqref{Newtoneq:ProMani}. Moreover, as defined by \eqref{productRetr}, $\widetilde{R}$ is the retraction over $\mathcal{M}\times \eucli[m+n]$. 

    First, we show that $\text{grad}\mathcal{L}\paren{x^{\ast},\eta^{\ast}}=0$ and that $\text{Hess}\mathcal{L}\paren{x^{\ast},\eta^{\ast}}$ is nonsingular. These properties are required as the assumptions of \cite[Theorem 6.3.2]{Absiletal08OptBook} concerning the local quadratic convergence of the Riemannian Newton method in solving $\text{grad}\mathcal{L}\paren{x,\eta}=0$. The equation $\text{grad}\,\mathcal{L}\paren{x^{\ast},\eta^{\ast}}=0$ is readily confirmed from the KKT condition \eqref{KKTLag} and $\mathcal{I}_{a}\paren{x^{\ast}}=\mathcal{I}$. To show the nonsingularity of $\text{Hess}\mathcal{L}\paren{x^{\ast},\eta^{\ast}}$, 
    using \eqref{Hesscoordmetr} and \eqref{productChristoffel}, we first represent the coordinate expression of $ \metr[\paren{x^{\ast},\eta^{\ast}}]{\text{Hess}\,\mathcal{L}\paren{x^{\ast},\eta^{\ast}}\sbra{\cdot}}{\cdot}$ as
    \begin{align}\label{NewtonHessmetrcoord}
        \begin{bmatrix}
            \text{D}^{2}\widehat{\mathcal{L}}_{\eta^{\ast}}\paren{\widehat{x^{\ast}}} - \widehat{\Gamma}_{\widehat{x^{\ast}}}\sbra{\text{D}\widehat{\mathcal{L}}_{\eta^{\ast}}\paren{\widehat{x^{\ast}}}}& \text{D}\widehat{\textbf{g}}\paren{\widehat{x^{\ast}}}
            &\text{D}\widehat{\textbf{h}}\paren{\widehat{x^{\ast}}}\\
            \text{D}\widehat{\textbf{g}}\paren{\widehat{x^{\ast}}}^{\top} & 0 & 0\\
            \text{D}\widehat{\textbf{h}}\paren{\widehat{x^{\ast}}}^{\top} & 0 & 0
        \end{bmatrix},
    \end{align}
    where 
    \begin{align*}
        \widehat{\textbf{g}}\paren{\widehat{x}}&\coloneqq\paren{\widehat{g}_{1}\paren{\widehat{x}}, \widehat{g}_{2}\paren{\widehat{x}}, \cdots, \widehat{g}_{m}\paren{\widehat{x}}}^{\top}\in\eucli[m],\\ \widehat{\textbf{h}}\paren{\widehat{x}}&\coloneqq\paren{\widehat{h}_{1}\paren{\widehat{x}}, \widehat{h}_{2}\paren{\widehat{x}}, \cdots, \widehat{h}_{n}\paren{\widehat{x}}}^{\top}\in\eucli[n],\\
        \text{D}\widehat{\textbf{g}}\paren{\widehat{x}} &\coloneqq\paren{\text{D}\widehat{g}_{1}\paren{\widehat{x}}, \text{D}\widehat{g}_{2}\paren{\widehat{x}}, \cdots, \text{D}\widehat{g}_{m}\paren{\widehat{x}}}\in\eucli[d\times m],\\
        \text{D}\widehat{\textbf{h}}\paren{\widehat{x}} &\coloneqq\paren{\text{D}\widehat{h}_{1}\paren{\widehat{x}}, \text{D}\widehat{h}_{2}\paren{\widehat{x}}, \cdots, \text{D}\widehat{h}_{n}\paren{\widehat{x}}}\in\eucli[d\times n].
    \end{align*}
    From the LICQ, it follows that $\mathcal{N}\coloneqq\sbra{\text{D}\widehat{\textbf{g}}\paren{\widehat{x^{\ast}}},\text{D}\widehat{\textbf{h}}\paren{\widehat{x^{\ast}}}}\in\eucli[d\times \paren{m+n}]$ is a full-column rank matrix. Furthermore, it follows from the SOSCs that $\paren{\text{D}^{2}\widehat{\mathcal{L}}_{\eta^{\ast}}\paren{\widehat{x^{\ast}}} - \widehat{\Gamma}_{\widehat{x^{\ast}}}\sbra{D\widehat{\mathcal{L}}_{\eta^{\ast}}\paren{\widehat{x^{\ast}}}}}\in\eucli[d\times d]$ is positive-definite on the null space of $\mathcal{N}$. Thus, by \cite[Lemma 16.1]{NocedalWright06NumOpt}, matrix \eqref{NewtonHessmetrcoord} is nonsingular, which implies that $\text{Hess}\,\mathcal{L}\paren{x^{\ast},\eta^{\ast}}$ is also nonsingular.

    Since the assumptions of \cite[Theorem 6.3.2]{Absiletal08OptBook} have been fulfilled as shown above, the theorem is applicable, and thus, there exists some neighborhood $\mathcal{N}(x^{\ast},\eta^{\ast}) \subseteq \mani\times \eucli[d]$ of $\paren{x^{\ast},\eta^{\ast}}$ such that 
    \begin{enumerate}[(\cond 1)]
        \item $\brc{\paren{x^{\prime}_{k},\eta^{\prime}_{k}}}$ quadratically converges to $\paren{x^{\ast},\eta^{\ast}}$,\label{nbrquad}
        \item $\brc{\paren{x^{\prime}_{k},\eta^{\prime}_{k}}}\subseteq \mathcal{N}(x^{\ast},\eta^{\ast})$,\label{nbrinclseq}
    \end{enumerate}
    where $\brc{\paren{x^{\prime}_{k},\eta^{\prime}_{k}}}$ is any sequence generated by \eqref{RiemNewtonProdMani} starting from a point in $\mathcal{N}(x^{\ast},\eta^{\ast})$. Furthermore, notice that, as both the LICQ at $x^{\ast}$ and $\mu^{\ast}>0$ hold because of Assumption B\ref{locmincha} and $\mathcal{I}_a(x^{\ast})=\mathcal{I}$, we may assume that $\mu>0$ and the linear independence of $\brc{\text{grad}g_{i}\paren{x}}_{i\in\mathcal{I}}$ and $\brc{\text{grad}h_{j}\paren{x}}_{j\in\mathcal{E}}$ hold for any $(x,\eta)=(x,\mu,\lambda)\in \mathcal{N}(x^{\ast},\eta^{\ast})$, if necessary, by replacing $\mathcal{N}(x^{\ast},\eta^{\ast})$ in the above with a sufficiently smaller neighborhood of $(x^{\ast},\eta^{\ast})$. Therefore, by (\cond\ref{nbrinclseq}), we have
    \begin{enumerate}[(\cond 1)]
        \setcounter{enumi}{2}
        \item $\brc{\text{\upshape grad}\,g_{i}\paren{x_{k}}, \text{\upshape  grad}\,h_{j}\paren{x_{k}}}_{i\in\mathcal{I},j\in\mathcal{E}}$ are linearly independent for all $k\geq 0$, \label{nbrLICQ}
        \item $\mu_k^{\prime}>0$, \quad $\forall k\ge 0$.\label{nbrsc}
    \end{enumerate}

    As $(x^{\ast},\eta^{\ast})$ is an accumulation point of $\brc{\paren{x_{k},\eta_{k}}}$ generated by RSQO, there exists some $K\ge K_0$ such that $\paren{x_{K},\eta_{K}}\in \mathcal{N}(x^{\ast},\eta^{\ast})$ and the properties in Assumptions~B\ref{sethess} and B\ref{newtonstep} are valid for all $k\ge K$. Set $\paren{x_{0}^{\prime},\eta_{0}^{\prime}}:=\paren{x_{K},\eta_{K}}$ and perform \eqref{RiemNewtonProdMani} successively to produce a sequence $\brc{\paren{x_{k}^{\prime},\eta_{k}^{\prime}}}$. In order to verify the assertion of this theorem, by virtue of (\cond\ref{nbrquad}), it suffices to prove that the two sequences $\brc{\paren{x_{k},\eta_{k}}}_{k\ge K}$ and $\brc{\paren{x_{k}^{\prime},\eta_{k}^{\prime}}}_{k\ge 0}$ are identical, namely, $\paren{x_{K+k},\eta_{K+k}}=\paren{x_{k}^{\prime},\eta_{k}^{\prime}}$ for all $ k\ge 0$.
    
    We can prove this equation by induction. The case $k=0$ is obvious from the definition. Next, consider the case of $k=1$. Note that \eqref{Newtoneq:ProMani} is equivalent to the equation $\metr[\paren{x^{\prime}_{k},\eta^{\prime}_{k}}]{\text{Hess}\,\mathcal{L}\paren{x^{\prime}_{k},\eta^{\prime}_{k}}\sbra{\Delta x^{\prime}_{k} \oplus \Delta \eta^{\prime}_{k}}}{\plchold} \allowbreak = \metr[\paren{x^{\prime}_{k},\eta^{\prime}_{k}}]{-\text{grad}\,\mathcal{L}\paren{x^{\prime}_{k},\eta^{\prime}_{k}}}{\plchold}$ and the coordinate expression of the equation at $\paren{x^{\prime}_{0}, \eta^{\prime}_{0}}$ is
    \begin{align}\label{CoordNewtonEqonProdMani}
        \begin{bmatrix}
            \text{D}^{2}\widehat{\mathcal{L}}_{\eta_{0}}\paren{\widehat{x^{\prime}_{0}}} - \widehat{\Gamma}_{\widehat{x^{\prime}_{0}}}\sbra{\text{D}\widehat{\mathcal{L}}_{\eta_{0}}\paren{\widehat{x^{\prime}_{0}}}} & \text{D}\widehat{\textbf{g}}\paren{\widehat{x^{\prime}_{0}}} & \text{D}\widehat{\textbf{h}}\paren{\widehat{x^{\prime}_{0}}} \\
            \text{D}\widehat{\textbf{g}}\paren{\widehat{x^{\prime}_{0}}}^{\top} & 0 & 0\\
            \text{D}\widehat{\textbf{h}}\paren{\widehat{x^{\prime}_{0}}}^{\top} & 0 & 0\\
        \end{bmatrix}
        \begin{bmatrix}
            \widehat{\Delta x^{\prime\ast}_{0}}\\
            \Delta \mu_{0}^{\prime\ast}\\
            \Delta \lambda_{0}^{\prime\ast}\\
        \end{bmatrix}
        = - 
        \begin{bmatrix}
            \text{D}\widehat{\mathcal{L}}_{\eta_{0}}\paren{\widehat{x^{\prime}_{0}}}\\
            \widehat{\textbf{g}}\paren{\widehat{x^{\prime}_{0}}}\\
            \widehat{\textbf{h}}\paren{\widehat{x^{\prime}_{0}}}\\
        \end{bmatrix},
    \end{align}
    where $\Delta \eta^{\prime\ast}_{0} \eqqcolon \paren{\Delta \mu^{\prime\ast}_{0}, \Delta \lambda^{\prime\ast}_{0}} \in \eucli[m]\times\eucli[n]$. By substituting 
    \begin{gather*}
        \text{D}\widehat{\mathcal{L}}_{\eta_{0}}\paren{\widehat{x^{\prime}_{0}}} = \text{D}\widehat{f}\paren{\widehat{x^{\prime}_{0}}} + \text{D}\widehat{\textbf{g}}\paren{\widehat{x^{\prime}_{0}}}\mu^{\prime}_{0} + \text{D}\widehat{\textbf{h}}\paren{\widehat{x^{\prime}_{0}}}\lambda^{\prime}_{0},\\
        \mu^{\prime}_{1} = \mu^{\prime}_{0} + \Delta \mu^{\prime\ast}_{0},\\
        \lambda^{\prime}_{1} = \lambda^{\prime}_{0} + \Delta \lambda_{0}^{\prime\ast}
    \end{gather*}
    into \eqref{CoordNewtonEqonProdMani}, we have 
    \begin{gather*}
        \paren{\text{D}^{2}\widehat{\mathcal{L}}_{\eta_{0}}\paren{\widehat{x^{\prime}_{0}}} - \widehat{\Gamma}_{\widehat{x^{\prime}_{0}}}\sbra{\text{D}\widehat{\mathcal{L}}_{\eta_{0}}\paren{\widehat{x^{\prime}_{0}}}}}\widehat{\Delta x^{\prime\ast}_{0}} + \text{D}\widehat{f}\paren{\widehat{x^{\prime}_{0}}} + \text{D}\widehat{\textbf{g}}\paren{\widehat{x^{\prime}_{0}}}\mu^{\prime}_{1} +
        \text{D}\widehat{\textbf{h}}\paren{\widehat{x^{\prime}_{0}}}\lambda^{\prime}_{1} = 0,\\
        \textbf{g}\paren{\widehat{x^{\prime}_{0}}} + \text{D}\widehat{\textbf{g}}\paren{\widehat{x^{\prime}_{0}}}\widehat{\Delta x^{\prime\ast}_{0}} = 0,\\
        \textbf{h}\paren{\widehat{x^{\prime}_{0}}} + \text{D}\widehat{\textbf{h}}\paren{\widehat{x^{\prime}_{0}}}\widehat{\Delta x^{\prime\ast}_{0}}= 0.
    \end{gather*}
    Note that $\mu^{\prime}_{1}>0$ holds by (\cond\ref{nbrsc}). Hence, by using $\eta^{\prime}_{1} = \paren{\mu^{\prime}_{1}, \lambda^{\prime}_{1}}$ in \eqref{RiemNewtonRetrProdMani} with $k=0$ as a pair of Lagrange multiplier vectors, $\paren{\widehat{\Delta x^{\prime\ast}_{0}}, \eta^{\prime}_{1}}$ satisfies the KKT conditions of the following problem:
    \begin{align}
        \begin{split}\label{coordNewtonsubprob}
            \underset{\widehat{\Delta x_{0}} \in \eucli[d]}{\text{minimize}} \quad & \frac{1}{2} \widehat{\Delta x_{0}}^{\top} \paren{\text{D}^{2}\widehat{\mathcal{L}}_{\eta_{0}}\paren{\widehat{x^{\prime}_{0}}} - \widehat{\Gamma}_{\widehat{x^{\prime}_{0}}}\sbra{\text{D}\widehat{\mathcal{L}}_{\eta_{0}}\paren{\widehat{x^{\prime}_{0}}}}}\widehat{\Delta x_{0}}  + {\text{D}\widehat{f}\paren{\widehat{x^{\prime}_{0}}}}^{\top}\widehat{\Delta x_{0}} \\
            \text{subject to}\quad  & \widehat{g_{i}}\paren{\widehat{x^{\prime}_{0}}} + {\text{D}\widehat{g_{i}}\paren{\widehat{x^{\prime}_{0}}}}^{\top}\widehat{\Delta x_{0}} \leq 0, \text{ for all }i \in \mathcal{I},\\ 
            & \widehat{h_{j}}\paren{\widehat{x^{\prime}_{0}}} + {\text{D}\widehat{h_{j}}\paren{\widehat{x^{\prime}_{0}}}}^{\top}\widehat{\Delta x_{0}} = 0, \text{ for all } j \in \mathcal{E},
        \end{split}
    \end{align}
    which is nothing but the coordinate expression of subproblem \eqref{QP} that is solved in RSQO at $x_0^{\prime}=x_K$. Note also that it follows from (\cond\ref{nbrLICQ}) and \cite[Theorem 4.1 (d)]{BergmannHerzog19IntKKT} that $\eta^{\prime}_{1}$ is the unique Lagrange multiplier vector of subproblem \eqref{coordNewtonsubprob}. Thus, we obtain $(\Delta {x^{\prime}_0}^{\ast},\eta^{\prime}_1)=(\Delta x_K^{\ast},\eta_{K+1})$, which, together with $(x^{\prime}_0,\eta_0^{\prime})=(x_K,\eta_K)$ and $\alpha_K=1$ by $K\ge K_0$ and Assumption~\text{B}\ref{newtonstep}, implies 
    \begin{align*}
        \paren{x^{\prime}_{1}, \eta^{\prime}_{1}}=\paren{R_{x_{K}}\paren{\alpha_K\Delta x^{\ast}_{K}},\eta_{K+1}}=\paren{x_{K+1}, \eta_{K+1}},
    \end{align*}
    where the first equality is derived from $x^{\prime}_{1} = R_{x^{\prime}_{0}}\paren{\Delta x^{\prime\ast}_{0}}$, which is implied by \eqref{RiemNewtonRetrProdMani}, and the second equality comes from the definition of $x_{K+1}$ in RSQO. In a similar way, we can prove $\paren{x^{\prime}_{k}, \eta^{\prime}_{k}} = \paren{x_{K+k}, \eta_{K+k}}$ for $k=2,3,\ldots$ in order. This ensures $\paren{x_{K+k},\eta_{K+k}}=\paren{x_{k}^{\prime},\eta_{k}^{\prime}}$ for all $ k\ge 0$, and thus, the proof is complete.
\end{proof}

\section{Numerical experiments}\label{experiment}
We will demonstrate the efficiency of RSQO by using it to numerically solve a problem, nonnegative low-rank matrix completion. For the sake of comparison, we will also solve these problems by using the Riemannian methods presented by Liu and Boumal~\cite{LiuBoumal19Simple}. Parts of the results and the discussions are deferred to Appendix~\ref{appx:suppexperiment}. In addition, we will also experiment on a minimum balanced cut problem in \cite{LiuBoumal19Simple} in Appendix~\ref{appx:suppexperiment}.
All the experiments are implemented in Matlab\_R2020b and Manopt 6.0~\cite{Boumaletal14Manopt} on a Macbook Pro 2019 with 2.4 GHz 8-Core Intel Core i9 CPU and 16.0 GB memory. The code is freely available.\footnote{\url{https://github.com/shirokumakur0/Sequential-quadratic-programming-on-manifold}.}

\subsection{Problem setting: nonnegative low-rank matrix completion}\label{subsec:probsetnonneglowrankcompl}
Briefly, low-rank matrix completion is the problem of recovering a matrix from a sampling of its entries. The problem setting is from Guglielmi and Scalone~\cite{GuglielmiScalone20EfficientMethodforNNLowrankCompl} with modifications of constraints. Compared to \cite{GuglielmiScalone20EfficientMethodforNNLowrankCompl}, our setting has extra equality constraints according to the reliability of the sampled data.

\newcommand{\wholeindset}{S}
\newcommand{\nonnegindset}{J}
\newcommand{\eqindset}{C}

Define a fixed-rank manifold $\mani_{p} \coloneqq \brc{X\in\eucli[q\times s] \relmiddle{|} \text{rank}\paren{X} = p}$. Let $\wholeindset\coloneqq \brc{1,\ldots,q}\times\brc{1,\ldots,s}$ and $\nonnegindset \subseteq \wholeindset$, and let $A\in\eucli[q\times s]$ be a matrix such that the entries on $\nonnegindset$ are known \textit{a priori}. Consider the case that the entries of $A$ on some set $\eqindset(\subseteq\nonnegindset)$ are exact or especially reliable, while those on $\nonnegindset \backslash \eqindset$ may contain noises. Then, the nonnegative low-rank matrix completion problem under the above setting can be represented as
\begin{align}
    \begin{split}\label{RCNN}
        \min_{X\in\mani_{p}} \quad \frac{1}{2}\norm{P_{\nonnegindset \backslash \eqindset}\paren{X-A}}^{2}_{F} \quad \text{s.t.} \quad 
        \begin{aligned}
            &X_{ij} \geq 0 \quad \text{for all } \paren{i,j} \in \wholeindset \backslash \nonnegindset,\\
            &X_{ij} = A_{ij} \quad \text{for all } \paren{i,j} \in \eqindset,
        \end{aligned}
    \end{split}
\end{align}
where, for an input matrix $Z\in\eucli[q\times s]$, $P_{\nonnegindset \backslash \eqindset}\paren{Z}$ is a $q\times s$ matrix whose $\paren{i,j}$-th entry is $Z_{ij}$ if $\paren{i,j}\in \nonnegindset \backslash \eqindset$ and $0$, otherwise. Without loss of generality, we may assume $q\leq s$. Strictly speaking, since $\mani_{p}$ is not a complete manifold~\cite{Boumal20IntroRiemOptBook}, the problem setting does not  completely match ours and that of \cite{LiuBoumal19Simple}. Yet, our algorithm is sufficiently effective as we will see later in Section~\ref{subsub:resultnonneglowrank}.

\newcommand{\leftA}{T}
\newcommand{\rightA}{V}

\textbf{Input.} We consider the cases $\paren{q,s} = \paren{4,8}$ and $\paren{5,10}$. 
We set the rank $p=2$ for each case. Our implementations follow Vandereycken~\cite{Vandereycken13LowRankMatComplbyRiemOpt} with a few modifications: for each $(p,q,s)$, we first generate $\leftA\in\eucli[q\times p]$ and $\rightA \in \eucli[p\times s]$ of uniformly distributed random numbers between 0 and 1. We repeat this process until $\text{rank}\paren{\leftA\rightA}=p$ holds. Then we adopt $A=\leftA\rightA\in\eucli[q\times s]$. We also randomly generate $J$ and $C$ satisfying $\abs{J}=\ceil{\abs{S}\slash 2}$ and $\abs{C}=\ceil{\abs{J}\slash 2}$, where $\ceil{\cdot}$ is the ceiling function.

\newcommand{\coordhessmat}{\widehat{\text{H}\mathcal{L}}_{k}}
\newcommand{\unitaryhessmat}{\widehat{U}_{k}}
\newcommand{\diaghessmat}{\widehat{\Lambda}_{k}}
\subsection{Experimental environment}\label{subsection:experimental_environment} 
Throughout the experiments, RSQO solves subproblem~\eqref{QP} in the following manner. We use a modified Hessian of the Lagrangian as $B_{k}$ in \eqref{QP}: at each $k$-th iteration, we randomly generate an orthonormal basis of $\tanspc[x_{k}]\mani$, denoted by $\brc{\partdiff{}{e_{1}},\ldots, \partdiff{}{e_{d}}}$, with the Gram-Schmidt process. Using it, we compute the Hessian matrix of the Lagrangian $\coordhessmat\in\eucli[d\times d]$ whose $\paren{i,j}$-th element is $\metr[x_{k}]{\text{Hess}\mathcal{L}_{\mu_{k}, \lambda_{k}}\paren{x_{k}}\sbra{\partdiff{}{e_{i}}}}{ \partdiff{}{e_{j}}}$. We can compute the inner product with the projections of the Euclidean Hessian and the Euclidean gradient onto $\tanspc[x_{k}]\mani$~\cite{Boumal20IntroRiemOptBook}. Here it holds that $\metr[]{\text{Hess}\mathcal{L}_{\mu_{k}, \lambda_{k}}\paren{x_{k}}\sbra{\xi}}{\zeta} = \widehat{\xi}^{\top}\coordhessmat \widehat{\zeta}$ for any $\xi,\zeta\in\tanspc[x_{k}]\mani$. We make use of the function \texttt{hessianmatrix} from Manopt~\cite{Boumaletal14Manopt} to execute these procedure. Then we decompose $\coordhessmat  = {\unitaryhessmat}^{\top}\, \diaghessmat \, \unitaryhessmat$, where $\unitaryhessmat$ is a unitary matrix and $\diaghessmat$ is the diagonal one. We modify $\diaghessmat$ to the diagonal matrix $\diaghessmat^{+}$ defined by
\begin{align*}
    \diaghessmat^{+}\paren{i,j} \coloneqq
        \begin{cases}
            \max\paren{\delta, \diaghessmat\paren{i,i}} & \text{if } i=j,\\
            \diaghessmat\paren{i,j} & \text{otherwise},
        \end{cases}
\end{align*}
where $\delta > 0$ is prefixed. Then, we set $\metr[]{B_{k}\sbra{\Delta x_{k}}}{\Delta x_{k}} = \widehat{\Delta x_{k}}^{\top}\coordhessmat^{+}\widehat{\Delta x_{k}}$ with $\coordhessmat^{+} \coloneqq {\unitaryhessmat}^{\top}\, \diaghessmat^{+}\, \unitaryhessmat$ in subproblem~\eqref{QP}. Notice that $\coordhessmat^{+}=\coordhessmat$ holds if $\coordhessmat$ is positive-definite. 
We set $\delta = 10^{-8}$ for the minimum cut problem and $\delta = 10^{-5}$ for the nonnegative low-rank matrix completion.

Similarly, we translate the Riemannian gradients into their coordinate expressions in subproblem~\eqref{QP}: using the same basis of $\tanspc[x_{k}]\mani$, we compute $\text{D}\widehat{f}\paren{\widehat{x_{k}}}\in\eucli[d]$ whose $l$-th element is $\metr[]{\text{grad}f\paren{x_{k}}}{\partdiff{}{e_{l}}}$. We also compute $\brc{\text{D}\widehat{g_{i}}\paren{\widehat{x_{k}}}}_{i\in\mathcal{I}}$ and $\brc{\text{D}\widehat{h_{j}}\paren{\widehat{x_{k}}}}_{j\in\mathcal{E}}$ in the same manner.

Using them, we organize the following Euclidean form of subproblem~\eqref{QP}:
\begin{align}
    \begin{split}\label{NumericalQP}
        \underset{\widehat{\Delta x_{k}} \in \eucli[qs]}{\text{minimize}} \quad & \frac{1}{2} \widehat{\Delta x_{k}}^{\top} \coordhessmat^{+} \widehat{\Delta x_{k}} + {\text{D}\widehat{f}\paren{\widehat{x_{k}}}}^{\top}\widehat{\Delta x_{k}} \\
        \text{subject to}\quad  & \widehat{g_{i}}\paren{\widehat{x_{k}}} + {\text{D}\widehat{g_{i}}\paren{\widehat{x_{k}}}}^{\top}\widehat{\Delta x_{k}} \leq 0, \text{ for all }i \in \mathcal{I},\\ 
        & \widehat{h_{j}}\paren{\widehat{x_{k}}} + {\text{D}\widehat{h_{j}}\paren{\widehat{x_{k}}}}^{\top}\widehat{\Delta x_{k}} = 0, \text{ for all } j \in \mathcal{E}.
    \end{split}
\end{align}
We solve \eqref{NumericalQP} by \texttt{quadprog}, a Matlab solver for quadratic optimization problems, in which \texttt{interior-point-convex} algorithm is selected with the default setting.

We compare our method with the Riemannian methods proposed by Liu and Boumal~\cite{LiuBoumal19Simple}; that is, we compare the following algorithms:
\begin{itemize}
    \renewcommand\labelitemi{--}
    \item \textbf{RSQO (Our method)}: Riemannian sequential quadratic optimization
    \item RALM: Riemannian augmented Lagrangian method in~\cite{LiuBoumal19Simple} 
    \item REPM(LQH): Riemannian exact penalty method with smoothing functions (linear-quadratic and pseudo-Huber) in~\cite{LiuBoumal19Simple}
    \item REPM(LSE): Riemannian exact penalty method with smoothing functions (log-sum-exp) in~\cite{LiuBoumal19Simple}
\end{itemize}

To measure the deviation of an iterate from the set of KKT points, we use residuals based on the KKT conditions \eqref{KKT} and the manifold constraints of the problem. 
In the nonnegative low-rank matrix completion problem, the residual is
    {\small 
        \begin{align*}
            \sqrt{\norm{\text{grad}\,\mathcal{L}_{\mu}\paren{X}}^{2} + \sum_{i\in\mathcal{I}} \paren{\max\paren{0,-\mu_{i}}^{2} + \max\paren{0,g_{i}\paren{X}}^{2} + \paren{\mu_{i}g_{i}\paren{X}}^{2} + \sum_{j\in\mathcal{E}} \abs{h_{j}\paren{X}}^{2} }} + \iota_{p}\paren{X},
        \end{align*}
    }
where each term in the square root is from the KKT conditions \eqref{KKTLag}, \eqref{KKTineq}, \eqref{KKTcompl}, and \eqref{KKTeq}, and the last one is the indicator function defined by $\iota_{p}\paren{X} \coloneqq 0$ if $\text{rank}\paren{X} = p$ and $+\infty$, otherwise.

The stopping criteria are based on a maximal iteration, maximal time, and changes in parameters, and will be explained in detail in our discussion of each experiment. We set the parameters as $\varepsilon = 0.5, \rho_{-1} = 1, \beta = 0.9$, and $\gamma = 0.25$ for RSQO. Regarding the implementation of RALM and REPMs, we utilize the environment provided by Liu~\cite{Liu19RALMcode} on Manopt, a Riemannian optimization toolbox on Matlab, after making some modifications.

\subsection{Numerical results}\label{subsub:resultnonneglowrank}

We applied the algorithms to the nonnegative low-rank matrix completion problem~\eqref{RCNN}. As for the initial point, each algorithm ran from the same feasible point that was numerically obtained in advance: given $A$, $\nonnegindset$, and $\eqindset$, we first solved 
$\min_{X\in\mani_{p}} 0 \text{ s.t. } X_{ij} \geq 0 \text{ for all } \paren{i,j}\in \wholeindset \backslash \nonnegindset \text{ and } X_{ij} = A_{ij} \text{ for all } \paren{i,j}\in \eqindset$ by REPM(LQH) until we obtained a solution with residual$=10^{-2}$. We adopted the solution as the initial point. If the spent time exceeded 600 seconds, the iteration number was over 100,000, or the algorithm did not update any parameters, the algorithm was terminated. As for the numerical settings of RALM and REPMs, we mostly employed the original ones in \cite{Liu19RALMcode}, but set $\epsilon_{\min} = 10^{-16}$ and $d_{\min} = 0$ so as to prevent the algorithms from being terminated when the step length got too small.

Figure~\ref{Fig:NonnegResTime} shows the residual of the algorithms. RSQO successfully solved the instance with the highest accuracy of the residual less than $10^{-10}$, while the accuracies of other solutions were more than $10^{-6}$. RSQO tends to steadily decrease the residual at first and then much more time is necessary to get more accurate solutions.
In the computation of RSQO, it occupied more than $80\%$ of the whole running time to call the function \texttt{hessianmatrix}; 
see Section~\ref{subsection:experimental_environment} for the detail of the function. This means that constructing the Hessian of the Lagrangian is the most expensive.

\begin{figure}[t]
    \begin{center}
        \includegraphics[width=0.8\linewidth]{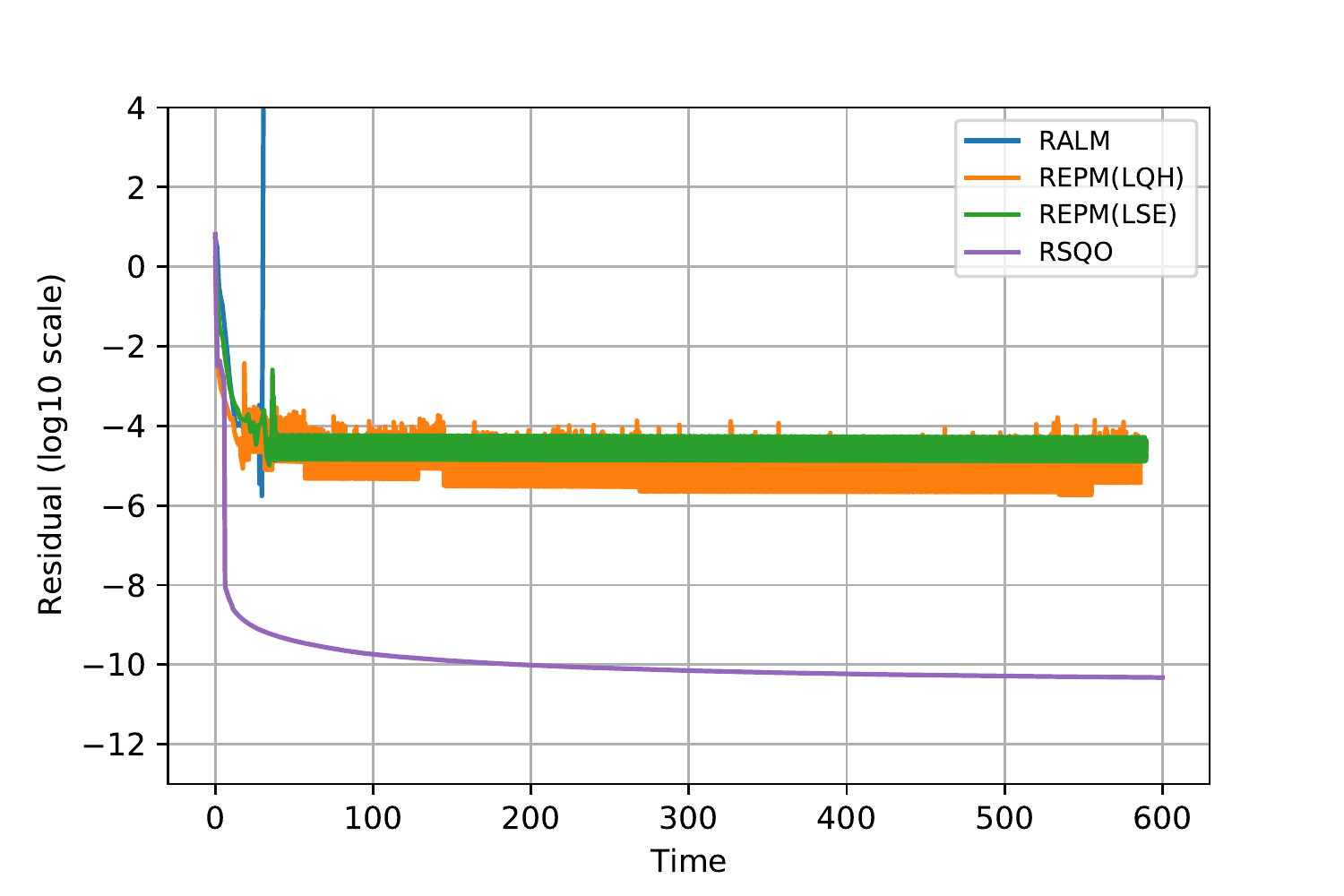}
    \end{center}
    \caption{Residual for a nonnegative low-rank matrix completion problem}
	\label{Fig:NonnegResTime}
\end{figure}

We also conducted experiments under other settings to measure the speed and robustness of the algorithms. For $\paren{q,s}=\paren{4,8}$ and $\paren{5,10}$, we conducted experiments 20 times and measured the average CPU time and iterations between cases where RSQO could reach solutions with residual $10^{-6}$. Each experiment was terminated if a solution with residual$=10^{-6}$ was found, the spent time exceeded 60 seconds, iterations went over 1,000, or neither iterate nor parameters were updated. We set $\epsilon_{\min} = 10^{-6}$ and $d_{\min} = 0$ for RALM and REPMs.

From Table~\ref{20trialsLowRankCompl}, we observed that RSQO successfully solved almost all instances both in $\paren{q,s}=\paren{4,8}$ and in $\paren{5,10}$. In comparison with RALM, since RALM reached the solution for at most 25\% of the instances, the result indicates that RSQO can solve the instances more stably. In comparison with REPMs, we can observe that RSQO solves the instances not only more stably but also faster.

\begin{table}[t]
\begin{center}
\caption{The ratio of successful trials out of 20. The average time and the average number of iterations among the successful trials are also presented in each setting.}
\begin{tabular}{c|ccc|ccc}
\hline
$\paren{q,s}$ & \multicolumn{3}{c|}{(4,8)}                  & \multicolumn{3}{c}{(5,10)}                  \\ \cline{2-7} 
              & success (\%) & time (sec.)       & \# iter. & success (\%) & time (sec.)   & \# iter. \\ \hline
RSQO          & $95$         & $3.314$           & $72$     & $100$        & $8.656$           & $118$    \\
RALM          & $25$         & $4.011$           & $22$     & $20$         & $3.948$           & $20$     \\
REPM (LQH)    & $60$         & $1.050 \times 10$ & $45$     & $45$         & $1.869 \times 10$ & $950$    \\
REPM (LES)    & $70$         & $1.009 \times 10$ & $386$    & $30$         & $2.596 \times 10$ & $721$    \\ \hline
\end{tabular}
\label{20trialsLowRankCompl}
\end{center}
\end{table}

\subsection{Further experiments}\label{subsec:furexp}
In addition to the experiments in the preceding subsections, we conducted additional experiments. 
Here, we briefly summarize the results; the details are deferred to Appendix~\ref{appx:suppexperiment}.

As for the nonnegative low-rank completion problem \eqref{RCNN}, we further investigated the behaviors of RSQO and the other methods in different sizes of the problems.
The empirical results show that RSQO tends to compute the solution more accurately than the other methods. Nevertheless, the average CPU time per step for RSQO to reach its most accurate solution drastically increases as the problem size does because of the expensive computation of the Hessian matrix.

We also experimented on another problem, a minimum balanced cut problem. This is an optimization problem on a complete manifold, called an oblique manifold, with equality constraints introduced in~\cite{LiuBoumal19Simple}. In addition to the Riemannian methods, we applied \texttt{fmincon SQO}, an Euclidean SQO solver, to this problem because the problem can be formulated as a nonlinear optimization problem on the Euclidean space.
In the experiment, RSQO drastically decreased its residual, which might be due to the quadratic convergence property of RSQO as a result of using the Hessian of the Lagrangian. 
In spite of that \textbf{fmincon SQO} and RSQO share the same SQO framework, \textbf{fmincon SQO} did not work at all. This fact may underscore an advantage of the Riemannian manifold approach.

\section{Discussion and conclusion}
\label{conclusion}
We proposed a Riemannian sequential quadratic optimization (RSQO) method for RNLO \eqref{RNLO}. We proved the global and local convergence properties of the algorithm in Section~\ref{method} and conducted numerical experiments comparing it with the Riemannian augmented Lagrangian method, Riemannian exact penalty methods, and a Matlab solver \texttt{fmincon} using SQO in Section~\ref{experiment} and Appendix~\ref{appx:suppexperiment}.
We found that RSQO solved the problems more stably and with higher accuracy. However, the execution time of RSQO increased drastically as the problem size grew. In closing, we discuss future directions for more advanced RSQO methods:
\begin{enumerate}
    \item \textbf{Large-scale optimization:} As we mentioned in Section~\ref{subsec:furexp} and Appendix~\ref{appx:suppexperiment}, the CPU time of RSQO drastically increases as the problem size does. This is mainly due to the computation of the Hessian matrix. In addition, the eigenvalue decomposition of the Hessian to retain the positive-definiteness may be also costly in general. Thus, one direction would be to develop an efficient update of the coefficient operator $B_k$ in the quadratic optimization subproblem of RSQO without the Hessian. Such a formula may be obtained by extending the Powell-symmetric-Broyden or BFGS one~\cite{BoggsTolle96SQPsurvey} for the Euclidean SQO to the Riemannian case although this work seems tough in light of the history of the development of the unconstrained Riemannian BFGS formulae~\cite{Absiletal08OptBook, RingWirth12BFGSShapeSpace, Huangetal15BroydenLBFGS, Huangetal18RBFGSforNonconv}.
    
    Another effective technique for large-scale optimization would be to use inexact solutions of subproblem~\eqref{QP}. 
    Yet, in light of inexact Euclidean SQO methods~\cite{Byrd08iSQOEqualityConsts, Curtisetal14InexcatSQPforNonlinearOptim, WaltherBiegler16InexactTrustRegionFilterSQP, IzmailovSolodov10truncatedSQP, Burkeetal20inexactSQOpenaparamwithinQP}, it seems that additional termination criteria or different algorithmic structures would be necessary so as to establish the theoretical guarantees in the Riemannian setting.

    \item \textbf{Treatment of infeasible subproblems:} In relation to Assumption A\ref{subprobfeasi}, various SQO methods have been proposed to deal with the infeasibility of the subproblems in Euclidean cases. For example, when the infeasibility of the subproblem is detected, some SQO methods~\cite{Gilletal02SNOPT, Tone83ElasticModeSQP} enter the elastic mode, where the constraints are modified to be consistent. Robust SQO~\cite{BurkeHan89robustSQP} is also a variant to circumvent the infeasibility of the subproblems by relaxing the feasible region of the subproblem. Another way is the feasibility restoration phase, which is an effective technique for filter SQO methods to restore the feasibility when the subproblem is infeasible due to the trust-region radius~\cite{FletcherLeyffer02filterSQP, Fletcheretal02filterSQPGlobalConv, Ulbrich04filterSQPsuplinconv}. Extending these techniques to the Riemannian case may mitigate the assumptions, in particular, Assumption~A\ref{subprobfeasi}.
    
    \item \textbf{Treatment of unboundedness of the Lagrange multipliers:} We supposed the boundedness of the Lagrange multiplier sequence in Assumption~A\ref{genebound}. 
    Such assumptions were circumvented or mitigated in some existing researches on Euclidean SQO. For example, in a stabilized SQO or SQP method, which was initiated by Wright~\cite{Wright97StabilizedSQPtoDegeneSol} to attain superlinear convergence to degenerate solutions of NLO problems, the global convergence results are often established in the absence of such assumptions~\cite{Gilletal17StabilizedSQPGlobConv, GillRobinson13GlobConvStabilizedSQP}. Extending such SQO methods to the Riemannian setting may help us to make RSQO methods more practical.
    
    \item \textbf{Avoidance of the Maratos effect:} In relation to Assumption~B\ref{newtonstep}, an interesting direction would be to develop a means avoiding the Maratos effect; that is, $\alpha_{k}=1$ may be rejected when using a line-search technique with the $\ell_{1}$ penalty function. In consideration of discussions in Euclidean cases, it seems to be difficult to resolve the issue without additional techniques such as the second-order correction~\cite{Fukushima86SQPavoidMaratos}.
\end{enumerate}

\section*{Acknowledgement}
The authors are grateful to the associate editor and the two anonymous referees for their valuable comments and suggestions.

\appendix
\setcounter{section}{0}
\renewcommand{\thesection}{\Alph{section}}
\setcounter{equation}{0}
\renewcommand{\theequation}{\thesection-\arabic{equation}}

\section{Proof of Proposition~\ref{prop:SuffCondforSubProbFeasi}}
\label{SubsecAppen:PrfofPropSuffcond}
Let us start by introducing the notion of convexity on Riemannian manifolds. For details, we refer the reader to \cite{Boumal20IntroRiemOptBook}.

\begin{definition} {\upshape (\cite[Definition 11.2]{Boumal20IntroRiemOptBook})}
A set $H \subseteq \mani$ is said to be a geodesically convex set with respect to the Riemannian metric $\metr[]{\cdot}{\cdot}$ if, for any $x,y\in H$, there exists a geodesic $\gamma_{xy}:\sbra{0,1}\rightarrow\mani$ that joins $x$ to $y$, i.e., $\gamma_{xy}\paren{0}=x$ and $\gamma_{xy}\paren{1}=y$, and lies entirely in $H$.
\end{definition}
Note that a connected and complete manifold $\mani$ itself is geodesically convex. Moreover, we shall define a geodesically convex function via a first-order approximation. 
\begin{definition} {\upshape(\cite[Definition 11.4, Theorem 11.17]{Boumal20IntroRiemOptBook})}
Let $H\subseteq \mani$ be a geodesically convex set with respect to $\metr[]{\cdot}{\cdot}$. A differentiable function $\theta:H\rightarrow\eucli$ is said to be a geodesically convex function with respect to $\metr[]{\cdot}{\cdot}$ if, for any $x,y\in H$ and any geodesic segment $\gamma_{xy}:\sbra{0,1}\rightarrow \mani$ that joins $x$ to $y$ and lies entirely in $H$,
\begin{align}\label{gconvexdef}
            \theta\paren{x} + t\mathrm{D}\theta\paren{x}\sbra{\xi_{x}^{y}} \leq \theta\paren{\gamma_{xy}\paren{t}}, \quad \forall t\in\sbra{0,1}
\end{align}
holds, where $\xi_{x}^{y}\in\tanspc\mani$ denotes the tangent vector corresponding to $\gamma_{xy}$. We call $\theta$ a geodesically linear function if both $\theta$ and $-\theta$ are geodesically convex. 
\end{definition}
Note that the geodesically linear function is a generalization of the standard linear function on $\eucli[d]$. Sra et al.~\cite{Sraetal18gConvexforBLC} introduced a log-determinant function as a geodesically linear function on a positive-definite cone.

\begin{proof}[Proof of Proposition~\ref{prop:SuffCondforSubProbFeasi}]
Recall that $\bar{x}$ is a feasible solution of RNLO \eqref{RNLO}. For any $x\in\mani$, there exists a geodesic segment $\gamma_{x\bar{x}}$ by the Hopf-Rinow theorem. Let $\xi_{x}^{\bar{x}}$ be the corresponding tangent vector to $\gamma_{x\bar{x}}$. Then, for all $i\in\mathcal{I}$, we have
\begin{align*}
    0 \geq g_{i}\paren{\bar{x}} \geq \text{D}g_{i}\paren{x}\sbra{\xi_{x}^{\bar{x}}} + g_{i}\paren{x} = \metr[]{\text{\upshape grad}\,g_{i}\paren{x}}{\xi_{x}^{\bar{x}}} + g_{i}\paren{x},
\end{align*}
where the first inequality follows from the feasibility of $\bar{x}$ and the second one from \eqref{gconvexdef} with $t=1$.
Similarly, for all $j\in\mathcal{E}$,
\begin{align*}
    0 = h_{j}\paren{\bar{x}} = \text{D}h_{j}\paren{x}\sbra{\xi_{x}^{\bar{x}}} + h_{j}\paren{x} = \metr[]{\text{\upshape grad}\,h_{j}\paren{x}}{\xi_{x}^{\bar{x}}} + h_{j}\paren{x}
\end{align*}
holds. Hence by setting $x=x_{k}$, we obtain that $\xi_{x_{k}}^{\bar{x}}$ is a feasible solution of \eqref{QP} for every iteration $k$.
\end{proof}

\section{Proof of Lemma~\ref{AkOperNormBounded}} \label{appendix:prfofAkOperNormBounded}
\begin{proof}[Proof of Lemma~\ref{AkOperNormBounded}]
    Without loss of generality, we may assume that $\coordmetr[x]$, the coordinate expression of the Riemannian metric at $x$, is the identity matrix and $\norm{\xi}_{x} = \sqrt{{\widehat{\xi}}^{\top}\widehat{\xi}}$; see \cite[Section 1.2.7]{Huang13PhD} for a justification of this assumption. Then, the uniform positive-definiteness of $\mathcal{A}_{x}$ reads
    \begin{align}\label{coordAkbounded}
        m {\widehat{\xi}}^{\top}\widehat{\xi} \leq {\widehat{\xi}}^{\top}\widehat{\mathcal{A}_{x}}\widehat{\xi} \leq M {\widehat{\xi}}^{\top}\widehat{\xi}.
    \end{align}
    Note that the symmetry and positive-definiteness of $\mathcal{A}_{x}$ ensure those of $\widehat{\mathcal{A}_{x}}$. Moreover, $\widehat{\mathcal{A}_{x}^{-1}} = \widehat{\mathcal{A}_{x}}^{-1}$ holds; that is, the coordinate expression of the inverse mapping of $\mathcal{A}_{x}$ is the inverse matrix of $\widehat{\mathcal{A}_{x}}$. Indeed, we have
    \begin{align*}
        \widehat{\mathcal{A}_{x}^{-1}}\widehat{\mathcal{A}_{x}} 
        &= \paren{\text{D}\varphi\paren{x} \circ \mathcal{A}_{x}^{-1} \circ \text{D}\varphi\paren{x}^{-1}}\paren{\text{D}\varphi\paren{x} \circ \mathcal{A}_{x} \circ \text{D}\varphi\paren{x}^{-1}}\\
        &= \text{D}\varphi\paren{x} \circ \text{id}_{\tanspc[x]\mani} \circ \text{D}\varphi\paren{x}^{-1}\\ 
        &= \widehat{E},
    \end{align*}
    where $\text{id}_{\tanspc[x]\mani}$ denotes the identity mapping on $\tanspc[x]\mani$ and $\widehat{E}\in\eucli[d\times d]$ is the identity matrix. Thus, \eqref{coordAkbounded} implies that all eigenvalues of the symmetric matrices $\widehat{\mathcal{A}_{x}}$ and $\widehat{\mathcal{A}_{x}^{-1}}$ are not greater than $M$ and $1 \slash m$, respectively. Hence, we have 
    \begin{align*}
        \norm{\widehat{\mathcal{A}_{x}}}_{\text{F}} \leq M\sqrt{d} \text{ and } \norm{\widehat{\mathcal{A}_{x}^{-1}}}_{\text{F}} \leq \frac{\sqrt{d}}{m},
    \end{align*}
    where $\norm{\cdot}_{\text{F}}$ denotes the Frobenius norm on $\eucli[d\times d]$. Additionally, by \cite[Lemma 6.2.6]{Huang13PhD}, 
    \begin{align*}
        \norm{\mathcal{A}_{x}}_{\rm op} \leq \norm{\widehat{\mathcal{A}_{x}}}_{\text{F}} \text{ and } \norm{\mathcal{A}_{x}^{-1}}_{\rm op} \leq \norm{\widehat{\mathcal{A}_{x}^{-1}}}_{\text{F}}
    \end{align*}
    follow, where $\norm{\cdot}_{\rm op}$ is the operator norm. By combining these inequalities above, we conclude $\norm{\mathcal{A}_{x}}_{\rm op} \leq M\sqrt{d}$ and $\norm{\mathcal{A}_{x}^{-1}}_{\rm op} \leq \sqrt{d}\slash m$.
\end{proof}

\section{Proof of Proposition~\ref{LimBIneqLimP}} \label{appendix:ExistenceofLimDeltaXBandProperties}
Here, we aim to prove Proposition~\ref{LimBIneqLimP}. First, we will introduce some concepts from Riemannian and nonsmooth optimization theories and then prove Lemma~\ref{existDxBastwithproperties} and \ref{lem:clark} as preliminary results.

\subsection{Additional preliminaries}
Here, we briefly review additional tools for Riemannian optimization, presented in \cite{Boumal20IntroRiemOptBook}, for the sake of proving Proposition~\ref{LimBIneqLimP}.
We also describe some concepts of nonsmooth optimization from \cite{Clarke90OptNonSmoothAnal}.

\subsubsection{Additional tools for Riemannian optimization}
For each $x\in\mani$, let $\text{Exp}_{x}:\tanspc[x]\mani\rightarrow\mani$ denote the exponential mapping at $x$, the mapping such that $t\mapsto \text{Exp}_{x}\paren{t\xi}$ is the unique geodesic that passes through $x$ with velocity $\xi\in\tanspc[x]\mani$ when $t=0$. Note that $\text{Exp}:\tanspc[]\mani\rightarrow\mani$ is smooth. The injectivity radius at $x$ is defined as
\begin{align*}
\text{Inj}\paren{x} \coloneqq \sup\brc{r>0 \relmiddle{|} \left.\text{Exp}_{x}\right|_{\brc{\gamma\in\tanspc[x]\mani\relmiddle{|}\norm{\gamma}<r}} \text{ is a diffeomorphism}}.
\end{align*}
Note that $\text{Inj}\paren{x}>0$ for any $x\in\mani$. For any $y\in\mani$ with $\text{dist}\paren{x,y}<\text{Inj}\paren{x}$, there is a unique minimizing geodesic connecting $x$ and $y$, which induces parallel transport along the minimizing geodesic $\Pi_{x\rightarrow y}:\tanspc[x]\mani\rightarrow\tanspc[y]\mani$. Note that the parallel transport is isometric, i.e., $\norm{\Pi_{x\rightarrow y}\sbra{\xi_{x}}}_{y} = \norm{\xi_{x}}_{x}$ for any $\xi_{x}\in\tanspc[x]\mani$ and $\Pi_{x\rightarrow x}$ is the identity mapping on $\tanspc[x]\mani$. Additionally, it follows that $\Pi_{x\rightarrow y}^{-1} = \Pi_{y\rightarrow x}$. The adjoint of the parallel transport corresponds with its inverse, that is, $\metr[y]{\Pi_{x\rightarrow y}\sbra{\xi_{x}}}{\,\zeta_{y}} = \metr[x]{\xi_{x}}{\,\Pi_{y\rightarrow x}\sbra{\zeta_{y}}}$ for all $\xi_{x}\in\tanspc[x]\mani$ and $\zeta_{y}\in\tanspc[y]\mani$.
We also introduce a property of the limit of the gradient with the parallel transport. 
\begin{lemma}{\rm (\cite[Lemma A.2.]{LiuBoumal19Simple})}\label{lem:limParallelGrad}
    Given $x\in\mani$ and a sequence $\brc{x_{k}}$ such that ${\rm dist}\paren{x_{k},x} \allowbreak < {\rm Inj}\paren{x}$ for each $k$ and $\brc{x_{k}}$ converges to $x$. Then, for a continuously differentiable function $\theta:\mani\rightarrow\eucli[]$, the following holds:
    \begin{align*}
        \lim_{k\rightarrow\infty} \Pi_{x_{k}\rightarrow x}\sbra{{\rm grad}\theta\paren{x_{k}}} = {\rm grad}\theta\paren{x},
    \end{align*}
    where $\Pi_{x_{k}\rightarrow x}$ is the parallel transport along the minimizing geodesic.
\end{lemma}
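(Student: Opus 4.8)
The plan is to convert the claim, which compares the two vectors $\text{grad}\,\theta\paren{x_{k}}\in\tanspc[x_{k}]\mani$ and $\text{grad}\,\theta\paren{x}\in\tanspc\mani$ living in \emph{different} tangent spaces, into an estimate carried out inside the single space $\tanspc[x_{k}]\mani$. Since $\text{dist}\paren{x_{k},x}<\text{Inj}\paren{x}$, both $\Pi_{x_{k}\rightarrow x}$ and its inverse $\Pi_{x\rightarrow x_{k}}$ are well defined, so $\text{grad}\,\theta\paren{x}=\Pi_{x_{k}\rightarrow x}\sbra{\Pi_{x\rightarrow x_{k}}\sbra{\text{grad}\,\theta\paren{x}}}$. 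Factoring the linear map $\Pi_{x_{k}\rightarrow x}$ out and using that it is isometric yields
\[
\norm{\Pi_{x_{k}\rightarrow x}\sbra{\text{grad}\,\theta\paren{x_{k}}}-\text{grad}\,\theta\paren{x}}_{x}
=\norm{\text{grad}\,\theta\paren{x_{k}}-\Pi_{x\rightarrow x_{k}}\sbra{\text{grad}\,\theta\paren{x}}}_{x_{k}}.
\]
Hence it suffices to prove that the right-hand side tends to $0$ as $k\rightarrow\infty$, where now both vectors lie in $\tanspc[x_{k}]\mani$.

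I would then pass to a coordinate chart $\paren{\mathcal{U},\varphi}$ containing $x$; because $x_{k}\rightarrow x$, we have $x_{k}\in\mathcal{U}$ for all large $k$, and I restrict attention to those $k$. In these coordinates the coordinate gradient is $\widehat{\text{grad}\,\theta}\paren{\widehat{y}}=\coordmetr[y]^{-1}\text{D}\widehat{\theta}\paren{\widehat{y}}$, which is continuous in $y$ since $\theta$ is $C^{1}$ and $\coordmetr[y]$ is continuous and positive-definite; thus $\widehat{\text{grad}\,\theta\paren{x_{k}}}\rightarrow\widehat{\text{grad}\,\theta\paren{x}}$. Writing the parallel transport $\Pi_{x\rightarrow x_{k}}$ in coordinates as a matrix $P_{k}\in\eucli[d\times d]$, the claim would follow once $P_{k}\rightarrow I$: then $P_{k}\,\widehat{\text{grad}\,\theta\paren{x}}\rightarrow\widehat{\text{grad}\,\theta\paren{x}}$, so the coordinate vector of $\text{grad}\,\theta\paren{x_{k}}-\Pi_{x\rightarrow x_{k}}\sbra{\text{grad}\,\theta\paren{x}}$ converges to $0$, and since $\norm{\cdot}_{x_{k}}$ is evaluated through $\coordmetr[x_{k}]$, which is continuous in $x_{k}$ and hence bounded near $x$, the norm itself converges to $0$.

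The main obstacle is precisely the fact that $P_{k}\rightarrow I$, i.e. the continuous dependence of parallel transport on its endpoints together with $\Pi_{x\rightarrow x}=\text{id}$. The plan for this is to recall that parallel transport along a curve is the solution of a linear first-order ODE whose coefficients are assembled from the Christoffel symbols $\Gamma^{\ell}_{ij}$ evaluated along the curve, and that the minimizing geodesic from $x$ to $x_{k}$ depends smoothly on $x_{k}$ within the injectivity radius because $\text{Exp}_{x}$ is a diffeomorphism there. By smooth dependence of solutions of a linear ODE on its parameters and initial data, $P_{k}$ depends continuously on $x_{k}$, and it degenerates to the identity as the geodesic collapses to the single point $x$; this gives $P_{k}\rightarrow I$ and closes the argument.
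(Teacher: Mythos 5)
Your proposal is correct. Note, though, that the paper does not prove this lemma at all: it is imported verbatim by citation from Liu and Boumal \cite[Lemma A.2]{LiuBoumal19Simple}, so there is no in-paper argument to compare against. What you have written is essentially a reconstruction of the proof in the cited source, which rests on their Lemma A.1 (smooth dependence of $\paren{x_k,\zeta}\mapsto \Pi_{x\rightarrow x_k}\sbra{\zeta}$ on both arguments, proved exactly as you do: the transport equation is a linear ODE with coefficients built from the Christoffel symbols along the geodesic $t\mapsto \text{Exp}_{x}\paren{t\,\text{Exp}_{x}^{-1}\paren{x_k}}$, and smooth dependence on parameters plus $\Pi_{x\rightarrow x}=\text{id}$ gives the limit) --- indeed the present paper itself invokes that same smoothness fact, with a footnote, in its proof of Lemma~\ref{lem:clark}. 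Your reduction via isometry to a single tangent space $\tanspc[x_{k}]\mani$ and the coordinate computation with $\coordmetr[y]^{-1}\text{D}\widehat{\theta}\paren{\widehat{y}}$ are both sound; the only point worth making explicit is that for $k$ large the entire minimizing geodesic from $x$ to $x_{k}$ lies in the chart $\mathcal{U}$ (its length is $\text{dist}\paren{x,x_{k}}\rightarrow 0$, so it stays in a small normal ball about $x$), which is what licenses writing the transport ODE, and hence $P_{k}$, in the fixed coordinates; your restriction to large $k$ covers this implicitly.
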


\subsubsection{Notation and terminology from nonsmooth optimization on \texorpdfstring{$\tanspc[x]\mani^{2}$}{the product space of tangent spaces}}
Let $x\in\mani$ be an arbitrary point. Recall that $\tanspc[x]\mani$ is a $d$-dimensional inner product space. Thus, $\tanspc[x]\mani\oplus\tanspc[x]\mani$ is the $2d$-dimensional inner product space, where $\oplus$ is the direct sum. An element of $\tanspc[x]\mani\oplus\tanspc[x]\mani$ is expressed as $\xi^{1} \oplus \xi^{2}$ with $\xi^{1}, \xi^{2} \in\tanspc[x]\mani$. Hereafter, for brevity, we often use the notations $\tanspc[x]\mani^{2}$ and $\xi^{\oplus}$ instead of $\tanspc[x]\mani\oplus\tanspc[x]\mani$ and $\xi^{1} \oplus \xi^{2}$, respectively. To simplify our descriptions, we will `translate' some concepts from nonsmooth analysis \cite{Clarke90OptNonSmoothAnal}. Specifically, we will redefine Clarke regularity and generalized derivatives in terms of $\tanspc[x]\mani^{2}$ and introduce some of the related properties.

Let $\zeta^{\oplus}, \xi^{\oplus}, \chi^{\oplus} \in\tanspc[x]\mani^{2}$ and $l:\tanspc[x]\mani^{2}\rightarrow\eucli[]$ be Lipschitz continuous near $\zeta^{\oplus}$; i.e., there exists a Lipschitz constant $L\geq 0$ such that $\abs{l\paren{\chi^{\oplus}} - l\paren{\xi^{\oplus}}} \leq L \norm{\chi^{\oplus} - \xi^{\oplus}}$ for all $\xi^{\oplus}, \chi^{\oplus}\in\tanspc[x]\mani^{2}$ within a neighborhood of $\zeta^{\oplus}$. The generalized directional derivative of $l$ at $\zeta^{\oplus}$ in the direction $\xi^{\oplus}$, denoted by $l^{\circ}\paren{\zeta^{\oplus};\xi^{\oplus}}$, is defined as follows:
\begin{align*}
    l^{\circ}\paren{\zeta^{\oplus};\xi^{\oplus}} \coloneqq \limsup_{\chi^{\oplus}\rightarrow \zeta^{\oplus}, t\downarrow 0}\frac{l\paren{\chi^{\oplus}+t\xi^{\oplus}} - l\paren{\chi^{\oplus}}}{t},
\end{align*}
where $\chi^{\oplus}$ is a vector in $\tanspc[x]\mani^{2}$ and $t$ is a positive scalar. Moreover, the generalized gradient of $l$ at $\zeta^{\oplus}$, denoted by $\partial l\paren{\zeta^{\oplus}}$, is defined as 
\begin{align*}
    \partial l\paren{\zeta^{\oplus}} \coloneqq \brc{\phi\in \cotanspc[x]\mani^{2} \relmiddle{|}  \phi\sbra{\xi^{\oplus}} \leq l^{\circ}\paren{\zeta^{\oplus};\xi^{\oplus}} \text{ for all } \xi^{\oplus} \text{ in } \tanspc[x]\mani^{2}},
\end{align*}
where $\cotanspc[x]\mani^{2}$ is the dual space of $\tanspc[x]\mani^{2}$, namely, the set of linear mappings from $\tanspc[x]\mani$ to $\eucli[]$.
Formally state the following as a proposition: The following holds from \cite[Proposition 2.1.5 (b)]{Clarke90OptNonSmoothAnal}.
\begin{proposition} \label{prop:genegradclosedmapping}
    $\partial l\paren{\cdot}$ is a closed point-to-set mapping: let $\brc{\zeta_{i}^{\oplus}}\subseteq\tanspc[x]\mani^{2}$ and $\brc{\phi_{i}}\subseteq\cotanspc[x]\mani^{2}$ be sequences such that $\phi_{i}\in\partial l\paren{\zeta^{\oplus}_{i}}$ for each $i$. Supposing that $\brc{\zeta^{\oplus}_{i}}$ converges to $\zeta^{\oplus}_{\ast}$ and $\phi_{\ast}$ is an accumulation point of $\brc{\phi_{i}}$, \footnote{Note that one can always take such an accumulation point $\phi_{\ast}$ in finite-dimensional cases. Indeed, for any $i$ sufficiently large, the definitions of $\partial l\paren{\zeta^{\oplus}_{\ast}}$ and $l^{\circ}$ ensure that $\phi_{i}\sbra{\xi^{\oplus}} \leq l^{\circ}\paren{\zeta^{\oplus}_{i}; \xi^{\oplus}} \leq L \norm{\xi^{\oplus}}$ holds for all $\xi^{\oplus}\in\tanspc[x]\mani^{2}$, where $L$ is the Lipschitz constant of $l$ near $\zeta^{\oplus}_{\ast}$. Thus, by using the dual norm on $\cotanspc[x]\mani^{2}$, we see that $\norm{\phi_{i}} \leq L$ holds for all $i$ sufficiently large, which ensures the existence of a convergent subsequence.} one has $\phi_{\ast}\in\partial l\paren{\zeta^{\oplus}_{\ast}}$.
\end{proposition}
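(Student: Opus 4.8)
The plan is to verify the membership $\phi_{\ast}\in\partial l\paren{\zeta^{\oplus}_{\ast}}$ directly from its definition, namely to check that $\phi_{\ast}\sbra{\xi^{\oplus}} \leq l^{\circ}\paren{\zeta^{\oplus}_{\ast};\xi^{\oplus}}$ holds for every direction $\xi^{\oplus}\in\tanspc[x]\mani^{2}$. Passing to a subsequence, I may assume without loss of generality that $\phi_{i}\rightarrow\phi_{\ast}$; note that $\zeta^{\oplus}_{i}\rightarrow\zeta^{\oplus}_{\ast}$ remains valid along this subsequence. For each $i$ the hypothesis $\phi_{i}\in\partial l\paren{\zeta^{\oplus}_{i}}$ furnishes the inequality $\phi_{i}\sbra{\xi^{\oplus}} \leq l^{\circ}\paren{\zeta^{\oplus}_{i};\xi^{\oplus}}$, and the idea is simply to let $i\rightarrow\infty$ on both sides.

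The decisive ingredient is the upper semicontinuity of the map $\zeta^{\oplus}\mapsto l^{\circ}\paren{\zeta^{\oplus};\xi^{\oplus}}$ at $\zeta^{\oplus}_{\ast}$ for each fixed $\xi^{\oplus}$, i.e., $\limsup_{i\rightarrow\infty} l^{\circ}\paren{\zeta^{\oplus}_{i};\xi^{\oplus}} \leq l^{\circ}\paren{\zeta^{\oplus}_{\ast};\xi^{\oplus}}$. To establish this, I would invoke the local Lipschitz continuity of $l$: there are a neighborhood of $\zeta^{\oplus}_{\ast}$ and a constant $L\geq 0$ on which $l$ is $L$-Lipschitz, so that for $i$ large the difference quotients defining $l^{\circ}$ are bounded by $L\norm{\xi^{\oplus}}$, keeping everything finite. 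Then, for each large $i$, the definition of $l^{\circ}\paren{\zeta^{\oplus}_{i};\xi^{\oplus}}$ as a $\limsup$ lets me select $\chi^{\oplus}_{i}\in\tanspc[x]\mani^{2}$ and $t_{i}>0$ with $\norm{\chi^{\oplus}_{i}-\zeta^{\oplus}_{i}}\leq 1\slash i$ and $t_{i}\leq 1\slash i$ such that the quotient $\paren{l\paren{\chi^{\oplus}_{i}+t_{i}\xi^{\oplus}} - l\paren{\chi^{\oplus}_{i}}}\slash t_{i}$ exceeds $l^{\circ}\paren{\zeta^{\oplus}_{i};\xi^{\oplus}} - 1\slash i$. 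Since $\chi^{\oplus}_{i}\rightarrow\zeta^{\oplus}_{\ast}$ and $t_{i}\downarrow 0$, this sequence of quotients is admissible in the outer $\limsup$ that defines $l^{\circ}\paren{\zeta^{\oplus}_{\ast};\xi^{\oplus}}$, which yields the claimed upper semicontinuity after letting $i\rightarrow\infty$.

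With this in hand, the conclusion follows by taking the limit superior in $\phi_{i}\sbra{\xi^{\oplus}} \leq l^{\circ}\paren{\zeta^{\oplus}_{i};\xi^{\oplus}}$: the left-hand side converges to $\phi_{\ast}\sbra{\xi^{\oplus}}$ because evaluation at a fixed $\xi^{\oplus}$ is continuous on the finite-dimensional dual space and $\phi_{i}\rightarrow\phi_{\ast}$, while the right-hand side is controlled by the upper semicontinuity step. Hence $\phi_{\ast}\sbra{\xi^{\oplus}} \leq l^{\circ}\paren{\zeta^{\oplus}_{\ast};\xi^{\oplus}}$, and since $\xi^{\oplus}$ was arbitrary, $\phi_{\ast}\in\partial l\paren{\zeta^{\oplus}_{\ast}}$. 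I expect the main obstacle to be the upper semicontinuity argument, and in particular the diagonal selection of the pairs $\paren{\chi^{\oplus}_{i},t_{i}}$ together with the verification that they constitute an admissible approximating sequence for $l^{\circ}\paren{\zeta^{\oplus}_{\ast};\xi^{\oplus}}$; the remaining limit passages are routine. Since the whole statement is merely the restriction of \cite[Proposition 2.1.5 (b)]{Clarke90OptNonSmoothAnal} to the finite-dimensional inner product space $\tanspc[x]\mani^{2}$, one could alternatively appeal to that reference directly.
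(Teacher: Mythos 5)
Your proof is correct. Note that the paper itself offers no argument for this proposition at all: it is stated as a direct import of \cite[Proposition 2.1.5 (b)]{Clarke90OptNonSmoothAnal}, translated to the space $\tanspc[x]\mani^{2}$ --- which is the fallback you mention in your last sentence. What you have written out is, in effect, the classical proof behind that citation: closedness of the graph of $\partial l$ reduces to upper semicontinuity of $\zeta^{\oplus}\mapsto l^{\circ}\paren{\zeta^{\oplus};\xi^{\oplus}}$ for each fixed $\xi^{\oplus}$ (this is Clarke's Proposition 2.1.1 (b)), and your diagonal selection of pairs $\paren{\chi^{\oplus}_{i},t_{i}}$ with $\norm{\chi^{\oplus}_{i}-\zeta^{\oplus}_{i}}\leq 1\slash i$, $t_{i}\leq 1\slash i$, and difference quotient within $1\slash i$ of $l^{\circ}\paren{\zeta^{\oplus}_{i};\xi^{\oplus}}$ is exactly the right mechanism: since $\chi^{\oplus}_{i}\rightarrow\zeta^{\oplus}_{\ast}$ and $t_{i}\downarrow 0$, these quotients are admissible for the outer $\limsup$ defining $l^{\circ}\paren{\zeta^{\oplus}_{\ast};\xi^{\oplus}}$, giving $\limsup_{i} l^{\circ}\paren{\zeta^{\oplus}_{i};\xi^{\oplus}}\leq l^{\circ}\paren{\zeta^{\oplus}_{\ast};\xi^{\oplus}}$. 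You also correctly handle the two points where a careless version could leak: the selection step presupposes $l^{\circ}\paren{\zeta^{\oplus}_{i};\xi^{\oplus}}$ finite, which you secure for large $i$ by locating $\zeta^{\oplus}_{i}$ (and, for small $t$, the segment $\sbra{\chi^{\oplus},\chi^{\oplus}+t\xi^{\oplus}}$) inside the neighborhood where $l$ is $L$-Lipschitz; and the passage $\phi_{i}\sbra{\xi^{\oplus}}\rightarrow\phi_{\ast}\sbra{\xi^{\oplus}}$ along the chosen subsequence is legitimate because pointwise evaluation is continuous on the finite-dimensional dual $\cotanspc[x]\mani^{2}$. Compared with the paper's bare citation, your route buys a self-contained verification that Clarke's Euclidean result really does transfer verbatim to the inner product space $\tanspc[x]\mani^{2}$; the citation buys brevity. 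Both are sound.
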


Here, we define the Clarke regularity of functions on $\tanspc[x]\mani^{2}$.
\begin{definition}{\upshape (\cite[Definition 2.3.4]{Clarke90OptNonSmoothAnal})}
    The function $l:\tanspc[x]\mani^{2}\rightarrow\eucli[]$ is said to be Clarke regular at $\zeta^{\oplus}$ provided that, for all $\xi^{\oplus} \in \tanspc[x]\mani^{2}$, the one-sided directional derivative $l^{\prime}\paren{\zeta^{\oplus};\xi^{\oplus}}$ exists and $l^{\prime}\paren{\zeta^{\oplus};\xi^{\oplus}} = l^{\circ}\paren{\zeta^{\oplus};\xi^{\oplus}}$.
\end{definition}

Now let us describe some of the properties of Clarke regularity by tailoring \cite[Proposition 2.3.6, Theorem 2.3.10]{Clarke90OptNonSmoothAnal}.
\begin{proposition}\label{regularityproperties}
    Given $\zeta^{\oplus}\in\tanspc[x]\mani^{2}$, let $l_{1}:\eucli\rightarrow\eucli$ and $l_{2}:\tanspc[x]\mani^{2}\rightarrow\eucli$ be Lipschitz continuous near $l_{2}\paren{\zeta^{\oplus}}$ and $\zeta^{\oplus}$, respectively.
    \begin{enumerate}[{\rm (a)}]
        \item If $l_{1}$ is convex, then $l_{1}$ is Clarke regular at $l_{2}\paren{\zeta^{\oplus}}$. \label{convexregularity}
        \item If $l_{2}$ is continuously differentiable at $\zeta^{\oplus}$ and $l_{1}$ is Clarke regular at $l_{2}\paren{\zeta^{\oplus}}$, then the composite function $l_{1} \circ l_{2} $ is Lipschitz continuous near $\zeta^{\oplus}$ and Clarke regular at $\zeta^{\oplus}$.\label{chainruleregularity}
        \item A finite linear combination by nonnegative scalars of functions being Clarke regular at $\zeta^{\oplus}$ is Clarke regular at $\zeta^{\oplus}$.\label{lincombregularity}
    \end{enumerate}
\end{proposition}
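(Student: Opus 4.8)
The plan is to reduce each of the three assertions to its classical Euclidean counterpart in \cite{Clarke90OptNonSmoothAnal} by exploiting the fact that $\tanspc[x]\mani^{2}$ is a finite-dimensional inner product space. First I would fix an orthonormal basis of $\tanspc[x]\mani^{2}$ with respect to the product inner product, which furnishes a linear isometry $\Phi:\tanspc[x]\mani^{2}\rightarrow\eucli[2d]$ onto $\eucli[2d]$ equipped with the standard inner product. The crucial preliminary observation is that $\Phi$ leaves invariant every notion entering the definition of Clarke regularity. Since $\Phi$ preserves the norm, it preserves Lipschitz constants; and since $\Phi$ is linear, for any $l:\tanspc[x]\mani^{2}\rightarrow\eucli[]$ and $\tilde{l}\coloneqq l\circ\Phi^{-1}$ the limit definitions give directly $\tilde{l}^{\prime}\paren{\Phi\paren{\zeta^{\oplus}};\Phi\paren{\xi^{\oplus}}} = l^{\prime}\paren{\zeta^{\oplus};\xi^{\oplus}}$ and $\tilde{l}^{\circ}\paren{\Phi\paren{\zeta^{\oplus}};\Phi\paren{\xi^{\oplus}}} = l^{\circ}\paren{\zeta^{\oplus};\xi^{\oplus}}$ for all $\zeta^{\oplus},\xi^{\oplus}$. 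Consequently $l$ is Clarke regular at $\zeta^{\oplus}$ if and only if $\tilde{l}$ is Clarke regular at $\Phi\paren{\zeta^{\oplus}}$ in the usual Euclidean sense, so each statement may be established for $\tilde{l}$ over $\eucli[2d]$ and then transported back.

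With this identification in hand, I would dispatch the three parts as follows. For (\ref{convexregularity}), the function $l_{1}:\eucli\rightarrow\eucli$ is already defined on a Euclidean space, so no transport is needed: a convex function on $\eucli$ is Clarke regular at every point of its domain, which is exactly \cite[Proposition 2.3.6]{Clarke90OptNonSmoothAnal}, applied at the point $l_{2}\paren{\zeta^{\oplus}}$. For (\ref{chainruleregularity}), writing $\tilde{l}_{2}\coloneqq l_{2}\circ\Phi^{-1}:\eucli[2d]\rightarrow\eucli$, I note that $\tilde{l}_{2}$ is continuously differentiable at $\Phi\paren{\zeta^{\oplus}}$ because $\Phi^{-1}$ is linear, while $l_{1}$ is Clarke regular at $l_{2}\paren{\zeta^{\oplus}}=\tilde{l}_{2}\paren{\Phi\paren{\zeta^{\oplus}}}$ by hypothesis; the chain rule \cite[Theorem 2.3.10]{Clarke90OptNonSmoothAnal} for a regular scalar function composed with a smooth inner map then yields that $l_{1}\circ\tilde{l}_{2}$ is Lipschitz near and Clarke regular at $\Phi\paren{\zeta^{\oplus}}$, and transporting back through $\Phi$ gives the claim for $l_{1}\circ l_{2}$. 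For (\ref{lincombregularity}), I would transport each summand via $\Phi$ and invoke the regularity of nonnegative linear combinations in \cite[Proposition 2.3.6]{Clarke90OptNonSmoothAnal}.

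The bulk of the work, and the only genuinely nonroutine step, is the invariance claim of the first paragraph: one must check that the $\limsup$ defining $l^{\circ}$, the one-sided derivative $l^{\prime}$, and (if one wishes to track it) the generalized gradient $\partial l\subseteq\cotanspc[x]\mani^{2}$ all correspond under $\Phi$ to their Euclidean analogues. This is where care is needed, because the definitions in the excerpt are phrased intrinsically on $\tanspc[x]\mani^{2}$ rather than on $\eucli[2d]$; the subtle point is the dual pairing, for which one uses that the pullback $\phi\mapsto\phi\circ\Phi^{-1}$ is a bijection between $\cotanspc[x]\mani^{2}$ and the dual of $\eucli[2d]$ preserving dual norms, so that $\phi\in\partial l\paren{\zeta^{\oplus}}$ holds if and only if $\phi\circ\Phi^{-1}$ lies in the Euclidean generalized gradient of $\tilde{l}$ at $\Phi\paren{\zeta^{\oplus}}$. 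Once this invariance is settled, the regularity equality $l^{\prime}=l^{\circ}$ transfers in both directions and the three conclusions follow immediately from the cited results of \cite{Clarke90OptNonSmoothAnal}.
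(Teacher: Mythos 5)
Your proposal is correct, and it arrives at exactly the two results of Clarke that the paper relies on, but by a more roundabout route than the paper itself. The paper gives no standalone proof of this proposition: it presents the statement as a direct ``tailoring'' of \cite[Proposition 2.3.6, Theorem 2.3.10]{Clarke90OptNonSmoothAnal}, which is legitimate because Clarke's theory is developed for locally Lipschitz functions on arbitrary Banach spaces, and $\tanspc[x]\mani^{2}$ with the product inner-product norm is such a space --- the paper's definitions of $l^{\circ}$, $\partial l$, and Clarke regularity on $\tanspc[x]\mani^{2}$ are verbatim Clarke's, so his results apply with no coordinate transfer at all. Your extra machinery --- fixing an orthonormal basis, obtaining a linear isometry $\Phi:\tanspc[x]\mani^{2}\rightarrow\eucli[2d]$, and verifying invariance of $l^{\prime}$, $l^{\circ}$, and the dual pairing $\phi\mapsto\phi\circ\Phi^{-1}$ under $\Phi$ --- is therefore superfluous under Clarke's formulation, though every piece of it is sound: linearity and norm preservation do yield the stated identities for $l^{\prime}$ and $l^{\circ}$, and the pullback does identify $\partial l\paren{\zeta^{\oplus}}$ with the Euclidean generalized gradient. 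What your route buys is self-containedness if one only admits Euclidean versions of Clarke's theorems; what the paper's route buys is brevity. Two details worth keeping straight if you retain your version: in part (b), Clarke's Theorem 2.3.10 assumes the inner map is \emph{strictly} differentiable, so you should note explicitly that continuous differentiability of $l_{2}$ (hence of $\tilde{l}_{2}\coloneqq l_{2}\circ\Phi^{-1}$) implies strict differentiability there; and your observation in part (a) that $l_{1}$ already lives on $\eucli$, so no transport is needed and the claim is just regularity of a convex function at the real number $l_{2}\paren{\zeta^{\oplus}}$, is exactly right.
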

We also have the following mean-value theorem for nonsmooth functions, from \cite[Theorem 2.3.7]{Clarke90OptNonSmoothAnal}. 
\begin{theorem} \label{LebourgMeanValue}
    Let $\xi^{\oplus}, \zeta^{\oplus}\in\tanspc[x]\mani^{2}$ and $t\in\eucli[]\backslash \brc{0} $. Suppose that $l:\tanspc[x]\mani^{2}\rightarrow \eucli$ is Lipschitz continuous on an open set containing the line segment $\sbra{\zeta^{\oplus}, \zeta^{\oplus} + t\xi^{\oplus}}$. Then, there exists some $s\in\paren{0, t}$ such that
    \begin{align*}
        \frac{1}{t}\paren{l\paren{\zeta^{\oplus} + t\xi^{\oplus}} - l\paren{\zeta^{\oplus}}} \in \partial l\paren{\zeta^{\oplus} + s\xi^{\oplus}}\sbra{\xi^{\oplus}},
    \end{align*}
    where $\partial l\paren{\zeta^{\oplus} + s\xi^{\oplus}}\sbra{\xi^{\oplus}} = \brc{\phi\sbra{\xi^{\oplus}} \in \eucli \relmiddle{|} \phi\in \partial l\paren{\zeta^{\oplus} + s\xi^{\oplus}}}$.
\end{theorem}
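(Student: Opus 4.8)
The plan is to reprove this as the Lebourg mean value theorem, adapted to the finite-dimensional inner product space $\tanspc[x]\mani^{2}$, by reducing the multivariate statement to a one-dimensional Rolle-type argument through an auxiliary scalar function. First I would define $\psi:\sbra{0,1}\rightarrow\eucli$ by
\[
    \psi\paren{\lambda} \coloneqq l\paren{\zeta^{\oplus} + \lambda t \xi^{\oplus}} + \lambda\paren{l\paren{\zeta^{\oplus}} - l\paren{\zeta^{\oplus} + t\xi^{\oplus}}}.
\]
Since the affine curve $\lambda\mapsto \zeta^{\oplus}+\lambda t\xi^{\oplus}$ has its image inside the open set on which $l$ is Lipschitz, $\psi$ is Lipschitz, hence continuous, on $\sbra{0,1}$, and a direct substitution shows $\psi\paren{0}=\psi\paren{1}=l\paren{\zeta^{\oplus}}$.

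Next, by compactness of $\sbra{0,1}$ and continuity, $\psi$ attains both its maximum and minimum. Because $\psi\paren{0}=\psi\paren{1}$, either $\psi$ is constant, in which case every $\lambda_{0}\in\paren{0,1}$ is a local extremizer, or at least one of the two extreme values differs from the common endpoint value and is therefore attained at an interior point $\lambda_{0}\in\paren{0,1}$. At such a local extremizer of the Lipschitz function $\psi$, the Fermat-type necessary optimality condition for generalized gradients yields $0\in\partial\psi\paren{\lambda_{0}}$.

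The remaining step is to expand $\partial\psi\paren{\lambda_{0}}$ via the calculus of generalized gradients. As $\lambda\mapsto \zeta^{\oplus}+\lambda t\xi^{\oplus}$ is smooth (affine), the chain rule gives that the generalized gradient of $\lambda\mapsto l\paren{\zeta^{\oplus}+\lambda t\xi^{\oplus}}$ at $\lambda_{0}$ is contained in $\brc{t\,\phi\sbra{\xi^{\oplus}}\relmiddle{|}\phi\in\partial l\paren{\zeta^{\oplus}+\lambda_{0}t\xi^{\oplus}}}$, while the added linear term contributes the constant slope $l\paren{\zeta^{\oplus}}-l\paren{\zeta^{\oplus}+t\xi^{\oplus}}$ through the sum rule. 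Hence $0\in\partial\psi\paren{\lambda_{0}}$ forces the existence of $\phi\in\partial l\paren{\zeta^{\oplus}+\lambda_{0}t\xi^{\oplus}}$ with $t\,\phi\sbra{\xi^{\oplus}}+l\paren{\zeta^{\oplus}}-l\paren{\zeta^{\oplus}+t\xi^{\oplus}}=0$. Setting $s\coloneqq\lambda_{0}t$, which lies strictly between $0$ and $t$, and rearranging gives $\phi\sbra{\xi^{\oplus}}=\frac{1}{t}\paren{l\paren{\zeta^{\oplus}+t\xi^{\oplus}}-l\paren{\zeta^{\oplus}}}$ with $\phi\in\partial l\paren{\zeta^{\oplus}+s\xi^{\oplus}}$, which is exactly the asserted membership.

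The main obstacle is that the inner-product-space generalized-gradient calculus I rely on, namely the Fermat rule at extrema, the sum rule, and the smooth-map chain rule, is not among the facts recorded in the excerpt (only Proposition~\ref{prop:genegradclosedmapping} and the regularity statements of Proposition~\ref{regularityproperties} appear there), so these three tools must be invoked from \cite{Clarke90OptNonSmoothAnal} after observing that $\tanspc[x]\mani^{2}$ is a $2d$-dimensional real inner product space on which Clarke's Euclidean theory transfers verbatim. A secondary care point is the sign bookkeeping when $t<0$, which the parametrization handles uniformly since $s=\lambda_{0}t$ automatically lies in the open interval with endpoints $0$ and $t$.
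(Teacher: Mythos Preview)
The paper does not supply its own proof of this theorem: it is stated as a direct import from \cite[Theorem 2.3.7]{Clarke90OptNonSmoothAnal}, so there is nothing in the paper to compare against beyond that citation. Your argument is correct and is in fact the standard proof of Lebourg's mean value theorem given in Clarke's book, carried over verbatim to the $2d$-dimensional inner product space $\tanspc[x]\mani^{2}$. The auxiliary function $\psi$, the Rolle-type selection of an interior extremizer, the Fermat rule $0\in\partial\psi(\lambda_0)$, and the subsequent use of the sum and chain rules are exactly Clarke's steps; your closing remarks about importing those calculus rules from \cite{Clarke90OptNonSmoothAnal} and about the sign of $t$ are accurate and appropriate.
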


\subsection{Proof of Proposition~\ref{LimBIneqLimP}}
Throughout this subsection, we will reuse the notation in Proposition~\ref{LimBIneqLimP}.
In particular, recall that $\brc{(x_{k},\mu_k,\lambda_k, \alpha_{k})}_{k\in\mathcal{K}}$ is a subsequence converging to an accumulation point $(x^{\ast},\mu^{\ast},\lambda^{\ast}, 0)$.

\newcommand{\injopenball}{\mathbb{B}_{x^{\ast}}}
\newcommand{\geodesicopenball}{\mathcal{V}}
Define $\mathbb{B}_{x^{\ast}} \coloneqq \brc{ \xi\in\tanspc[x^{\ast}]\mani \relmiddle{|} \norm{\xi} < \text{Inj}\paren{x^{\ast}}}$
 and $\geodesicopenball \coloneqq \text{Exp}_{x^{\ast}}\paren{\injopenball}$. Since $\brc{x_{k}}$ converges to $x^{\ast}$, there exists sufficiently large $\tilde{k}_{2}\paren{\geq \tilde{k}_{1}}$ such that $x_{k}\in\mathcal{V}$ for any $k\geq \tilde{k}_{2}$. For such $k\geq \tilde{k}_{2}$, it holds that the parallel transport with the minimizing geodesic from $x_{k}$ to $x^{\ast}$ is well-defined. Hereafter, we will assume $k\geq\tilde{k}_{2}$. Define $\overline{B_{k}}: \tanspc[x^{\ast}]\mani\rightarrow\tanspc[x^{\ast}]\mani$ and $\overline{\Delta x_{k}^{\ast}}\in\tanspc[x^{\ast}]\mani$ by 
\begin{align*}
    \overline{B_{k}} \coloneqq \Pi_{x_{k}\rightarrow x^{\ast}} \circ B_{k} \circ \Pi_{x^{\ast}\rightarrow  x_{k}} \text{ and } \overline{\Delta x_{k}^{\ast}} \coloneqq \Pi_{x_{k}\rightarrow x^{\ast}} \sbra{\Delta x_{k}^{\ast}}.
\end{align*}

First, let us investigate the existence of accumulation points of $\left\{\overline{B_{k}}\right\}$ and $\left\{\overline{\Delta x_{k}^{\ast}}\right\}$ and their properties.
\begin{lemma}\label{existDxBastwithproperties}
    Under Assumptions A\ref{subprobfeasi}, A\ref{Bkbounded}, and A\ref{genebound}, the following hold:
    \begin{enumerate}[{\rm (a)}]
        \item $\brc{\overline{B_{k}}}$ and $\brc{\overline{\Delta x^{\ast}_{k}}}$ are bounded.\label{subseqconvLineDxkBk}
        \item For every $k$ and any $\xi,\zeta\in\tanspc[x_{k}]\mani$, $\metr[x_{k}]{B_{k}\sbra{\xi}}{\zeta}=\metr[x^{\ast}]{\overline{B_{k}}\sbra{\overline{\xi}}}{\overline{\zeta}}$, where $\overline{\xi}\coloneqq \Pi_{x_{k}\rightarrow x^{\ast}}\sbra{\xi}\in\tanspc[x^{\ast}]\mani$ and $\overline{\zeta}\coloneqq \Pi_{x_{k}\rightarrow x^{\ast}}\sbra{\zeta}\in\tanspc[x^{\ast}]\mani$.\label{statement:BkProdEqBkLine}
    \end{enumerate}
    Let $\overline{B^{\ast}}$ and $\overline{\Delta x^{\ast}}$ be accumulation points of $\brc{\overline{B_{k}}}$ and $\brc{\overline{\Delta x^{\ast}_{k}}}$, respectively.
    \begin{enumerate}[{\rm (a)}]
        \setcounter{enumi}{2}
        \item $\overline{B^{\ast}}$ is symmetric and positive-definite. Additionally, $\paren{\overline{\Delta x^{\ast}}, \mu^{\ast}, \lambda^{\ast}}$ satisfies the KKT conditions \eqref{subprobKKT} of the quadratic optimization problem \eqref{QP} with $(x_k,B_k)$ replaced by $(x^{\ast},\overline{B^{\ast}})$. \label{LineDastBastKKTeq}
    \end{enumerate}
\end{lemma}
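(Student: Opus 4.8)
The unifying idea is that parallel transport carries every $k$-dependent object into the single fixed inner product space $\tanspc[x^{\ast}]\mani$, where boundedness, subsequential limits, and joint continuity of the evaluation map $\paren{\mathcal{A},\nu}\mapsto\mathcal{A}\sbra{\nu}$ are all available; without this device the operators $B_k$, the directions $\Delta x_k^{\ast}$, and the gradients $\text{grad}\,g_i\paren{x_k}$ would live in the varying spaces $\tanspc[x_k]\mani$ and could not be compared. Throughout I would work with $k\geq\tilde{k}_{2}$, so that $\Pi_{x_k\to x^{\ast}}$ is well-defined and $\text{dist}\paren{x_k,x^{\ast}}<\text{Inj}\paren{x^{\ast}}$ makes Lemma~\ref{lem:limParallelGrad} applicable. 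For part (a), since parallel transport is isometric, $\norm{\overline{\Delta x_k^{\ast}}}_{x^{\ast}}=\norm{\Delta x_k^{\ast}}_{x_k}$, so Proposition~\ref{DeltaxkBounded} gives boundedness of $\brc{\overline{\Delta x_k^{\ast}}}$ at once; and the isometry of $\Pi_{x_k\to x^{\ast}}$ and of its inverse yields $\norm{\overline{B_k}}_{\rm op}\leq\norm{B_k}_{\rm op}$, which Lemma~\ref{AkOperNormBounded} with Assumption~A\ref{Bkbounded} bounds by $M\sqrt{d}$. Finite-dimensionality of $\tanspc[x^{\ast}]\mani$ then guarantees that the accumulation points $\overline{B^{\ast}}$ and $\overline{\Delta x^{\ast}}$ exist.

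For part (b), I would first simplify $\overline{B_k}\sbra{\overline{\xi}}=\Pi_{x_k\to x^{\ast}}\circ B_k\circ\Pi_{x^{\ast}\to x_k}\sbra{\Pi_{x_k\to x^{\ast}}\sbra{\xi}}=\Pi_{x_k\to x^{\ast}}\sbra{B_k\sbra{\xi}}$, using $\Pi_{x^{\ast}\to x_k}^{-1}=\Pi_{x_k\to x^{\ast}}$, and then invoke the adjoint identity of parallel transport to obtain $\metr[x^{\ast}]{\Pi_{x_k\to x^{\ast}}\sbra{B_k\sbra{\xi}}}{\overline{\zeta}}=\metr[x_k]{B_k\sbra{\xi}}{\Pi_{x^{\ast}\to x_k}\sbra{\overline{\zeta}}}=\metr[x_k]{B_k\sbra{\xi}}{\zeta}$, which is exactly the claimed identity. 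This is a direct computation.

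For part (c), I would deduce symmetry and uniform positive-definiteness of each $\overline{B_k}$ from part (b): symmetry follows from $\metr[x^{\ast}]{\overline{B_k}\sbra{\overline{\xi}}}{\overline{\zeta}}=\metr[x_k]{B_k\sbra{\xi}}{\zeta}=\metr[x_k]{\xi}{B_k\sbra{\zeta}}=\metr[x^{\ast}]{\overline{\xi}}{\overline{B_k}\sbra{\overline{\zeta}}}$, and the bounds $m\norm{\cdot}^{2}\leq\metr[x^{\ast}]{\overline{B_k}\sbra{\cdot}}{\cdot}\leq M\norm{\cdot}^{2}$ follow from part (b), the isometry of parallel transport, and the fact that $\overline{\xi}$ ranges over all of $\tanspc[x^{\ast}]\mani$ as $\xi$ does over $\tanspc[x_k]\mani$. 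As both are closed conditions, they pass to the accumulation point $\overline{B^{\ast}}$, giving symmetry and positive-definiteness. For the KKT system I would transport the stationarity equation \eqref{subprobKKTLagsimple} by $\Pi_{x_k\to x^{\ast}}$, rewrite its Hessian term as $\overline{B_k}\sbra{\overline{\Delta x_k^{\ast}}}$, and pass to the limit along a subsequence on which $\overline{B_k}\to\overline{B^{\ast}}$ and $\overline{\Delta x_k^{\ast}}\to\overline{\Delta x^{\ast}}$; Lemma~\ref{lem:limParallelGrad} supplies $\Pi_{x_k\to x^{\ast}}\sbra{\text{grad}\,f\paren{x_k}}\to\text{grad}\,f\paren{x^{\ast}}$ and likewise for each $g_i$ and $h_j$, while $\mu_k^{\ast}\to\mu^{\ast}$ and $\lambda_k^{\ast}\to\lambda^{\ast}$. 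For conditions \eqref{subprobKKTineq}--\eqref{subprobKKTeq} I would rewrite each inner product as $\metr[x_k]{\text{grad}\,g_i\paren{x_k}}{\Delta x_k^{\ast}}=\metr[x^{\ast}]{\Pi_{x_k\to x^{\ast}}\sbra{\text{grad}\,g_i\paren{x_k}}}{\overline{\Delta x_k^{\ast}}}$ and let $k\to\infty$, using continuity of $g_i$ and $h_j$ for the constant terms; the sign conditions and the complementarity equalities are preserved under limits.

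The main obstacle is the single limit $\overline{B_k}\sbra{\overline{\Delta x_k^{\ast}}}\to\overline{B^{\ast}}\sbra{\overline{\Delta x^{\ast}}}$, in which both the operator and its argument move. This is precisely where transporting into the fixed space $\tanspc[x^{\ast}]\mani$ is indispensable: the estimate $\norm{\overline{B_k}\sbra{\overline{\Delta x_k^{\ast}}}-\overline{B^{\ast}}\sbra{\overline{\Delta x^{\ast}}}}\leq\norm{\overline{B_k}}_{\rm op}\norm{\overline{\Delta x_k^{\ast}}-\overline{\Delta x^{\ast}}}+\norm{\paren{\overline{B_k}-\overline{B^{\ast}}}\sbra{\overline{\Delta x^{\ast}}}}$, combined with the uniform operator bound from part (a), forces the left-hand side to zero. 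Everything else in the limit reduces to the continuity of the fixed limiting gradients under parallel transport and of the constraint functions.
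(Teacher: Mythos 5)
Your proposal is correct and follows essentially the same route as the paper's proof: isometry of parallel transport plus Lemma~\ref{AkOperNormBounded} and Proposition~\ref{DeltaxkBounded} for part (a), a direct transport computation for part (b) (you invoke the adjoint identity where the paper uses inner-product preservation, which is equivalent), and transporting the KKT system \eqref{subprobKKTLagsimple}--\eqref{subprobKKTeq} and passing to the limit via Lemma~\ref{lem:limParallelGrad} for part (c). Your explicit estimate for the joint limit $\overline{B_k}\sbra{\overline{\Delta x_k^{\ast}}}\to\overline{B^{\ast}}\sbra{\overline{\Delta x^{\ast}}}$ is a welcome clarification of a step the paper leaves implicit.
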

\begin{proof}
    As for statement \eqref{subseqconvLineDxkBk}, it holds that
    \begin{align*}
    \begin{split}
    \norm{B_{k}}_{\rm op}
    &= \max_{\xi\in\tanspc[x_{k}]\mani} \frac{\norm{B_{k}\sbra{\xi}}_{x_{k}}}{\norm{\xi}_{x_{k}}}\\
    &= \max_{\xi\in\tanspc[x_{k}]\mani} \frac{\norm{\Pi_{x_{k}\rightarrow x^{\ast}} \circ B_{k} \circ \Pi_{x^{\ast}\rightarrow  x_{k}} \sbra{\Pi_{x_{k}\rightarrow x^{\ast}}\sbra{\xi}} }_{x^{\ast}}}{\norm{\Pi_{x_{k}\rightarrow x^{\ast}}\sbra{\xi}}_{x^{\ast}}}\\
    &= \max_{\overline{\xi}\in\tanspc[x^{\ast}]\mani} \frac{\norm{\overline{B_{k}}\sbra{\overline{\xi}}}_{x^{\ast}}}{\norm{\overline{\xi}}_{x^{\ast}}}\\ 
    &= \norm{\overline{B_{k}}}_{\rm op},
    \end{split}
    \end{align*}
    where the second equality follows from the isometry of $\Pi_{x_{k}\rightarrow x^{\ast}}$ and $\Pi_{x^{\ast}\rightarrow x_{k}}^{-1} = \Pi_{x_{k}\rightarrow x^{\ast}}$, and the third one follows from the fact that $\Pi_{x_{k}\rightarrow x^{\ast}}$ is bijective. 
    Thus, by using Assumption~A\ref{Bkbounded} and Lemma~\ref{AkOperNormBounded} with $\mathcal{A}_x=B_k$, $\brc{\overline{B_{k}}}$ is bounded. Similarly, from the isometry of the parallel transport $\Pi_{x_{k}\rightarrow x^{\ast}}$, we have $\norm{\overline{\Delta x^{\ast}_{k}}}_{x^{\ast}}=\norm{\Delta x^{\ast}_{k}}_{x_k}$, and hence, by Proposition~\ref{DeltaxkBounded}, $\brc{\overline{\Delta x^{\ast}_{k}}}$ is bounded.
    
    Note that since $\brc{\overline{B_{k}}}$ and $\brc{\overline{\Delta x^{\ast}_{k}}}$ are bounded sequences contained in {\it fixed} finite-dimensional normed vector spaces, there exist convergent subsequences of $\brc{\overline{B_{k}}}$ and $\brc{\overline{\Delta x^{\ast}_{k}}}$. Let $\overline{B^{\ast}}$ and $\overline{\Delta x^{\ast}}$ be accumulation points of $\brc{\overline{B_{k}}}$ and $\brc{\overline{\Delta x^{\ast}_{k}}}$, respectively.
    
    As for statements \eqref{statement:BkProdEqBkLine} and \eqref{LineDastBastKKTeq}, for any $k$ and $\xi, \zeta\in\tanspc[x_{k}]\mani$, we have 
    \begin{align*}
        \metr[x_{k}]{B_{k}\sbra{\xi}}{\zeta} = \metr[x_{k}]{\Pi_{x_{k}\rightarrow x^{\ast}} \circ B_{k} \circ \Pi_{x^{\ast}\rightarrow  x_{k}} \sbra{\Pi_{x_{k}\rightarrow x^{\ast}}\xi} }{\Pi_{x_{k}\rightarrow x^{\ast}}\zeta} = \metr[x^{\ast}]{\overline{B_{k}}\sbra{\overline{\xi}}}{\overline{\zeta}}, 
    \end{align*}
    where $\overline{\xi} = \Pi_{x_{k}\rightarrow x^{\ast}}\sbra{\xi}\in\tanspc[x^{\ast}]\mani$ and $\overline{\zeta} = \Pi_{x_{k}\rightarrow x^{\ast}}\sbra{\zeta}\in\tanspc[x^{\ast}]\mani$, which ensures that statement \eqref{statement:BkProdEqBkLine} is true. Since $\xi\in\tanspc[x_{k}]\mani$ has been chosen arbitrarily and $\Pi_{x_{k}\rightarrow x^{\ast}}$ is bijective, the symmetry of $B_{k}$ induces that of $\overline{B_{k}}$ by using statement \eqref{statement:BkProdEqBkLine}, and moreover, the uniform positive-definiteness in Assumption~A\ref{Bkbounded} with $B_{k}$ replaced by $\overline{B_{k}}$ is valid. Thus, symmetry and positive-definiteness are kept at the accumulation point $\overline{B^{\ast}}$ of $\left\{\overline{B_k}\right\}$. In addition, for each $k$, the optimal solution $\Delta x^{\ast}_{k}$ of subproblem \eqref{QP} satisfies the KKT conditions \eqref{subprobKKT}, which can be represented as
    \begin{align*}
        &\overline{B_{k}}\sbra{\overline{\Delta x^{\ast}_{k}}} + \Pi_{x_{k}\rightarrow x^{\ast}}\sbra{\text{\upshape grad}\,f\paren{x_{k}}}\\
        &\qquad + \sum_{i\in\mathcal{I}} \mu_{ki}^{\ast} \Pi_{x_{k}\rightarrow x^{\ast}}\sbra{\text{\upshape grad}\,g_{i}\paren{x_{k}}} + \sum_{j\in\mathcal{E}}\lambda_{kj}^{\ast}\Pi_{x_{k}\rightarrow x^{\ast}} \sbra{\text{\upshape grad}\,h_{j}\paren{x_{k}}} = 0,\\
        &\mu^{\ast}_{ki}\geq 0, \, g_{i}\paren{x_{k}} + \metr[]{\Pi_{x_{k}\rightarrow x^{\ast}}\sbra{\text{grad}\,g_{i}\paren{x_{k}}}}{\overline{\Delta x^{\ast}_{k}}}\leq 0,\text{ and}\\
        &\mu^{\ast}_{ki}\paren{g_{i}\paren{x_{k}} + \metr[]{\Pi_{x_{k}\rightarrow x^{\ast}}\sbra{\text{grad}\,g_{i}\paren{x_{k}}}}{\overline{\Delta x^{\ast}_{k}}}} = 0, \text{ for all } i\in\mathcal{I},\\
        & h_{j}\paren{x_{k}} + \metr[]{\Pi_{x_{k}\rightarrow x^{\ast}}\sbra{\text{grad}\,h_{j}\paren{x_{k}}}}{\overline{\Delta x^{\ast}_{k}}} = 0, \text{ for all } j\in\mathcal{E}.
    \end{align*}
    By letting $k$ go to infinity in the above and recalling that $\brc{\paren{\overline{B_k},\overline{\Delta x_{k}^{\ast}}}}$ accumulates at $\brc{\paren{\overline{B^{\ast}}, \overline{\Delta x^{\ast}}}}$, it follows from Lemma~\ref{lem:limParallelGrad} that
    \begin{align*}
        &\overline{B^{\ast}}\sbra{\overline{\Delta x^{\ast}}} + \text{\upshape grad}\,f\paren{x^{\ast}} + \sum_{i\in\mathcal{I}} \mu_{i}^{\ast} \text{\upshape grad}\,g_{i}\paren{x^{\ast}} + \sum_{j\in\mathcal{E}}\lambda_{j}^{\ast} \text{\upshape grad}\,h_{j}\paren{x^{\ast}} = 0,\\
        &\mu^{\ast}_{i}\geq 0, \, g_{i}\paren{x^{\ast}} + \metr[]{\text{grad}\,g_{i}\paren{x^{\ast}}}{\overline{\Delta x^{\ast}}} \leq 0,\text{ and}\\
        &\mu^{\ast}_{i}\paren{g_{i}\paren{x^{\ast}} + \metr[]{\text{grad}\,g_{i}\paren{x^{\ast}}}{\overline{\Delta x^{\ast}}}} = 0, \text{ for all } i\in\mathcal{I},\\
        & h_{j}\paren{x^{\ast}} + \metr[]{\text{grad}\,h_{j}\paren{x^{\ast}}}{\overline{\Delta x^{\ast}}} = 0, \text{ for all } j\in\mathcal{E},
    \end{align*}
    which ensures that statement \eqref{LineDastBastKKTeq} is true.
\end{proof}

Relevant to the penalty function $P_{\bar{\rho}}$, we define the following functions
\begin{align*}
    C\paren{\xi\oplus\zeta} \coloneqq R_{\text{Exp}_{x^{\ast}}\paren{\xi}}\paren{\Pi_{x^{\ast}\rightarrow\text{Exp}_{x^{\ast}}\paren{\xi}} \sbra{\zeta}} \text{ and } F\paren{\xi\oplus\zeta} \coloneqq P_{\bar{\rho}}\circ C\paren{\xi \oplus \zeta}
\end{align*}
for $\xi\in \mathbb{B}_{x^{\ast}} \subseteq \tanspc[x^{\ast}]\mani$ and $\zeta \in\tanspc[x^{\ast}]\mani$. Recall $\injopenball = \brc{ \xi\in\tanspc[x^{\ast}]\mani \relmiddle{|} \norm{\xi} < \text{Inj}\paren{x^{\ast}}}
$.
As shown in the next lemma, the function $F$ is actually Clarke regular. This property will play a key role for proving inequality\,\eqref{eq:key2}.
\begin{lemma}\label{lem:clark}
    For all $\xi\in \mathbb{B}_{x^{\ast}}$ and $\zeta \in\tanspc[x^{\ast}]\mani$, the function $F$ is Clarke regular at $\xi\oplus\zeta$. 
\end{lemma}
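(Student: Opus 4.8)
The plan is to exhibit $F$ as a nonnegative linear combination of functions each of which is Clarke regular at $\xi\oplus\zeta$, and then invoke the three regularity properties collected in Proposition~\ref{regularityproperties}.

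First I would establish that the map $C$ is smooth on an open neighborhood of $\xi\oplus\zeta$ in $\tanspc[x^{\ast}]\mani^{2}$. Indeed, for $\xi\in\mathbb{B}_{x^{\ast}}$ the point $\text{Exp}_{x^{\ast}}\paren{\xi}$ lies strictly within the injectivity radius of $x^{\ast}$, so the minimizing geodesic from $x^{\ast}$ to $\text{Exp}_{x^{\ast}}\paren{\xi}$, and hence the parallel transport $\Pi_{x^{\ast}\rightarrow\text{Exp}_{x^{\ast}}\paren{\xi}}$, is well-defined and depends smoothly on $\xi$; combining this with the smoothness of $\text{Exp}$ and of the retraction $R$ shows that $C\paren{\xi\oplus\zeta}=R_{\text{Exp}_{x^{\ast}}\paren{\xi}}\paren{\Pi_{x^{\ast}\rightarrow\text{Exp}_{x^{\ast}}\paren{\xi}}\sbra{\zeta}}$ is a smooth $\mani$-valued map of $\xi\oplus\zeta$. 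Consequently the real-valued compositions $f\circ C$, $g_{i}\circ C$ $(i\in\mathcal{I})$, and $h_{j}\circ C$ $(j\in\mathcal{E})$ are all continuously differentiable near $\xi\oplus\zeta$.

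Next I would write out, using definition~\eqref{ell1merit} of $P_{\bar{\rho}}$,
\begin{align*}
    F\paren{\xi\oplus\zeta} = \paren{f\circ C}\paren{\xi\oplus\zeta} + \bar{\rho}\sum_{i\in\mathcal{I}}\max\brc{0,\paren{g_{i}\circ C}\paren{\xi\oplus\zeta}} + \bar{\rho}\sum_{j\in\mathcal{E}}\abs{\paren{h_{j}\circ C}\paren{\xi\oplus\zeta}},
\end{align*}
and argue term by term. The function $f\circ C$ is continuously differentiable, hence Clarke regular: this follows from the chain-rule part of Proposition~\ref{regularityproperties} applied with $f\circ C$ as the smooth inner map and the convex (thus Clarke regular, by the convexity part of Proposition~\ref{regularityproperties}) identity map on $\eucli$ as the outer map. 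For each $i\in\mathcal{I}$, the scalar map $t\mapsto\max\brc{0,t}$ is convex and hence Clarke regular by the convexity part of Proposition~\ref{regularityproperties}, while $g_{i}\circ C$ is continuously differentiable; the chain-rule part of Proposition~\ref{regularityproperties} then yields that $\max\brc{0,g_{i}\circ C}$ is Clarke regular at $\xi\oplus\zeta$. The identical reasoning with the convex map $t\mapsto\abs{t}$ shows that each $\abs{h_{j}\circ C}$ is Clarke regular at $\xi\oplus\zeta$.

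Finally, since $\bar{\rho}>0$, $F$ is a finite linear combination with nonnegative coefficients of functions that are Clarke regular at $\xi\oplus\zeta$, so the linear-combination part of Proposition~\ref{regularityproperties} delivers the claim. The only genuinely delicate step is the first one: verifying that $C$ is smooth, and in particular that $\xi\mapsto\Pi_{x^{\ast}\rightarrow\text{Exp}_{x^{\ast}}\paren{\xi}}$ varies smoothly. This is precisely where the restriction $\xi\in\mathbb{B}_{x^{\ast}}$, keeping us inside the injectivity radius so that the minimizing geodesic and its parallel transport are uniquely and smoothly determined, is essential; once the smoothness of $C$ is in hand, the remainder is a routine application of the regularity calculus in Proposition~\ref{regularityproperties}.
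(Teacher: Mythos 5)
Your proposal is correct and follows essentially the same route as the paper: decompose $F$ via \eqref{ell1merit}, prove smoothness of $C$ from the smoothness of the exponential mapping, the retraction, and the parallel transport within the injectivity radius, then apply the regularity calculus of Proposition~\ref{regularityproperties} (convexity of $\max\brc{0,\cdot}$ and $\abs{\cdot}$, the chain rule with the smooth inner map, and nonnegative linear combinations). The only difference is cosmetic: where you argue informally that $\xi\oplus\zeta\mapsto\Pi_{x^{\ast}\rightarrow\text{Exp}_{x^{\ast}}\paren{\xi}}\sbra{\zeta}$ is smooth, the paper cites the proof of \cite[Lemma A.1]{LiuBoumal19Simple} for exactly this fact, and it treats $f\circ C$ as Clarke regular directly from continuous differentiability rather than via your (valid but unnecessary) chain-rule detour through the identity map.
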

\begin{proof}
    Since, by Proposition~\ref{regularityproperties}\eqref{lincombregularity}, a finite linear combination by nonnegative scalars of Clark-regular functions is Lipschitz continuous and Clarke regular, and $F$ is of the form 
    \begin{align*}
        F(\cdot)=f\circ C(\cdot) + \bar{\rho}\left(\sum_{i\in\mathcal{I}} \max\paren{0,g_{i}\circ C(\cdot)}+\sum_{j\in\mathcal{E}}\abs{h_{j}\circ C(\cdot)}\right), 
    \end{align*}
    it suffices to show that each term in $F$ is Clark regular at $\xi\oplus\zeta$ for any $\xi\in \mathbb{B}_{x^{\ast}}$ and $\zeta \in\tanspc[x^{\ast}]\mani$. 
    To this end, we first prove that $C$ is smooth at $\xi\oplus\zeta$ by showing that it is actually a composite function of smooth ones: since the smoothness of the mapping $\paren{x,\zeta} \mapsto \Pi_{x^{\ast} \rightarrow x}\sbra{\zeta}$ follows from the proof of \cite[Lemma A.1]{LiuBoumal19Simple}\footnote{Though in the statement of \cite[Lemma A.1]{LiuBoumal19Simple} the smoothness of the parallel transport at $x$ sufficiently near $x^{\ast}$ is claimed only with respect to $x$, it is in fact proved with respect to both $x$ and $\zeta$ in the proof there.} and both the retraction and the exponential mapping are smooth by definition, we see that $C$ is smooth at $\xi\oplus\zeta$ in view of the definition of $C$. Thus, the functions $f\circ C, \brc{g_{i}\circ C}_{i\in\mathcal{I}}$, and $\brc{h_{j}\circ C}_{j\in\mathcal{E}}$, which are composite functions of continuously differentiable ones, are all continuously differentiable at $\xi\oplus\zeta$, and hence Clarke regular at $\xi\oplus\zeta$. Next, since the functions $\max\paren{0,\cdot}$ and $\abs{\cdot}$ are convex, Proposition~\ref{regularityproperties}\eqref{convexregularity} and \eqref{chainruleregularity} ensure that $\brc{\max\paren{0,g_{i}\circ C}}_{i\in\mathcal{I}}$ and $\brc{\abs{h_{j}\circ C}}_{j\in\mathcal{E}}$ are also Lipschitz continuous near $\xi\oplus\zeta$ and Clarke regular at $\xi\oplus\zeta$. This shows the Clarke regularity of $F$ at $\xi\oplus\zeta$.
\end{proof}

Now we are ready to prove Proposition~\ref{LimBIneqLimP}.
\begin{proof}[Proof of Proposition~\ref{LimBIneqLimP}]
    To begin with, extract a subsequence $\overline{\mathcal{K}}$ from $\mathcal{K}$ such that
    \begin{align}\label{al:okuno3}
        \lim_{k\in\overline{\mathcal{K}}, k\rightarrow \infty} \metr[x^{\ast}]{\overline{B_{k}}\sbra{\overline{\Delta x_{k}^{\ast}}}}{\overline{\Delta x_{k}^{\ast}}} = \limsup_{k\in\mathcal{K}, k\rightarrow \infty} \metr[x^{\ast}]{\overline{B_{k}}\sbra{\overline{\Delta x_{k}^{\ast}}}}{\overline{\Delta x_{k}^{\ast}}}.
    \end{align}
    Letting $v_{k}\coloneqq \text{Exp}^{-1}_{x^{\ast}}\paren{x_{k}}\in\tanspc[x^{\ast}]\mani$ for each $k$, we have $v^{\ast}\coloneqq \lim_{k\in \overline{\mathcal{K}},k\to \infty}v_{k}=\text{Exp}_{x^{\ast}}^{-1}\paren{x^{\ast}} = 0_{x^{\ast}}$ by the smoothness of $\text{Exp}_{x^{\ast}}^{-1}$. 
    Moreover, it follows from Lemma~\ref{existDxBastwithproperties}\eqref{subseqconvLineDxkBk} and $\overline{\mathcal{K}}\subseteq \mathcal{K}$ that the subsequences $\left\{\overline{B_k}\right\}_{k\in\overline{\mathcal{K}}}$ and $\left\{\overline{\Delta x_k}\right\}_{k\in\overline{\mathcal{K}}}$ are bounded.
    Thus, without loss of generality, we can assume that they converge to $\overline{B^{\ast}}$ and $\overline{\Delta x^{\ast}}$, respectively. 
    
    In fact, the desired assertion can be verified by putting together the following facts:
    \begin{enumerate}[(\fact 1)]
        \item ${P_{\bar{\rho}}\circ R_{x_{k}}\paren{\frac{\alpha_{k}}{\beta} \Delta x^{\ast}_{k}} - P_{\bar{\rho}}\paren{x_{k}}}={F\paren{ v_{k}\oplus \frac{\alpha_{k}}{\beta}\overline{\Delta x^{\ast}_{k}}} - F\paren{ v_{k}\oplus 0_{x^{\ast}}}}$,\label{F2:PrhoTransF}
        \item $\displaystyle{\liminf_{k\in\overline{\mathcal{K}}, k\rightarrow \infty}}\frac{\beta}{\alpha_{k}}\paren{ F\paren{v_{k}\oplus\frac{\alpha_{k}}{\beta} \overline{\Delta x^{\ast}_{k}}} - F\paren{v_{k}\oplus 0_{x^{\ast}}}}\le \paren{P_{\bar{\rho}}\circ R_{x^{\ast}}}^{\prime}\paren{0_{x^{\ast}};\overline{\Delta x^{\ast}}}$,\label{F3:liminfFIneqDirPrho}
        \item $\metr[]{\overline{B^{\ast}}\sbra{\overline{\Delta x^{\ast}}}}{\overline{\Delta x^{\ast}}} = \displaystyle{\limsup_{k\in\mathcal{K}, k\rightarrow \infty}}\metr[x_{k}]{B_{k}\sbra{\Delta x_{k}^{\ast}}}{\Delta x_{k}^{\ast}}$. \label{F1:BDeltaxAstEqLimSupBDeltaxk}
    \end{enumerate}
    Indeed, the assertion follows from
    \begin{align*}
       &\limsup_{k\in\mathcal{K}, k\to\infty}\frac{\beta}{\alpha_{k}} \paren{P_{\bar{\rho}}\circ R_{x_{k}}\paren{0_{x_k}}- P_{\bar{\rho}}\circ R_{x_{k}}\paren{\frac{\alpha_{k}}{\beta} \Delta x^{\ast}_{k}}}\\
       &\geq\displaystyle{\limsup_{k\in\overline{\mathcal{K}}, k\rightarrow \infty}}\frac{\beta}{\alpha_{k}}\paren{F\paren{v_{k}\oplus 0_{x^{\ast}}}-F\paren{v_{k}\oplus\frac{\alpha_{k}}{\beta} \overline{\Delta x^{\ast}_{k}}}}\\
       &\geq -\paren{P_{\bar{\rho}}\circ R_{x^{\ast}}}^{\prime}\paren{0_{x^{\ast}};\overline{\Delta x^{\ast}}} \\
       &\geq \metr[]{\overline{B^{\ast}}\sbra{\overline{\Delta x^{\ast}}}}{\overline{\Delta x^{\ast}}}\\ 
       &=\limsup_{k\in\mathcal{K}, k\rightarrow\infty}\metr[x_{k}]{B_{k}\sbra{\Delta x^{\ast}_{k}}}{\Delta x^{\ast}_{k}},
    \end{align*}
    where the first inequality comes from (\fact\ref{F2:PrhoTransF}) and $\mathcal{\overline{K}}\subseteq\mathcal{K}$, the second one from (\fact\ref{F3:liminfFIneqDirPrho}), 
    the third one from  Lemma~\ref{existDxBastwithproperties}\eqref{LineDastBastKKTeq} and Proposition~\ref{prop:meritineq},
    and the equality from (\fact\ref{F1:BDeltaxAstEqLimSupBDeltaxk}). In what follows, we will prove (\fact\ref{F2:PrhoTransF}), (\fact\ref{F3:liminfFIneqDirPrho}), and (\fact\ref{F1:BDeltaxAstEqLimSupBDeltaxk}).

    \textit{Proof of {\rm (\fact\ref{F2:PrhoTransF})}.} 
    Recall the definition of $F$ and the linearity of the parallel transport. (\fact\ref{F2:PrhoTransF}) follows from
    \begin{align*}
        &{P_{\bar{\rho}}\circ R_{x_{k}}\paren{\frac{\alpha_{k}}{\beta} \Delta x^{\ast}_{k}} - P_{\bar{\rho}}\paren{x_{k}}}\\
        &= {P_{\bar{\rho}}\circ R_{x_{k}}\paren{\frac{\alpha_{k}}{\beta} \Delta x^{\ast}_{k}} - P_{\bar{\rho}}\circ R_{x_{k}}\paren{0_{x_k}}}\\
        &=P_{\bar{\rho}}\circ R_{\text{Exp}_{x^{\ast}}\paren{\text{Exp}_{x^{\ast}}^{-1}\paren{x_{k}}}} \paren{\Pi_{x^{\ast}\rightarrow x_{k}} \circ \Pi_{x_{k}\rightarrow x^{\ast}} \sbra{\frac{\alpha_{k}}{\beta}\Delta x^{\ast}_{k}}}\\
        &\qquad \qquad- P_{\bar{\rho}}\circ R_{\text{Exp}_{x^{\ast}}\paren{\text{Exp}_{x^{\ast}}^{-1}\paren{x_{k}}}}\paren{\Pi_{x^{\ast}\rightarrow x_{k}} \circ \Pi_{x_{k}\rightarrow x^{\ast}} \sbra{0_{x_{k}}}} \\
        &= P_{\bar{\rho}}\circ R_{\text{Exp}_{x^{\ast}}\paren{v_{k}}}\paren{\Pi_{x^{\ast}\rightarrow \text{Exp}_{x^{\ast}}\paren{\text{Exp}_{x^{\ast}}^{-1}\paren{x_{k}}}}\sbra{\frac{\alpha_{k}}{\beta}\overline{\Delta x^{\ast}_{k}}}}\\
        &\qquad - P_{\bar{\rho}}\circ R_{\text{Exp}_{x^{\ast}}\paren{v_{k}}}\paren{\Pi_{x^{\ast}\rightarrow \text{Exp}_{x^{\ast}}\paren{\text{Exp}_{x^{\ast}}^{-1}\paren{x_{k}}}}\sbra{0_{x^{\ast}}}}\\
        &= {P_{\bar{\rho}}\circ R_{\text{Exp}_{x^{\ast}}\paren{v_{k}}}\paren{\Pi_{x^{\ast}\rightarrow\text{Exp}_{x^{\ast}}\paren{v_{k}}} \sbra{ \frac{\alpha_{k}}{\beta} \overline{\Delta x^{\ast}_{k}}}} - P_{\bar{\rho}}\circ R_{\text{Exp}_{x^{\ast}}\paren{v_{k}}}\paren{\Pi_{x^{\ast}\rightarrow\text{Exp}_{x^{\ast}}\paren{v_{k}}} \sbra{0_{x^{\ast}}}}}\\
        &={F\paren{ v_{k}\oplus \frac{\alpha_{k}}{\beta}\overline{\Delta x^{\ast}_{k}}} - F\paren{ v_{k}\oplus 0_{x^{\ast}}}}.
    \end{align*}

    \textit{Proof of {\rm (\fact\ref{F3:liminfFIneqDirPrho})}.} 
    Since $F$ is Clarke regular from Lemma \ref{lem:clark}, we can set $\left(l, \tanspc[x]\mani^{2}, \zeta^{\oplus}, \xi^{\oplus}, t\right)$ to $\left(F, \tanspc[x^{\ast}]\mani^2,v_{k}\oplus 0_{x^{\ast}},0_{x^{\ast}}\oplus \overline{\Delta x_{k}^{\ast}},\frac{\alpha_k}{\beta}\right)$ in Theorem~\ref{LebourgMeanValue}, and then have some $s_{k}\in\paren{0,\frac{\alpha_{k}}{\beta}}$ such that
    \begin{align}
        \frac{\beta}{\alpha_{k}}\paren{F\paren{ v_{k}\oplus \frac{\alpha_{k}}{\beta}\overline{\Delta x^{\ast}_{k}}} - F\paren{v_{k}\oplus 0_{x^{\ast}}}} \in \partial F\paren{v_{k}\oplus s_{k}\overline{\Delta x^{\ast}_{k}}}\sbra{0_{x^{\ast}}\oplus \overline{\Delta x_{k}^{\ast}}}.\label{al:okuno2}
    \end{align}
    Note that $\lim_{k\in \overline{\mathcal{K}}, k\to \infty}s_{k}=0$ under the assumption that $\lim_{k\in \mathcal{K},k\to \infty}\alpha_k=0$ and $\overline{\mathcal{K}}\subseteq \mathcal{K}$. It follows from \eqref{al:okuno2} and Proposition~\ref{prop:genegradclosedmapping} that
    \begin{align*}
        \liminf_{k\in\overline{\mathcal{K}}, k\rightarrow \infty} \frac{\beta}{\alpha_{k}}\paren{ F\paren{v_{k}\oplus\frac{\alpha_{k}}{\beta} \overline{\Delta x^{\ast}_{k}}} - F\paren{v_{k}\oplus 0_{x^{\ast}}}} \in \partial F\paren{v^{\ast}\oplus 0_{x^{\ast}}}\sbra{0_{x^{\ast}}\oplus\overline{\Delta x^{\ast}}}.
    \end{align*} Hence, \allowbreak from the definition of the generalized gradient $F^{\circ}$ and Clarke regularity of $F$,
    \begin{align}
        \begin{split}\label{LimsupFIneqDirDeiv}
            &\liminf_{k\in\overline{\mathcal{K}}, k\rightarrow \infty} \frac{\beta}{\alpha_{k}}\paren{ F\paren{v_{k}\oplus\frac{\alpha_{k}}{\beta} \overline{\Delta x^{\ast}_{k}}} - F\paren{v_{k}\oplus 0_{x^{\ast}}}}\\
            &\leq F^{\circ}\paren{v^{\ast}\oplus 0_{x^{\ast}} ; 0_{x^{\ast}}\oplus\overline{\Delta x^{\ast}}}\\
            &= F^{\prime}\paren{v^{\ast}\oplus 0_{x^{\ast}} ; 0_{x^{\ast}}\oplus\overline{\Delta x^{\ast}}}.
        \end{split}
    \end{align}
            Furthermore, by noting $\text{Exp}_{x^{\ast}}\paren{v^{\ast}}=\text{Exp}_{x^{\ast}}\paren{0_{x^{\ast}}}=x^{\ast}$, we have
    \begin{align}
        \begin{split}\label{genederiveqdirderiv}
            &F^{\prime}\paren{v^{\ast}\oplus 0_{x^{\ast}} ;0_{x^{\ast}}\oplus\overline{\Delta x^{\ast}}}\\
            &= \lim_{t\downarrow 0} \frac{P_{\bar{\rho}}\circ R_{\text{Exp}_{x^{\ast}}\paren{v^{\ast}}}\paren{\Pi_{x^{\ast}\rightarrow\text{Exp}_{x^{\ast}}\paren{v^{\ast}}} \sbra{0_{x^{\ast}} + t \overline{\Delta x^{\ast}}}} - P_{\bar{\rho}}\circ R_{\text{Exp}_{x^{\ast}}\paren{v^{\ast}}}\paren{\Pi_{x^{\ast}\rightarrow\text{Exp}_{x^{\ast}}\paren{v^{\ast}}}\sbra{0_{x^{\ast}}}}}{t}\\
            &=\lim_{t\downarrow 0}\frac{ P_{\bar{\rho}}\circ R_{x^{\ast}}\paren{\Pi_{x^{\ast} \rightarrow x^{\ast}} \sbra{t \overline{\Delta x^{\ast}}}} - P_{\bar{\rho}}\circ R_{x^{\ast}}\paren{\Pi_{x^{\ast} \rightarrow x^{\ast}}\sbra{0_{x^{\ast}}}}}{t}\\
            &=\lim_{t\downarrow 0}\frac{ P_{\bar{\rho}}\circ R_{x^{\ast}}\paren{t \overline{\Delta x^{\ast}}} - P_{\bar{\rho}}\circ R_{x^{\ast}}\paren{0_{x^{\ast}}}}{t}\\
            &=\paren{P_{\bar{\rho}}\circ R_{x^{\ast}}}^{\prime}\paren{0_{x^{\ast}};\overline{\Delta x^{\ast}}}.
        \end{split}
    \end{align}
    Finally, (\fact\ref{F3:liminfFIneqDirPrho}) is obtained by combining \eqref{LimsupFIneqDirDeiv} and \eqref{genederiveqdirderiv}. 

    \textit{Proof of {\rm (\fact\ref{F1:BDeltaxAstEqLimSupBDeltaxk})}.}
    Since $\metr[x^{\ast}]{\overline{B_{k}}\sbra{\overline{\Delta x_{k}^{\ast}}}}{\overline{\Delta x_{k}^{\ast}}} =\metr[x_{k}]{B_{k}\sbra{\Delta x_{k}^{\ast}}}{\Delta x_{k}^{\ast}}$ holds by Lemma~\ref{existDxBastwithproperties}\eqref{statement:BkProdEqBkLine} with $\xi=\zeta=\Delta x_{k}^{\ast}$, equation~\eqref{al:okuno3} yields 
    \begin{align*}
        \metr[]{\overline{B^{\ast}}\sbra{\overline{\Delta x^{\ast}}}}{\overline{\Delta x^{\ast}}} = \limsup_{k\in\mathcal{K}, k\rightarrow \infty} \metr[]{B_{k}\sbra{\Delta x_{k}^{\ast}}}{\Delta x_{k}^{\ast}},
    \end{align*}
    which is nothing but the equation in (\fact\ref{F1:BDeltaxAstEqLimSupBDeltaxk}). The whole proof is now complete.
\end{proof}

\section{Riemannian Newton method}\label{SubsecAppen:RiemNewton}
\begin{algorithm}[t]
\caption{Riemannian Newton method for real-valued functions}
\label{RiemNewton}
\begin{algorithmic}      
\REQUIRE Riemannian manifold $\mani$, Riemannian metric$\metr[]{\cdot}{\cdot}$, three times continuously differentiable functions $\theta:\mathcal{M}\rightarrow\eucli$, retraction $R: \tanspc[]\mani\rightarrow\mani$.
\ENSURE Initial iterate $x_{0} \in \mani$. \quad \textbf{Output:} $x^{\ast}\in\mani$ such that $\text{grad}\,\theta\paren{x^{\ast}}=0$.
\FOR  {$k=0,1,\ldots$}
\STATE Solve the Newton equation 
    \begin{align}\label{RiemNeweq}
        \text{Hess}\theta\paren{x_{k}}\sbra{\zeta_{k}} = - \text{grad}\theta\paren{x_{k}}
    \end{align}
for the unknown $\zeta_{k}\in\tanspc[x_{k}]\mani$, where $\text{Hess}\theta\paren{x_{k}}\sbra{\zeta_{k}}=\nabla_{\zeta_{x_{k}}}\text{grad}\theta$;
\STATE Set $x_{k+1} = R_{x_{k}}\paren{\zeta_{k}}$;
\ENDFOR
\end{algorithmic}
\end{algorithm}
We briefly review the Riemannian Newton method from Absil et al.~\cite[Chapter 6]{Absiletal08OptBook}. This is an algorithm for finding a critical point of a three times continuously differentiable function $\theta:\mani\rightarrow\eucli$, i.e., $x\in\mani$ such that grad$\theta\paren{x}=0$. The search direction $\zeta_{k}\in\tanspc[x_{k}]\mani$ is obtained by solving the Newton equation \eqref{RiemNeweq}
and the next iterate is determined by means of a retraction along $\zeta_k$. Here, the step length is fixed to $1$. We formalize this method as Algorithm~\ref{RiemNewton}.
The following theorem holds for Algorithm~\ref{RiemNewton}. Note that Theorem~\ref{RiemNewQuadConv} was originally established for the geometric Newton method, which includes the Riemannian Newton method as an instance~\cite{Absiletal08OptBook}.
\begin{theorem} {\upshape(\cite[Theorem 6.3.2]{Absiletal08OptBook})}\label{RiemNewQuadConv}
Under the requirements and notation of Algorithm~\ref{RiemNewton}, assume that there exists $x^{\ast}\in\mani$ such that $\text{{\upshape grad}}\theta\paren{x^{\ast}} = 0$ and $\text{{\upshape Hess}}\theta\paren{x^{\ast}}^{-1}$ exists. Then, there exists a neighborhood $\mathcal{U}$ of $x^{\ast}$ in $\mani$ such that, for all $x_{0}\in\mathcal{U}$, Algorithm~\ref{RiemNewton} generates an infinite sequence $\brc{x_{k}}_{k=0,1\ldots}$ converging quadratically to $x^{\ast}$.
\end{theorem}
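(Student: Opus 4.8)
The plan is to trivialize the iteration inside a single coordinate chart around $x^{\ast}$ and to show that, in those coordinates, one Newton step coincides with a classical Euclidean Newton step for $\widehat{\theta}$ up to a perturbation of quadratic order, after which the textbook local estimate applies. First I would fix a chart $\paren{\mathcal{U},\varphi}$ with $x^{\ast}\in\mathcal{U}$ and set $\widehat{x}\coloneqq\varphi\paren{x}$, $\widehat{x^{\ast}}\coloneqq\varphi\paren{x^{\ast}}$. Writing $\widehat{H}\paren{\widehat{x}}\coloneqq\text{D}^{2}\widehat{\theta}\paren{\widehat{x}}-\widehat{\Gamma}_{\widehat{x}}\sbra{\text{D}\widehat{\theta}\paren{\widehat{x}}}$ for the symmetric matrix appearing in \eqref{Hesscoordmetr}, the Newton equation \eqref{RiemNeweq} together with \eqref{def:grad} becomes, after pairing with an arbitrary tangent vector, the linear system $\widehat{H}\paren{\widehat{x}_{k}}\,\widehat{\zeta}_{k}=-\text{D}\widehat{\theta}\paren{\widehat{x}_{k}}$, so the coordinate search direction is $\widehat{\zeta}\paren{\widehat{x}}=-\widehat{H}\paren{\widehat{x}}^{-1}\text{D}\widehat{\theta}\paren{\widehat{x}}$ wherever $\widehat{H}$ is invertible. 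Since $\text{grad}\,\theta\paren{x^{\ast}}=0$ gives $\text{D}\widehat{\theta}\paren{\widehat{x^{\ast}}}=0$, the Christoffel correction $\widehat{\Gamma}_{\widehat{x^{\ast}}}\sbra{\text{D}\widehat{\theta}\paren{\widehat{x^{\ast}}}}$ vanishes, whence $\widehat{H}\paren{\widehat{x^{\ast}}}=\text{D}^{2}\widehat{\theta}\paren{\widehat{x^{\ast}}}$, which is invertible because $\text{Hess}\,\theta\paren{x^{\ast}}$ is. By continuity I would shrink $\mathcal{U}$ to a compact neighborhood on which $\widehat{H}\paren{\widehat{x}}$ stays invertible with uniformly bounded inverse, so that $\widehat{\zeta}\paren{\cdot}$ and the coordinate one-step map $\widehat{N}\paren{\widehat{x}}\coloneqq\widehat{R}\paren{\widehat{x},\widehat{\zeta}\paren{\widehat{x}}}$ are well defined there, where $\widehat{R}$ is the coordinate representative of the retraction.

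The heart of the argument is the comparison $\widehat{N}\paren{\widehat{x}}=\widehat{x}-\text{D}^{2}\widehat{\theta}\paren{\widehat{x}}^{-1}\text{D}\widehat{\theta}\paren{\widehat{x}}+O\paren{\norm{\widehat{x}-\widehat{x^{\ast}}}^{2}}$, i.e. the Newton step agrees to quadratic order with the ordinary Euclidean Newton step for $\widehat{\theta}$. Two manifold-specific discrepancies must be controlled. First, $\widehat{\zeta}$ is built from $\widehat{H}$ rather than from $\text{D}^{2}\widehat{\theta}$; since the two differ only by $\widehat{\Gamma}_{\widehat{x}}\sbra{\text{D}\widehat{\theta}\paren{\widehat{x}}}=O\paren{\norm{\text{D}\widehat{\theta}\paren{\widehat{x}}}}=O\paren{\norm{\widehat{x}-\widehat{x^{\ast}}}}$ (using $\text{D}\widehat{\theta}\paren{\widehat{x^{\ast}}}=0$) and $\widehat{\zeta}=O\paren{\norm{\widehat{x}-\widehat{x^{\ast}}}}$ as well, the resulting change in the direction is $O\paren{\norm{\widehat{x}-\widehat{x^{\ast}}}^{2}}$. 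Second, the update uses the retraction instead of vector addition; smoothness of $\widehat{R}$ with the normalizations $\widehat{R}\paren{\widehat{x},0}=\widehat{x}$ and $\partial_{\widehat{v}}\widehat{R}\paren{\widehat{x},0}=I$ coming from \eqref{def:retr} yields a Taylor expansion $\widehat{R}\paren{\widehat{x},\widehat{v}}=\widehat{x}+\widehat{v}+O\paren{\norm{\widehat{v}}^{2}}$ on the compact neighborhood, so evaluating at $\widehat{v}=\widehat{\zeta}\paren{\widehat{x}}=O\paren{\norm{\widehat{x}-\widehat{x^{\ast}}}}$ again contributes only $O\paren{\norm{\widehat{x}-\widehat{x^{\ast}}}^{2}}$.

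Given this comparison, I would invoke the classical local analysis of Newton's method: because $\theta$ is three times continuously differentiable, $\text{D}^{2}\widehat{\theta}$ is locally Lipschitz, and with $\text{D}\widehat{\theta}\paren{\widehat{x^{\ast}}}=0$ and $\text{D}^{2}\widehat{\theta}\paren{\widehat{x^{\ast}}}$ invertible the Euclidean Newton map satisfies $\norm{\widehat{x}-\text{D}^{2}\widehat{\theta}\paren{\widehat{x}}^{-1}\text{D}\widehat{\theta}\paren{\widehat{x}}-\widehat{x^{\ast}}}\leq c_{0}\norm{\widehat{x}-\widehat{x^{\ast}}}^{2}$ near $\widehat{x^{\ast}}$; see, e.g., \cite{NocedalWright06NumOpt}. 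Adding the $O\paren{\norm{\widehat{x}-\widehat{x^{\ast}}}^{2}}$ perturbation from the previous paragraph produces a constant $c\geq0$ and a radius $r>0$ with $\norm{\widehat{N}\paren{\widehat{x}}-\widehat{x^{\ast}}}\leq c\norm{\widehat{x}-\widehat{x^{\ast}}}^{2}$ for $\norm{\widehat{x}-\widehat{x^{\ast}}}\leq r$. Shrinking $r$ so that $cr<1$ and defining $\mathcal{U}\coloneqq\varphi^{-1}\paren{\brc{\widehat{x}\relmiddle{|}\norm{\widehat{x}-\widehat{x^{\ast}}}\leq r}}$, any $x_{0}\in\mathcal{U}$ produces iterates that remain in $\mathcal{U}$ and whose coordinate errors decay quadratically; the local equivalence of the coordinate norm with the Riemannian distance then yields quadratic convergence of $\brc{x_{k}}$ to $x^{\ast}$ in the sense of \cite[Definition 4.5.2]{Absiletal08OptBook}.

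I expect the main obstacle to be the quadratic-order comparison of the second paragraph rather than the invocation of the Euclidean estimate. The delicate point is that both manifold-specific corrections are only $O\paren{\norm{\widehat{x}-\widehat{x^{\ast}}}^{2}}$ \emph{because} $\text{D}\widehat{\theta}$ vanishes at $\widehat{x^{\ast}}$ (killing the Christoffel term to leading order) and \emph{because} the retraction matches the identity to first order through \eqref{def:retr}; one must be careful to use only these normalizations and the mere continuity of $\text{D}^{3}\widehat{\theta}$, so as not to covertly assume more regularity than the stated $C^{3}$ hypothesis affords. Verifying that the uniform bounds on $\widehat{H}^{-1}$, on the Lipschitz modulus of $\text{D}^{2}\widehat{\theta}$, and on the retraction remainder all hold simultaneously on one fixed compact neighborhood is the bookkeeping that makes the single constant $c$ legitimate.
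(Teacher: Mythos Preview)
The paper does not prove this theorem; it is quoted verbatim from \cite[Theorem~6.3.2]{Absiletal08OptBook} as an external tool and invoked in the proof of Theorem~\ref{quadlocalconvthm}. There is therefore no ``paper's own proof'' to compare against.

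That said, your sketch is sound and is essentially the standard coordinate-chart argument used in \cite{Absiletal08OptBook}: pass to a chart, recognize that the coordinate Newton system is $\widehat{H}(\widehat{x})\widehat{\zeta}=-\text{D}\widehat{\theta}(\widehat{x})$ with $\widehat{H}$ as in \eqref{Hesscoordmetr}, use $\text{D}\widehat{\theta}(\widehat{x^{\ast}})=0$ to kill the Christoffel correction at the critical point and to make it an $O(\norm{\widehat{x}-\widehat{x^{\ast}}})$ perturbation nearby, use the first-order normalization \eqref{def:retr} of the retraction to reduce the update to $\widehat{x}+\widehat{\zeta}+O(\norm{\widehat{\zeta}}^{2})$, and then invoke the classical quadratic estimate for Euclidean Newton. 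The one place to be slightly more careful than you indicate is the retraction remainder: the expansion $\widehat{R}(\widehat{x},\widehat{v})=\widehat{x}+\widehat{v}+O(\norm{\widehat{v}}^{2})$ must hold \emph{uniformly in $\widehat{x}$} on your compact neighborhood, which requires smoothness of $R$ jointly in the base point and tangent vector (this is part of the definition of a retraction as a smooth map on $T\mani$), not merely smoothness of each $R_{x}$. With that uniformity made explicit, your bookkeeping yields the single constant $c$ you need.
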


\section{Further numerical experiments}\label{appx:suppexperiment}
Here, we report the further results of the additional numerical experiments. We consider the additional experiments on the nonnegative matrix low-rank completion problems introduced in Section~\ref{experiment}.
Then, we newly consider another problem, that is, a minimum balanced cut problem.

\subsection{Additional experiments on nonegative low-rank matrix completion}\label{sec:suppnonnegcompl}
In addition to the experiment on $\paren{q,s} = \paren{5,10}$ in the first experiment of Section~\ref{subsub:resultnonneglowrank}, we conduct these on $\paren{q,s} = \paren{4,8},\paren{6,12},$ and $\paren{7,14}$. 

Tables~\ref{RCNNResTab} and \ref{RCNNResTabPart2} show the spent time and the number of iterations until the residual of each algorithm reached $10^{-i}$ for $i=-1,0,1,\ldots$.
For example, RSQO spent 1.332 seconds until it obtained a solution with residual$=10^{-4}$ in $\paren{q,s} = \paren{4,8}$. 
Note that the case of $\paren{q,s} = \paren{5,10}$ in Table~\ref{RCNNResTab} is another expression of Figure~\ref{Fig:NonnegResTime} in Section~\ref{subsub:resultnonneglowrank}.
From the tables, we can see that RSQO tends to compute the solution more accurately than the others. Indeed, RSQO successfully solved the problems for $\paren{q,s}=\paren{4,8}, \paren{5,10}$ and $\paren{7,14}$, while the other Riemannian methods failed to find a solution with the same accuracy. 
As we see in Figure~\ref{Fig:NonnegResTime}, RSQO tends to steadily decrease the residual at first and then much more time is necessary to get more accurate solutions in other cases of RSQO of Tables~\ref{RCNNResTab} and \ref{RCNNResTabPart2}.

\begin{table}[t]
\caption{Residual vs. CPU time (sec.) for nonnegative low-rank matrix completion (1/2)}
\small
\centering
\begin{tabular}{S[table-format=2.3e-1]S[table-format=2.3e-1]S[table-format=2.3e-1]S[table-format=2.3e-1]S[table-format=2.3e-1]}
\multicolumn{5}{c}{$\paren{q,s} =  \paren{4,8}$}                                                                            \\ \hline
\multicolumn{1}{c|}{Residual}   & {RSQO}               & {RALM}               & {REPM (LQH)}         & {REPM (LSE)}         \\ \hline
\multicolumn{1}{c|}{$10$}       & 2.327E-04            & 0.000E+00            & 0.000E+00            & 0.000E+00            \\
\multicolumn{1}{c|}{$1$}        & 3.386E-01            & 1.721E+00            & 0.000E+00            & 1.483E+00            \\
\multicolumn{1}{c|}{$10^{-1}$}  & 5.733E-01            & 1.849E+00            & 1.407E-01            & 1.483E+00            \\
\multicolumn{1}{c|}{$10^{-2}$}  & 8.962E-01            & 1.960E+00            & 1.407E-01            & 3.629E+00            \\
\multicolumn{1}{c|}{$10^{-3}$}  & 1.272E+00            & 1.960E+00            & 3.748E-01            & 4.210E+00            \\
\multicolumn{1}{c|}{$10^{-4}$}  & 1.332E+00            & 1.960E+00            & 3.871E-01            & 4.678E+00            \\
\multicolumn{1}{c|}{$10^{-5}$}  & 1.392E+00            & 2.021E+00            & 4.611E-01            & 4.975E+00            \\
\multicolumn{1}{c|}{$10^{-6}$}  & 1.392E+00            & 2.133E+00            & 6.936E-01            & 5.395E+00            \\
\multicolumn{1}{c|}{$10^{-7}$}  & 1.121E+01            & 2.214E+00            & 7.614E-01            & {-}                  \\
\multicolumn{1}{c|}{$10^{-8}$}  & 1.371E+01            & {-}                  & {-}                  & {-}                  \\
\multicolumn{1}{c|}{$10^{-9}$}  & 3.422E+01            & {-}                  & {-}                  & {-}                  \\
\multicolumn{1}{c|}{$10^{-10}$} & 2.639E+02            & {-}                  & {-}                  & {-}                  \\ \hline
\multicolumn{1}{l}{}            & \multicolumn{1}{l}{} & \multicolumn{1}{l}{} & \multicolumn{1}{l}{} & \multicolumn{1}{l}{} \\
\multicolumn{5}{c}{$\paren{q,s} = \paren{5,10}$}                                                                            \\ \hline
\multicolumn{1}{c|}{Residual}   & {RSQO}               & {RALM}               & {REPM (LQH)}         & {REPM (LSE)}         \\ \hline
\multicolumn{1}{c|}{$10$}       & 1.394E-04            & 0.000E+00            & 0.000E+00            & 0.000E+00            \\
\multicolumn{1}{c|}{$1$}        & 3.975E-01            & 2.245E+00            & 1.207E+00            & 1.819E+00            \\
\multicolumn{1}{c|}{$10^{-1}$}  & 5.139E-01            & 5.463E+00            & 1.207E+00            & 1.819E+00            \\
\multicolumn{1}{c|}{$10^{-2}$}  & 1.057E+00            & 7.826E+00            & 1.207E+00            & 6.255E+00            \\
\multicolumn{1}{c|}{$10^{-3}$}  & 5.517E+00            & 9.969E+00            & 3.471E+00            & 9.275E+00            \\
\multicolumn{1}{c|}{$10^{-4}$}  & 5.725E+00            & 1.635E+01            & 1.218E+01            & 2.246E+01            \\
\multicolumn{1}{c|}{$10^{-5}$}  & 5.827E+00            & 2.789E+01            & 1.739E+01            & {-}                  \\
\multicolumn{1}{c|}{$10^{-6}$}  & 5.930E+00            & {-}                  & {-}                  & {-}                  \\
\multicolumn{1}{c|}{$10^{-7}$}  & 6.035E+00            & {-}                  & {-}                  & {-}                  \\
\multicolumn{1}{c|}{$10^{-8}$}  & 6.241E+00            & {-}                  & {-}                  & {-}                  \\
\multicolumn{1}{c|}{$10^{-9}$}  & 2.226E+01            & {-}                  & {-}                  & {-}                  \\
\multicolumn{1}{c|}{$10^{-10}$} & 1.927E+02            & {-}                  & {-}                  & {-}                  \\ \hline
\end{tabular}\par
    \bigskip
    \begin{center}
        ``-'' means that the algorithm cannot reach the residual.
    \end{center}
    \label{RCNNResTab}
\end{table}

\begin{table}[t]
\caption{Residual vs. CPU time (sec.) for nonnegative low-rank matrix completion (2/2)}
\small
\centering
\begin{tabular}{S[table-format=2.3e-1]S[table-format=2.3e-1]S[table-format=2.3e-1]S[table-format=2.3e-1]S[table-format=2.3e-1]}
\multicolumn{5}{c}{$\paren{q,s} =  \paren{6,12}$}                                                                           \\ \hline
\multicolumn{1}{c|}{Residual}   & {RSQO}               & {RALM}               & {REPM (LQH)}         & {REPM (LSE)}         \\ \hline
\multicolumn{1}{c|}{$10$}       & 1.501E-04            & 0.000E+00            & 0.000E+00            & 0.000E+00            \\
\multicolumn{1}{c|}{$1$}        & 3.746E-01            & 2.873E+00            & 0.000E+00            & 2.099E+00            \\
\multicolumn{1}{c|}{$10^{-1}$}  & 7.890E-01            & 6.185E+00            & 1.483E+00            & 2.099E+00            \\
\multicolumn{1}{c|}{$10^{-2}$}  & 1.587E+00            & 1.384E+01            & 9.138E+00            & {-}                  \\
\multicolumn{1}{c|}{$10^{-3}$}  & 2.196E+00            & 2.444E+01            & 4.196E+01            & {-}                  \\
\multicolumn{1}{c|}{$10^{-4}$}  & 3.138E+00            & 2.444E+01            & 1.792E+02            & {-}                  \\
\multicolumn{1}{c|}{$10^{-5}$}  & 1.429E+01            & 2.600E+01            & {-}                  & {-}                  \\
\multicolumn{1}{c|}{$10^{-6}$}  & 1.489E+02            & 2.626E+01            & {-}                  & {-}                  \\ \hline
\multicolumn{1}{l}{}            & \multicolumn{1}{l}{} & \multicolumn{1}{l}{} & \multicolumn{1}{l}{} & \multicolumn{1}{l}{} \\
\multicolumn{5}{c}{$\paren{q,s} =  \paren{7,14}$}                                                                           \\ \hline
\multicolumn{1}{c|}{Residual}   & {RSQO}               & {RALM}               & {REPM (LQH)}         & {REPM (LSE)}         \\ \hline
\multicolumn{1}{c|}{$10$}       & 1.462E-04            & 0.000E+00            & 0.000E+00            & 0.000E+00            \\
\multicolumn{1}{c|}{$1$}        & 4.558E-01            & 3.797E+00            & 1.172E+00            & 2.614E+00            \\
\multicolumn{1}{c|}{$10^{-1}$}  & 2.018E+00            & 9.133E+00            & 1.172E+00            & 5.070E+00            \\
\multicolumn{1}{c|}{$10^{-2}$}  & 3.031E+01            & 1.966E+01            & 1.172E+00            & 1.184E+01            \\
\multicolumn{1}{c|}{$10^{-3}$}  & 3.118E+01            & {-}                  & 1.583E+00            & 2.699E+01            \\
\multicolumn{1}{c|}{$10^{-4}$}  & 3.146E+01            & {-}                  & 2.758E+00            & 6.508E+01            \\
\multicolumn{1}{c|}{$10^{-5}$}  & 3.176E+01            & {-}                  & 3.298E+00            & {-}                  \\
\multicolumn{1}{c|}{$10^{-6}$}  & 3.233E+01            & {-}                  & 4.282E+00            & {-}                  \\
\multicolumn{1}{c|}{$10^{-7}$}  & 3.260E+01            & {-}                  & 6.315E+00            & {-}                  \\
\multicolumn{1}{c|}{$10^{-8}$}  & 3.315E+01            & {-}                  & {-}                  & {-}                  \\
\multicolumn{1}{c|}{$10^{-9}$}  & 3.372E+01            & {-}                  & {-}                  & {-}                  \\ \hline
\end{tabular}\par
    \bigskip
    \begin{center}
        ``-'' means that the algorithm cannot reach the residual.
    \end{center}
    \label{RCNNResTabPart2}
\end{table}

In addition, the average CPU time per step for RSQO to reach its most accurate solution drastically increased as the problem size did: 
in $\paren{q,s}= \paren{4,8}$, RSQO reached the most accurate solution with residual$=10^{-10}$ by the $4471$-st iteration.
Similarly, the number of the iterations for RSQO to reach the most accurate solutions were $1795, 766$, and $110$ in $\paren{q,s} = \paren{5,10}, \paren{6,12}$, and $\paren{7,14}$, respectively. Thus, the average time was $2.639\times 10^{2} \slash 4471 = 5.903 \times 10^{-4}$ seconds in $\paren{q,s} = \paren{4,8}$ and similarly $1.074 \times 10^{-1}, 1.944 \times 10^{-1}$ and $3.034 \times 10^{-1}$ seconds in $\paren{q,s} = \paren{5,10}, \paren{6,12}$ and $\paren{7,14}$, respectively. 
Here, the most expensive procedure of RSQO was to call \texttt{hessianmatrix} as well as that in Section~\ref{subsub:resultnonneglowrank}; 
for all cases, RSQO terminated its computations due to the excess of the maximal time and it took $4.639\times 10^{2}, 4.958 \times 10^{2}, 5.062\times 10^{2}$ and $5.137\times 10^{2}$ out of $600$ seconds to call \texttt{hessianmatrix} in $\paren{q,s}=\paren{4,8},\paren{5,10},\paren{6,12}$ and $\paren{7,14}$, respectively. Hence, using the Hessian of the Lagrangian seems to be particularly expensive as the problem size becomes large.

\subsection{Minimum balanced cut for graph bisection via relaxation}\label{sec:minbalcut}
\subsubsection{Problem setting}\label{subsec:probsetminbalcut}
The following problem setting was introduced by Liu and Boumal~\cite{LiuBoumal19Simple}. Let $L$ be a $q\times q$ matrix and $e$ be a $q$-dimensional vector whose entries are all ones. Define an oblique manifold by $\text{Oblique}\paren{q,s} \coloneqq \brc{X\in\eucli[q\times s] \relmiddle{|} \text{diag}\paren{XX^{\top}} = e}$, where $\text{diag}\paren{\cdot}$ returns a vector consisting of the diagonal elements of the argument matrix. Then, the problem can be represented as
\begin{align}
    \begin{split}\label{BC}
    \min_{X\in\text{Oblique}\paren{q,s}} \quad -\frac{1}{4} \text{tr}\paren{X^{\top}LX} \quad
    \text{s.t.} \quad   X^{\top}e = 0.
    \end{split}
\end{align}

\textbf{Input.} We set $q=50$ and generate $L$ by following \cite{Liu19RALMcode} with a hyperparameter $density = 0.01$. We also set $s = 2$; that is, the number of variables equals $100$.

\subsubsection{Experimental environment}\label{subsec:expenvminbalcut}

In addition to RSQO, RALM, and REPMs, 
we solve the minimum balanced cut problem with \texttt{fmincon}, a solver for constrained nonlinear optimization in a Euclidean space. Since \eqref{BC} can be formulated as a Euclidean optimization problem having only equality constraints in $\eucli[q\times s]$, we opt to solve \eqref{BC} not with an interior-point method but rather with SQO in \texttt{fmincon}.

To measure the deviation of an iterate from the set of KKT points, we use residuals based on the KKT conditions \eqref{KKT} and the manifold constraints of the problems: in the minimum balanced cut problem, the residual for Riemannian methods is defined by
    \begin{align*}
        \sqrt{\norm{\text{grad}\,\mathcal{L}_{\lambda}\paren{X}}^{2} + \sum_{j\in\mathcal{E}} \abs{h_{j}\paren{X}}^{2} + \text{Manvio}\paren{X}^{2}},
    \end{align*}
where the first two terms in the square root originate from the KKT conditions \eqref{KKTLag} and \eqref{KKTeq}, and the last one means violation of the manifold constraints, defined by $\text{Manvio}\paren{X} \coloneqq \norm{\text{diag}\paren{XX^{\top}} - e}$. For \textbf{fmincon SQO}, we also define the residual based on the KKT conditions for the Euclidean form of \eqref{BC}, namely, $\min_{X\in\eucli[q\times s]} -\frac{1}{4} \text{tr}\paren{X^{\top}LX}$ s.t. $\text{diag}\paren{XX^{\top}} = e, X^{\top}e = 0$.

\subsubsection{Numerical result}\label{subsec:minbalcutresult}
We applied the algorithms to a randomly generated instance of the minimum balanced cut problem under the same settings as in the first experiment of the nonegative low-rank matrix completion problem except that the initial point was generated uniformly at random. 

Figure~\ref{Fig:BCResIter} shows the residual of the algorithms for the first 7 seconds. RSQO successfully solved the instance with the highest accuracy of the residual around $10^{-13}$, while the accuracies of other solutions were at most $10^{-6}$ achieved by RALM. The residual of RSQO dramatically decreased around 1.5 seconds. This might be due to the fact that RSQO possesses a quadratic convergence property as a result of using the Hessian of the Lagrangian. In spite of that \textbf{fmincon SQO} and RSQO share the same SQO framework, \textbf{fmincon SQO} did not work at all, even when computing a solution with residual$=1$, as we can see in Figure~\ref{Fig:BCResIter}. There was almost no improvement in the residual after it reached a feasible solution of the problem. This fact may underscore an advantage of the Riemannian manifold approach. RSQO terminated its computation because of the excess of the maximal time, 600 seconds. In the computation of RSQO, it occupied more than $85\%$ of the whole running time to call the function \texttt{hessianmatrix} as well as in Sections~\ref{subsub:resultnonneglowrank} and Appendix~\ref{sec:suppnonnegcompl}. 
This means that constructing the Hessian of the Lagrangian is the most expensive.

\begin{figure}[ht]
    \begin{center}
        \includegraphics[width=0.7\linewidth]{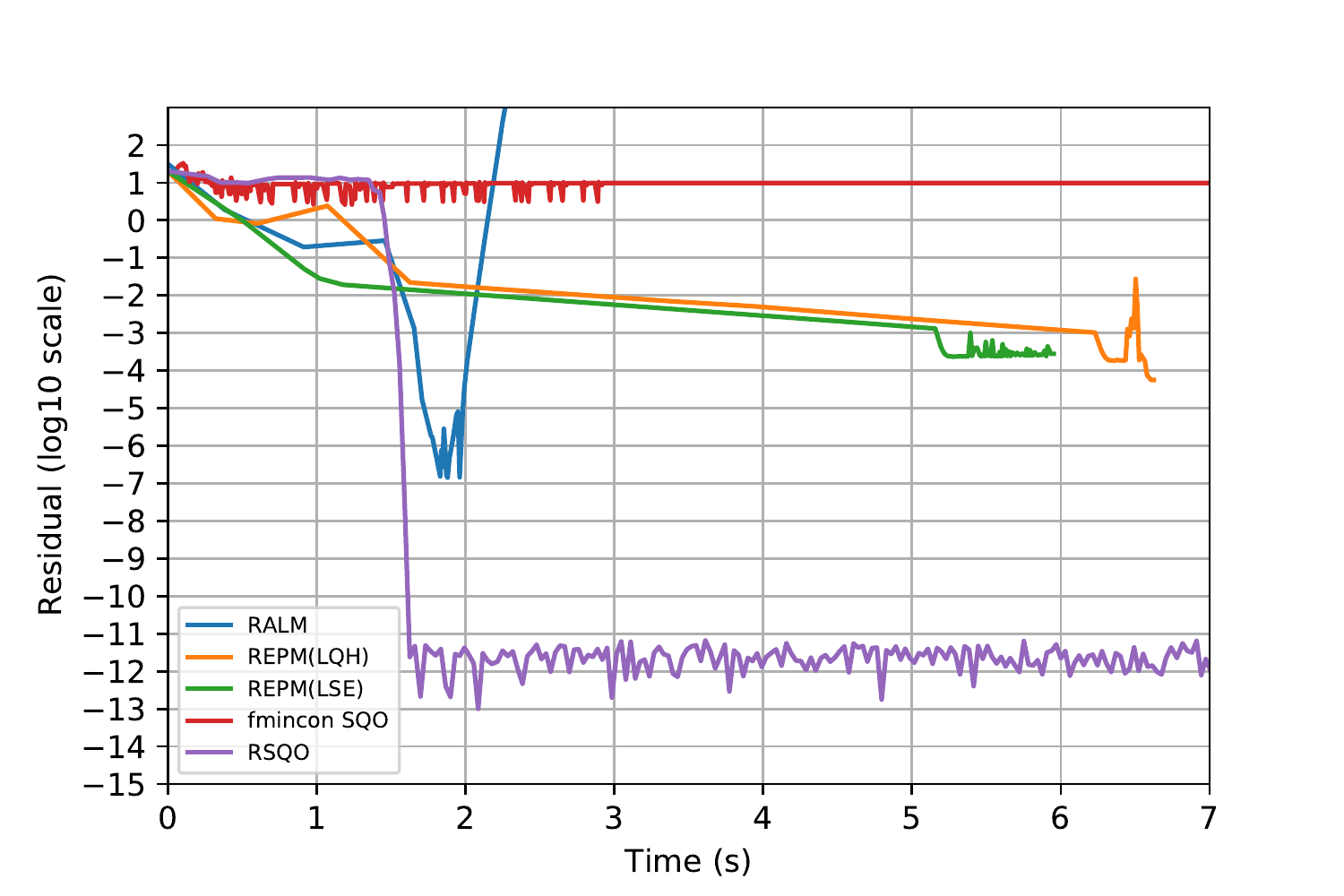}
    \end{center}
    \caption{Residual for the first 7 seconds for a minimum balanced cut problem. REPMs terminate by the 7 second because they do not update any parameter. RALM keeps increasing its residual after it excesses the residual$=10^{3}$. RSQO and \textbf{fmincon SQO} keep oscillating around the residuals$=10^{-12}$ and $10$ after 7 seconds, respectively.}
	\label{Fig:BCResIter}
\end{figure}

\newpage

\bibliographystyle{siamplain}
\bibliography{references.bib}
\end{document}